\theoremstyle{plain}
\newtheorem{theorem}{Theorem}[section]
\newtheorem{corollary}[theorem]{Corollary}
\newtheorem{lemma}[theorem]{Lemma}
\newtheorem{proposition}[theorem]{Proposition}
\newtheorem{definition}[theorem]{Definition}
\newtheorem*{definition*}{Definition}
\theoremstyle{remark}
\newtheorem{remark}[theorem]{Remark}
\newtheorem{example}[theorem]{Example}
\newtheorem*{claim*}{Claim}
\newtheorem*{remark*}{Remark}
\newtheorem*{example*}{Example}
\newtheorem*{notation*}{Notation}
\begin{document}
\title[Metric Measure Spaces with Variable Ricci Bounds]{Metric Measure Spaces with Variable Ricci Bounds and Couplings of Brownian Motions}
\author{Karl-Theodor Sturm}
 \address{
 University of Bonn\\
 Institute for Applied Mathematics\\
 Endenicher Allee 60,
 53115 Bonn\\
 Germany}
  \email{sturm@uni-bonn.de}

\maketitle

\def\N{{\mathbb N}}
\def\Z{{\mathbb Z}}
\def\R{{\mathbb R}}
\def\C{{\mathbb C}}
\def\D{{\mathsf D}}
\def\Q{{\mathbb Q}}
\def\QQ{{\mathbf Q}}
\def\PP{{\mathbf P}}
\def\EE{{\mathbb E}}
\def\DD{{\mathbb D}}

\def\Pr{{\mathcal P}}
\newcommand{\K}{\mathrm{k}}
\renewcommand{\k}{\mathrm{k}}
\newcommand{\KK}{\mathrm{K}}
\newcommand{\LL}{\mathrm{L}}
\newcommand{\RR}{\mathrm{R}}
\newcommand{\HH}{\mathrm{H}}
\newcommand{\Hess}{\mathrm{Hess}}

\newcommand{\Dom}{\mathit{Dom}}
\newcommand{\A}{\mathcal{A}}
\newcommand{\B}{\mathcal{B}}
\newcommand{\E}{\mathcal{E}}
\newcommand{\BE}{\mathrm{BE}}
\newcommand{\CD}{\mathrm{CD}}
\newcommand{\EVI}{\mathrm{EVI}}
\newcommand{\Ric}{\mathrm{Ric}}
\newcommand{\diam}{\mathrm{diam}}
\newcommand{\tr}{\mathrm{tr}}
\newcommand{\Ent}{\mathrm{Ent}}
\numberwithin{equation}{section}

\begin{abstract}
The goal of this paper is twofold: we study metric measure spaces $(X,d,m)$ with variable lower bounds for the Ricci curvature and
we study pathwise coupling of Brownian motions.
Given any lower semicontinuous function $\k:X\to\R$ we introduce the curvature-dimension condition $\CD(\k,\infty)$ which canonically extends the curvature-dimension condition $\CD(\KK,\infty)$ of Lott-Sturm-Villani for constant $\KK\in\R$. For infinitesimally Hilbertian spaces we prove \begin{itemize}
\item
its equivalence to an evolution-variation inequality $\EVI_\k$ which in \\ turn extends the $\EVI_\KK$-inequality of Ambrosio-Gigli-Savar\'e;
\item
its stability under convergence and its local-to-global property.
%\item
%that it follows from the Bakry-Emery condition $\BE(\k,\infty)$.
\end{itemize}
For metric measure spaces with uniform lower curvature bounds $\KK$ we prove that for each pair of initial distributions $\mu_1,\mu_2$ on $X$ there exists a coupling $B_t=(B_t^1,B_t^2)$, $t\ge0$, of two Brownian motions on $X$ with the given initial distributions such that a.s.
\begin{equation*}d\big(B^1_{s+t},B^2_{s+t}\big)\le e^{-\KK t/2}\cdot d\big(B_s^1,B_s^2\big)\qquad (\forall s,t\ge0).\end{equation*}
\end{abstract}

\section{Heat Flow on Metric Measure Spaces}

Throughout this paper,  a \emph{metric measure space} will be triple $(X,d,m)$ where
$(X,d)$ is a complete, separable metric space and $m$ is a measure on $X$ equipped with its  Borel $\sigma$-field $\B(X)$.
To simplify the presentation, we also assume in addition that $d$ is a length metric, that $m$ has  full topological support
and that the following weak integrability property holds
\begin{equation}\int_X e^{-C\cdot d^2(x',x)}\,dm(x)<\infty\label{gaussian}\end{equation}
(for some  $x'\in X$ and $C\in\R$).
$\Pr(X)$ will denote the space of Borel probability measures on $X$.
There are two canonical ways to define the \emph{heat flow} on a mms $(X,d,m)$
\begin{itemize}
\item either as the gradient flow for the \emph{energy} $\E$ in the Hilbert space $L^2(X,m)$
\item or as the gradient flow for the \emph{entropy} $\Ent$ in the Wasserstein  space $\Pr_2(X)$.
\end{itemize}
In the sequel, we will briefly present both approaches and we will illustrate
that in great generality both approaches will coincide.

\subsection{Eulerian Approach via Energy}

Given any function $f: X\to\R$ we define its pre-energy by
\begin{equation*}\E^0(f):=
\left\{\begin{array}{cc}
\int_X |Df|^2\,dm,&\mbox{ if $f$ is bounded and Lipschitz}\\
+\infty,&\mbox{ else}
\end{array}
\right.
\end{equation*}
and its \emph{energy} -- often called Cheeger energy -- as the relaxation or lower semicontinuous envelop of the pre-energy:
\begin{equation*}\E(f)=\liminf_{g\to f \mbox{ in } L^2}\E^0(g).\end{equation*}
Here $|Df|$ denotes the  metric slope (or local Lipschitz constant) of a Lip\-schitz function defined as
\begin{equation*}|Df(x)|:=\limsup_{y\to x}\frac{|f(y)-f(x)|}{d(y,x)}.\end{equation*}

The energy $\E$ is a lower semicontinuous convex functional on $L^2(X,m)$. It is  2-homogeneous but not necessarily quadratic.
Its domain $\Dom(\E):=\{f\in L^2(X,m):\, \E(f)<\infty\}$ is a dense linear subspace of $L^2(X,m)$ \cite{AGS-Calc}, Prop. 4.1.
For each $f\in\Dom(\E)$ there exists the minimal weak upper gradient $|Df|_w$,  a unique element of minimal norm in $\{g\in L^2(X,m): \ \exists f_n\in \mathrm{Lip}_b(X), f_n\to f, |Df_n|\rightharpoonup g\}$. It satisfies $\E(f)=\int |Df|_w^2\,dm$.

\begin{proposition}[\cite{AGS-Calc}, Chapter 4]
\begin{itemize}
\item
The gradient flow for the energy $\E$ in $L^2(X,m)$ defines uniquely a continuous semigroup $(T_t)_{t\ge0}$ of contractions in $L^2(X,m)$.
\item
For each $f\in L^2(X,m)$ the trajectory $t\mapsto T_tf$ is continuous in $t\in[0,\infty)$ and locally Lipschitz continuous in $t\in(0,\infty)$.
\item The heat flow is mass preserving: for each $f\in L^2(X,m)\cap L^1(X,m)$
\begin{equation*}\int T_t f\,dm=\int f\,dm.\end{equation*}
\item It is contracting in $L^p$: for each $p\in[1,\infty]$ and each  $f,g\in L^2(X,m)\cap L^p(X,m)$
\begin{equation*}\|T_tf-T_tg\|_p\le \|f-g\|_p.\end{equation*}
\end{itemize}
\end{proposition}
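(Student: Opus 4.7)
The first two bullets are a direct application of the abstract Brezis-Komura theory of gradient flows for convex lower semicontinuous functionals on a Hilbert space. Since $\E\colon L^2(X,m)\to[0,\infty]$ is convex, lower semicontinuous and has dense domain, its subdifferential $\partial\E$ is a maximal monotone multivalued operator and generates a unique strongly continuous semigroup of contractions $(T_t)$. For each $f\in L^2(X,m)$ the trajectory $t\mapsto T_tf$ is locally absolutely continuous on $(0,\infty)$, and the right derivative $\tfrac{d^+}{dt}T_tf=-A_tf$ exists for every $t>0$, where $A_tf$ is the element of minimal norm in $\partial\E(T_tf)$. The a priori estimate $t\,\|A_tf\|_2^2\le 2\,\E(f)$ together with monotonicity of $t\mapsto\|A_tf\|_2$ yields local Lipschitz continuity on $(0,\infty)$; continuity at $t=0$ follows from the strong continuity of the semigroup.

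For mass preservation, the key observation is that $\E$ is invariant under addition of constants, because $|D(g+c)|=|Dg|$ for bounded Lipschitz $g$ and this invariance passes to the relaxation. If $m$ were finite one would immediately conclude $\langle A_tf,\mathbf{1}\rangle_{L^2}=0$ and hence $\tfrac{d}{dt}\int T_tf\,dm=0$. In the general case, the integrability assumption \eqref{gaussian} guarantees the existence of Lipschitz cutoffs $\chi_R\uparrow 1$ with $\E(\chi_R)\to 0$; testing the defining equation of $T_t$ against $\chi_R$ and passing to the limit (dominated convergence being secured by the $L^2$-bounds on $A_tf$ and the integrability of $f$) yields the claim.

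The $L^p$-contraction for $1\le p\le\infty$ is the most delicate part. It rests on the Markov property of the Cheeger energy: for every $1$-Lipschitz map $\phi\colon\R\to\R$ with $\phi(0)=0$ one has $\E(\phi\circ f)\le\E(f)$, since $|D(\phi\circ g)|\le|Dg|$ at the level of bounded Lipschitz $g$ and this inequality descends to the relaxed functional. Applied to the resolvent $J_\tau=(I+\tau\partial\E)^{-1}$, one tests the Euler-Lagrange relation $(f-J_\tau f)/\tau\in\partial\E(J_\tau f)$ against sign- and truncation-type variations of $J_\tau f-J_\tau g$ to derive $\|J_\tau f-J_\tau g\|_p\le\|f-g\|_p$ first for $p=1$ and $p=\infty$, then for every intermediate $p$ by Riesz-Thorin interpolation. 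Iterating the implicit Euler scheme and passing to the limit transfers the bound to $T_t$.

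The hardest step is unquestionably this last one. Because $\E$ is $2$-homogeneous but not in general quadratic, the classical Beurling-Deny criteria for Markovian Dirichlet forms are unavailable, so the $L^1$ and $L^\infty$ contractions must be obtained nonlinearly at the level of the resolvent, combining convexity of $\E$ with its invariance under Lipschitz truncation.
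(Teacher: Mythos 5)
The paper gives no proof of this proposition --- it is quoted from \cite{AGS-Calc}, Chapter 4 --- so your proposal can only be measured against the argument there. Your treatment of the first two bullets is the standard Brezis--Komura theory and is fine. The other two bullets each contain a genuine gap. For mass preservation, you assert that the integrability condition (\ref{gaussian}) yields Lipschitz cutoffs $\chi_R\uparrow 1$ with $\E(\chi_R)\to 0$. This is false in general: condition (\ref{gaussian}) allows infinite measures with volume growth as fast as $e^{Cr^2}$, and already for Lebesgue measure on $\R^n$, $n\ge 3$, any cutoff equal to $1$ on $B_R$ and $0$ outside $B_{2R}$ has $\E(\chi_R)\gtrsim R^{n-2}\to\infty$; the existence of cutoffs with vanishing energy is essentially parabolicity of the space, which is not assumed. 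The actual proof in \cite{AGS-Calc} (Section 4.7) has to work harder: it combines the invariance of $\E$ under additive constants with the volume growth bound $m(B_r)\le Ae^{Br^2}$ and a priori Gaussian-tail estimates on the evolved measures to show that the boundary term $\int |D\chi_R|\,|Df_t|_w\,dm$ vanishes along a suitable sequence of radii, even though $\E(\chi_R)$ itself does not.

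For the $L^p$-contraction, your final step ``for every intermediate $p$ by Riesz--Thorin interpolation'' is not available: at this stage of the paper the energy is only $2$-homogeneous, not quadratic, so the resolvents $J_\tau$ and the semigroup $T_t$ are in general \emph{nonlinear}, and Riesz--Thorin is a theorem about linear operators. Your endpoint cases $p=1,\infty$, obtained from the Lipschitz-truncation invariance of $\E$ applied to variations of the resolvent equation, are the right idea and do correspond to what \cite{AGS-Calc} does; but the passage to intermediate $p$ must be carried out nonlinearly --- either by testing the resolvent relation directly with $|J_\tau f-J_\tau g|^{p-1}\,\mathrm{sign}(J_\tau f-J_\tau g)$ and using convexity of $\E$, or by invoking a nonlinear interpolation result of Crandall--Tartar/B\'enilan type (order preservation together with $L^1$- and $L^\infty$-contractivity implies $L^p$-contractivity for all $p$). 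With these two repairs your outline matches the cited proof; as written, both steps would fail.
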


In more details, the heat flow $f_t=T_tf_0$ is defined via the differential inclusion
\begin{equation*}\frac{d}{dt}f_t\in -\partial^- \E(f_t)\end{equation*}
where $\partial^-\E$ denotes the subdifferential of the convex function $\E$.
The Laplacian is defined for those $f\in L^2$ with
$\partial^- \E(f)\not=\emptyset$ as the the element of minimal $L^2$-norm within $-\partial^- \E(f)$.

In general, the heat semigroup -- and equivalently the Laplacian -- will neither be linear nor $m$-symmetric.
The heat flow is linear if and only if the energy is a quadratic functional. And in this case, the heat semigroup will also be $m$-symmetric.
Note that by this construction, the heat flow will be the solution to $\frac{d}{dt}f_t=\Delta f_t$ -- whereas in parts of the literature it is regarded as the solution  to $\frac{d}{dt}f_t=\frac12\Delta f_t$.

\begin{example}[\cite{OhSt1, OhSt2}]
Let $(M,F,m)$ be a smooth Finsler space. Then the associated heat flow on $M$ is linear if and only if the norm $F$ on each tangent space is Hilbertian, or in other words, if and only the manifold is Riemannian.

Even if the heat semigroup on Finsler spaces  is non-linear  it shares many properties with the linear heat semigroup for Dirichlet forms, e.g. integrated Gaussian estimates \`a la Davies, pointwise comparison \`a la Cheeger-Yau and gradient estimates \`a la Bakry-Emery-Ledoux.
\end{example}

\subsection{Lagrangian Approach via Entropy}

For $p\in[1,\infty)$ we define the
$L^p$-Wasserstein distance between $\mu_0,\mu_1\in\Pr(X)$ by
\begin{align*}
  W_p(\mu_0,\mu_1)~=~\left(\inf \int d(x,y)^p\, dq(x,y)\right)^{1/p}
\end{align*}
and
$W_\infty(\mu_0,\mu_1)~=~\inf\| d(.)\|_{L^\infty(X^2,q)}$
where in both cases the infimum is taken over all Borel probability measures $q$ on
$X\times X$ with marginals $\mu_0$ and $\mu_1$.
Note that
$W_\infty(\mu_0,\mu_1)=\lim_{p\to\infty} W_p(\mu_0,\mu_1)$.

The case $p=2$ will be of particular interest for us.
We denote by $\Pr_2(X)$ the $L^2$-Wasserstein space over
$(X,d)$, i.e. the set of all Borel probability measures $\mu$
satisfying $\int_X d(x_0,x)^2\mu(d x)<\infty$
for some, hence any, $x_0\in X$.

Given a measure $\mu\in\Pr_2(X)$ we define its \emph{relative entropy} or \emph{Boltzmann entropy} by
\begin{align*}
  \Ent(\mu)~:=~\int \rho\log\rho \, dm\;,
\end{align*}
if $\mu=\rho m$ is absolutely continuous w.r.t. $m$ and
$(\rho\log\rho)_+$ is integrable. Otherwise we set $\Ent(\mu)=+\infty$.

\begin{definition}
Given a number $\KK\in\R$  we say that $(X,d,m)$ satisfies the \emph{curvature-dimension condition} $\CD(\KK,\infty)$ if the Boltzmann entropy
is $\KK$-convex on the $L^2$-Wasserstein space $(\Pr_2(X), W_2)$.
\end{definition}
Here a function $S$ on a metric space $(Y,d_Y)$ is called $\KK$-convex if every pair of points $y_0,y_1\in Y$ can be joined by a (minimizing, constant speed) geodesic $\big(y_t\big)_{0\le t\le1}$ in $Y$ such
\begin{equation*}S(y_t)\le (1-t)\,S(y_0)+t\, S(y_1)-\frac \KK 2 t(1-t)\, d_Y(y_0,y_1)^2\end{equation*}
for all $t\in [0,1]$.
(The latter can equivalently be expressed by the fact that the function $u(t)=S(y_t)$ is upper semicontinuous in $t\in[0,1]$, continuous in $(0,1)$ and satisfies
$u''\ge \KK  |\dot y|^2$
weakly in $(0,1)$.)
In the case $\KK=0$ it is the classical convexity.
In the general case, $\KK$-convexity gives a precise meaning for weak solutions to the differential inequality
$D^2 S\ge K$
on geodesic spaces.

\begin{proposition}[\cite{Stu1}, \cite{LV}, \cite{AGS-BE}]
The  curvature-dimension condition $\CD(\KK,\infty)$ has important stability and transformation properties:
\begin{itemize}
\item it is preserved under convergence of the underlying mms (with respect to mGH convergence or $\DD$-convergence as well as with respect to the `pointed versions' of these convergence concepts);
\item it has the local-to-global property (provided the space is non-branching);
\item it has the tensorization property (provided all the spaces are non-branching).
%\item it implies the $\CD(\KK+\lambda,\infty)$ condition for the mms $(X,d,e^{-V}m)$ if the weight $V$ is $\lambda$-convex.
\end{itemize}
\end{proposition}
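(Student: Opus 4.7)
The plan is to treat the three properties separately, each via the $K$-convexity formulation of $\Ent$ on $(\Pr_2(X), W_2)$. For \emph{stability}, I would realize a sequence $(X_n, d_n, m_n) \to (X, d, m)$ by isometric embeddings into a common Polish space $(Z, d_Z)$ in which $m_n \to m$ weakly on bounded sets. Given $\mu_0, \mu_1 \in \Pr_2(X)$, one constructs approximations $\mu_i^n \in \Pr_2(X_n)$ such that $W_2^Z(\mu_i^n, \mu_i) \to 0$ and $\Ent_{m_n}(\mu_i^n) \to \Ent_m(\mu_i)$, via a standard discretization-plus-regularization of bounded densities. Applying $\CD(\KK,\infty)$ on each $X_n$ yields geodesics $(\mu_t^n)$ with the convexity inequality. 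Tightness in $Z$ (with uniformity of second moments coming from the Wasserstein bounds) provides a weak subsequential limit $(\mu_t)$ which is a $W_2$-geodesic in $\Pr_2(X)$, and joint lower semicontinuity of $\mu \mapsto \Ent_{m_n}(\mu)$ along $(\mu_n, m_n) \to (\mu, m)$ then transfers the inequality to the limit. The delicate ingredient is precisely this joint lower semicontinuity, which in turn dictates the precise form of mGH or $\DD$-convergence one is allowed to use.

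For the \emph{local-to-global} property, the non-branching hypothesis is essential. Given $\mu_0, \mu_1$, I would lift an optimal coupling to a dynamical plan $\Pi \in \Pr(\mathrm{Geo}(X))$ and consider $\mu_t = (e_t)_\# \Pi$. Covering the support of $\Pi$ by countably many sets on which local $\CD(\KK,\infty)$ holds and refining a partition $0 = t_0 < \dots < t_N = 1$ so that each sub-geodesic stays in one such set, I would apply the local $K$-convexity inequality on each segment $[t_{k-1}, t_k]$ and telescope the resulting inequalities into the global one. The non-branching hypothesis guarantees that these local geodesics coincide with the restriction of the global one, so no measure-theoretic ambiguity arises at the junctions. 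The main obstacle is ensuring finiteness of $\Ent(\mu_{t_k})$ at intermediate times so that the local inequalities are actually applicable; this is handled by first approximating $\mu_0, \mu_1$ by measures of bounded density and then passing to the limit by lower semicontinuity.

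For \emph{tensorization}, the key input is the factorization of $W_2$-geodesics on the product $X_1 \times X_2$ equipped with $d^2 = d_1^2 + d_2^2$ and $m = m_1 \otimes m_2$: every $W_2$-geodesic $(\mu_t)$ arises from a dynamical coupling whose one-dimensional projections onto each $X_i$ are themselves optimal, and under the non-branching assumption the disintegration $\mu_t = \mu_t^1 \otimes \mu_{t, x_1}^2$ produces a geodesic $(\mu_t^1)$ in $\Pr_2(X_1)$ together with fiberwise geodesics in $\Pr_2(X_2)$. The additivity identity
\begin{equation*}
\Ent_m(\mu_t) = \Ent_{m_1}(\mu_t^1) + \int \Ent_{m_2}(\mu_{t,x_1}^2) \, d\mu_t^1(x_1)
\end{equation*}
then combines the two factorwise $K$-convexity inequalities to produce global $K$-convexity of $\Ent_m$ along $(\mu_t)$. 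The principal technical hurdle, once more relying on non-branching, is the rigorous justification of this product decomposition and the measurability and integrability of the fiber contribution.
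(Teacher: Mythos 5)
The paper states this proposition without proof, citing \cite{Stu1}, \cite{LV} and \cite{AGS-BE}, so there is no in-paper argument to compare against; your sketch is a reasonable reconstruction of the proofs in those references. The stability argument (embedding into a common space, transferring the measures with entropy control, tightness of the intermediate measures, lower semicontinuity of the entropy in both of its arguments) and the tensorization argument (disintegration of the product geodesic plus the additivity identity for the entropy, with non-branching used to make the projected and fiberwise curves genuine geodesics) are essentially the standard ones. One remark on stability: in \cite{Stu1} the approximations $\mu_i^n$ are obtained by pushing $\mu_i$ through the disintegration kernel of a $W_2$-optimal coupling of $m$ and $m_n$, which gives the one-sided bound $\Ent_{m_n}(\mu_i^n)\le\Ent_m(\mu_i)$ by Jensen's inequality; this is easier to arrange than convergence of the entropies and is all that is needed.

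The one genuine gap is in your local-to-global step. Applying the local $\KK$-convexity inequality on the cells of a single partition $0=t_0<\dots<t_N=1$ and ``telescoping'' does not yield global $\KK$-convexity: a function that is convex on each closed cell of a fixed partition may still have a concave kink at a junction point, and the correction terms $\frac{\KK}{2}\tau(1-\tau)W_2^2(\mu_{t_{k-1}},\mu_{t_k})$ do not sum to the global one. What the local hypothesis actually provides --- after non-branching is used to identify the local connecting geodesics with restrictions of the global one --- is the local convexity inequality on \emph{overlapping} subintervals around every interior time, i.e.\ the distributional differential inequality $u''\ge \KK\,|\dot\mu|^2$ for $u(t)=\Ent(\mu_t)$ on all of $(0,1)$, and it is this inequality (not the summed integrated ones) that one integrates back to the global statement. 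This is precisely how \cite{Stu1} organizes the argument, and also how the present paper's own local-to-global theorem for variable $\k$ proceeds, via the pathwise inequality $\frac{\partial^2}{\partial t^2}\log\rho_t(\gamma_t)\ge \k(\gamma_t)|\dot\gamma|^2$ weakly in $t$. Your treatment of the finiteness of $\Ent(\mu_{t_k})$ at intermediate times by approximation and lower semicontinuity is the right additional care once this is fixed.
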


\begin{proposition}[\cite{LV}, \cite{AGS-BE}]
The  curvature-dimension condition $\CD(\KK,\infty)$ for  $\KK>0$ implies various functional inequalities,
each of them with sharp constants,
\begin{itemize}
\item spectral gap estimate
\item Talagrand inequality
\item logarithmic Sobolev inequality.
\end{itemize}
\end{proposition}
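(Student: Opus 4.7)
My plan is to derive the three inequalities from the $\KK$-convexity of $\Ent$ on $(\Pr_2(X),W_2)$ in the Otto--Villani--von~Renesse--Sturm fashion, in the order Talagrand $\Rightarrow$ LSI $\Rightarrow$ spectral gap. A preliminary observation: $\CD(\KK,\infty)$ with $\KK>0$ forces $\Ent$ to attain its minimum on $\Pr_2(X)$, which in turn forces $m(X)<\infty$; after normalizing to $m(X)=1$, Jensen's inequality gives $\Ent(\mu)\ge 0$ with equality iff $\mu=m$, so $m$ is the unique minimizer of $\Ent$ with value $0$.

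For the Talagrand inequality, fix $\mu$ with $\Ent(\mu)<\infty$, take a $W_2$-geodesic $(\mu_t)_{t\in[0,1]}$ from $m$ to $\mu$ and apply $\KK$-convexity together with $\Ent(m)=0$ and $\Ent(\mu_t)\ge 0$:
\[ 0 \le t\,\Ent(\mu) - \tfrac{\KK}{2}\, t(1-t)\, W_2^2(m,\mu). \]
Dividing by $t$ and letting $t\to 0^+$ yields $W_2^2(m,\mu)\le (2/\KK)\,\Ent(\mu)$ with sharp constant. For the LSI I would use the abstract slope lemma: any nonnegative $\KK$-convex function $F$ on a geodesic space with $F(y^\ast)=0$ satisfies $|\nabla^- F|^2\ge 2\KK\, F$; writing out the $\KK$-convexity inequality between $y$ and $y^\ast$, dividing by $d(y,\mu_t)=t\,d(y,y^\ast)$ and letting $t\to 0$ gives $|\nabla^- F|(y)\ge F(y)/d(y,y^\ast) + (\KK/2)\,d(y,y^\ast)$, and an AM--GM optimization in $d(y,y^\ast)$ finishes. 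Specialized to $F=\Ent$, and combined with the Ambrosio--Gigli--Savar\'e identification of the descending slope with the Fisher information $I(\mu)=\int \rho^{-1}|D\rho|_w^2\,dm$ (for $\mu=\rho m$), this delivers $\Ent(\mu)\le (1/2\KK)\,I(\mu)$.

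For the spectral gap I would linearize the LSI: plugging $\mu_\epsilon=(1+\epsilon g)m$ with $g$ bounded and $\int g\,dm=0$, a direct Taylor expansion gives
\[ \Ent(\mu_\epsilon)=\tfrac{\epsilon^2}{2}\int g^2\,dm+o(\epsilon^2), \qquad I(\mu_\epsilon)=\epsilon^2\int |Dg|_w^2\,dm+o(\epsilon^2), \]
so the LSI reduces to the Poincar\'e inequality $\KK\int g^2\,dm \le \int |Dg|_w^2\,dm = \E(g)$, i.e.\ the spectral gap $\lambda_1\ge\KK$. Sharpness in all three inequalities is realized on Gaussian space $(\R^n,\gamma_n)$ at $\KK=1$, the extremals being translates of $\gamma_n$ for Talagrand and LSI and linear functions for Poincar\'e.

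The main obstacle is the slope identification $|\nabla^-\Ent|^2 = I$ outside the smooth Riemannian setting. This is the deep content of \cite{AGS-Calc}, resting on the equivalence between the Wasserstein gradient flow of $\Ent$ and the $L^2$ gradient flow of $\E$; once it is in place the three inequalities reduce to elementary manipulations of the $\KK$-convexity inequality. In the non-infinitesimally-Hilbertian setting one must interpret ``$\E(g)$'' via the (possibly non-quadratic) Cheeger energy, but the essential structure of the argument goes through and the constants remain sharp.
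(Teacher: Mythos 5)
The paper does not prove this proposition at all: it is quoted with references to \cite{LV} and \cite{AGS-BE}, so there is no in-paper argument to compare against. Your proof is exactly the standard Otto--Villani/Lott--Villani scheme carried out in those references, and its main lines are correct: Talagrand by differentiating the $\KK$-convexity inequality along a geodesic ending at $m$; the abstract slope bound $|\nabla^-\Ent|^2\ge 2\KK\,\Ent$ from $\KK$-convexity plus AM--GM; the identification of the descending slope with the Fisher information from \cite{AGS-Calc}; and Poincar\'e by linearizing the LSI at $m$. The Gaussian indeed saturates all three constants.

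Two steps are stated more quickly than they deserve. First, the implication ``$\Ent$ attains its minimum $\Rightarrow m(X)<\infty$'' is not immediate (when $m$ is infinite it is not clear a priori that a minimizer must be a normalized multiple of $m$); the standard derivation of $m(X)<\infty$ from $\CD(\KK,\infty)$, $\KK>0$, uses the quadratic growth forced by $\KK$-convexity together with the integrability assumption (1.1), cf.\ \cite{Stu1}, \cite{LV}. Second, the half of the slope identification your LSI step actually needs is $|\nabla^-\Ent|^2(\mu)\le I(\mu)$, which is the genuinely nontrivial direction (entropy dissipation along the heat flow, equivalently the HWI-type estimate); you correctly flag the identification as the deep input, but it is worth being explicit that it is this inequality, not the reverse one, on which the argument rests.
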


\begin{proposition}[\cite{AGS-Calc}] Assume that the condition $\CD(\KK,\infty)$ holds true for some $\KK\in\R$.
\begin{itemize}
\item[\bf(i)]
For every $\mu\in\Pr_2(X)$ with $\Ent(\mu)<\infty$ there exists a unique gradient flow $(P_t\mu)_{t\ge0}$
for the Boltzmann entropy $\Ent$ in the $L^2$-Wasserstein space $(\Pr_2(X), W_2)$, starting in $\mu$.
\item[\bf(ii)] For each $\mu\in\Pr_2(X)$ with $\Ent(\mu)<\infty$ and with $f=\frac{d\mu}{dm}\in L^2(X,m)$ the gradient flow of the entropy and the previously defined gradient flow of the energy coincide:
    \begin{equation*}P_t(f\,m)=(T_tf)\,m.\end{equation*}
\end{itemize}
\end{proposition}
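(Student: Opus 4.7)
My plan is to prove (i) via the minimizing movement scheme for $\Ent$ in the Wasserstein space, with uniqueness coming from general $\KK$-convex gradient flow theory, and to prove (ii) by identifying $t\mapsto(T_tf)\,m$ as a curve of maximal slope for $\Ent$ through a Kuwada-type duality argument between the Eulerian and Lagrangian pictures.

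For (i), given $\mu\in\Pr_2(X)$ with $\Ent(\mu)<\infty$ and a time step $\tau>0$, I would set $\mu^\tau_0:=\mu$ and recursively pick a minimizer $\mu^\tau_{n+1}$ of
\[
\nu\longmapsto \Ent(\nu)+\frac{1}{2\tau}W_2^2(\nu,\mu^\tau_n).
\]
Existence of minimizers follows from lower semicontinuity of $\Ent$ and $W_2$ together with the lower bound on $\Ent$ supplied by the integrability assumption \eqref{gaussian}. Piecewise interpolation yields approximate curves $\mu^\tau_\cdot$; $\KK$-convexity along Wasserstein geodesics delivers the classical discrete energy and speed bounds, hence $W_2$-equicontinuity, and extracting a limit $P_t\mu$ as $\tau\downarrow 0$ produces a continuous curve satisfying the Energy Dissipation Equality
\[
\Ent(\mu)=\Ent(P_t\mu)+\frac{1}{2}\int_0^t|\dot P_r\mu|^2\,dr+\frac{1}{2}\int_0^t|\partial^-\Ent|^2(P_r\mu)\,dr.
\]
Uniqueness then follows from the general theory of $\KK$-convex gradient flows: $\KK$-convexity turns the descending slope $|\partial^-\Ent|$ into a strong upper gradient and upgrades the EDE into a contraction estimate that rules out two distinct solutions.

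For (ii), write $f_t:=T_tf$ and $\mu_t:=f_t\,m$. Mass preservation together with $L^p$-contractivity of $T_t$ shows $\mu_t\in\Pr_2(X)$, once one truncates to $\varepsilon\le f\le M$ and treats the general case by approximation. The chain rule for the $L^2$-gradient flow yields
\[
\frac{d}{dt}\Ent(\mu_t)=-\int\frac{|Df_t|_w^2}{f_t}\,dm.
\]
The critical step is Kuwada's lemma: $t\mapsto\mu_t$ is $W_2$-absolutely continuous with metric speed controlled by the Fisher information,
\[
|\dot\mu_t|^2\le\int\frac{|Df_t|_w^2}{f_t}\,dm,
\]
proved by Kantorovich duality applied to $1$-Lipschitz test functions $\varphi$ and Cauchy--Schwarz on $\frac{d}{dt}\int\varphi f_t\,dm$. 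Combined with the complementary bound $|\partial^-\Ent|^2(\rho m)\ge\int|D\rho|_w^2/\rho\,dm$, which is where $\CD(\KK,\infty)$ enters, the curve $(\mu_t)$ saturates the EDE displayed above, so it is a curve of maximal slope starting at $f\,m$; by the uniqueness from (i), $\mu_t=P_t(fm)$.

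The main obstacle is Kuwada's lemma, together with the companion identification of $|\partial^-\Ent|$ with the Fisher information, for these together form the only analytic bridge between the Eulerian $L^2$-description and the Lagrangian $W_2$-description. Existence of the JKO limit and uniqueness under $\KK$-convexity are by now standard general-purpose tools, and the chain rule for $\Ent$ along $T_t$ is routine; but equating the two flows genuinely requires controlling Wasserstein speeds by the Cheeger energy, which is the analytic heart of the argument.
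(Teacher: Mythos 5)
The paper offers no proof of this proposition; it is quoted from \cite{AGS-Calc}, so your proposal must be measured against the argument there. Your overall architecture --- minimizing movements for existence in (i), and for (ii) the chain rule for $\Ent$ along $T_t$, Kuwada's lemma, and the comparison of the descending slope with the Fisher information --- is exactly the architecture of the cited proof. But two of your key steps are wrong as stated.

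First, uniqueness in (i) does \emph{not} follow from ``$\KK$-convexity upgrades the EDE into a contraction estimate.'' Geodesic $\KK$-convexity of $\Ent$ yields a $W_2$-contraction (equivalently an $\EVI_\KK$-flow, for which uniqueness is indeed automatic) only in the infinitesimally Hilbertian setting; that is precisely the content of Theorem \ref{fund}, which appears only \emph{after} this proposition and under an additional standing assumption. The proposition is stated for general $\CD(\KK,\infty)$ spaces, where the heat flow may be non-linear and $W_2$-contraction genuinely fails (non-Riemannian Finsler manifolds, the very example given earlier in the paper). The uniqueness proof in \cite{AGS-Calc} is of a different nature: it exploits the \emph{strict} convexity of $\Ent$ along \emph{linear} interpolations $(1-\lambda)\mu^1+\lambda\mu^2$, together with the joint convexity of $W_2^2$ and of the Fisher information (hence of the squared slope), to show that the midpoint of two distinct curves of maximal slope would dissipate entropy faster than the energy dissipation equality permits. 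Second, in (ii) you invoke the bound $|\partial^-\Ent|^2(\rho m)\ge\int|D\rho|_w^2/\rho\,dm$ as ``where $\CD(\KK,\infty)$ enters,'' but this is the wrong inequality on both counts: the direction needed to close the energy dissipation inequality is $|\partial^-\Ent|^2(\rho m)\le\int|D\rho|_w^2/\rho\,dm$, since only then does $-\frac{d}{dt}\Ent(\mu_t)=\int|Df_t|_w^2/f_t\,dm$ dominate $\frac12|\dot\mu_t|^2+\frac12|\partial^-\Ent|^2(\mu_t)$; and it is this \emph{upper} bound on the slope --- obtained from the representation of the slope of a $\KK$-geodesically convex functional as a supremum of curvature-corrected difference quotients --- that actually uses $\CD(\KK,\infty)$, whereas the lower bound you wrote holds without it. With these two corrections your outline becomes the proof of \cite{AGS-Calc}.
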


\subsection{Wasserstein Contraction, Gradient Estimates, and Bochner's Formula}

>From now on, we will in addition always assume that the metric measure spaces $(X,d,m)$ under consideration will be \emph{``infinitesimally Hilbertian''} in the sense that the canonical energy (as introduced above) is \emph{quadratic}:
\begin{equation*}\E(u+v)+\E(u-v)=2\E(u)+2\E(v)\qquad (\forall u,v\in \Dom(\E)).\end{equation*}
To simplify the presentation, we also assume
that
every bounded function $f\in\Dom(\E)$ with $|Df|_w\le1$ admits a continuous representative
(``property C'').
Note that property (ii) in the subsequent theorem already implies that the energy is quadratic. Moreover, each of properties (i) and (ii) imply the above mentioned property C.

\begin{theorem}[\cite{AGS-BE}]\label{fund} For any mms $(X,d,m)$ as above, the following properties are equivalent
\begin{itemize}
\item[\bf(i)] Curvature-dimension condition $\CD(\KK,\infty)$;

\item[\bf(ii)] Evolution-variation inequality $\EVI_\KK$: for every $\mu_0\in\Pr_2(X)$ there exists a curve $(\mu_t)_{t>0}$ in $\Dom(\Ent)$ with $\lim_{t\to0}\mu_t=\mu_0$ such that $\forall\nu\in\Pr_2(X), \forall t>0$
    \begin{equation*}\frac{d^+}{dt}\frac12 W_2^2(\mu_t,\nu)+\frac\KK2 W^2_2(\mu_t,\nu)\le\Ent(\nu)-\Ent(\mu_t);\end{equation*}

\item[\bf(iii)] $L^2$-Wasserstein contraction: $\forall f_1,f_2\in L^2(X,m), \forall t\in\R_+$:
\begin{equation*}W_2\Big(T_t(f_1\,m), T_t(f_2\,m)\Big)\le e^{-Kt}\, W_2\Big((f_1\,m),(f_2\,m)\Big);\end{equation*}

\item[\bf(iv)] $L^2$-gradient estimate: $\forall u\in\Dom(\E)$ and $\forall t>0$
\begin{equation*}
|D T_tu|_w^2\le e^{-2\KK t}T_t(|Du|_w^2);
\end{equation*}

\item[\bf(v)]  Bochner's inequality or Bakry-Emery condition $\BE(\KK,\infty)$:
 $\forall u\in\Dom(\Delta)$ with $\Delta u\in\Dom(\E)$
and $\forall \phi\in\Dom({\Delta})\cap L^\infty(X,m)$ with $\phi\ge0$, ${\Delta}\phi\in L^\infty(X,m)$
\begin{equation}\label{bochner1}
\int \left(\frac12\Delta -\KK\right)\, \phi\cdot |Du|_w^2\,dm \ge\int \phi \cdot \langle\nabla u,\nabla\Delta u\rangle\,dm.
\end{equation}

\end{itemize}
\end{theorem}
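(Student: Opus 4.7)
The plan is to establish the equivalences by a cycle of implications: (ii)$\Rightarrow$(i) by general metric gradient flow theory, (i)$\Rightarrow$(ii) via the Ambrosio--Gigli--Savar\'e action estimate, (ii)$\Rightarrow$(iii) by a Gronwall comparison, (iii)$\Leftrightarrow$(iv) by Kuwada duality, (iv)$\Leftrightarrow$(v) by a Bakry--Emery calculation, and finally (iv)$\Rightarrow$(i) closing the loop. The decisive role of infinitesimal Hilbertianity is to secure $P_t(fm)=(T_tf)m$ from the previous proposition, providing the bridge between the Lagrangian statements (i)--(iii) and the Eulerian statements (iv)--(v).

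\textbf{The equivalence (i)$\Leftrightarrow$(ii).} For (ii)$\Rightarrow$(i) I would invoke the general principle (Daneri--Savar\'e) that existence of an $\EVI_\KK$-flow for a functional on a geodesic space forces $\KK$-convexity of the driving functional along geodesics: use as test measure $\nu$ a point on a Wasserstein geodesic joining $\mu_0$ and $\mu_1$, integrate the $\EVI_\KK$-inequality in time, and apply the rescaling trick turning the infinitesimal inequality into the integrated convexity one. The converse (i)$\Rightarrow$(ii) is the hard direction. The Wasserstein gradient flow $(P_t\mu)$ already exists by the earlier proposition; the task is to upgrade the $\KK$-geodesic convexity of $\Ent$ to the pointwise inequality. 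The heart is the action estimate: displacement interpolation $\sigma_s$ between $P_t\mu$ and a probe $\nu$ combined with quadraticity of $\E$ yields a sharp upper bound on $\tfrac{d^+}{dt}\tfrac12 W_2^2(P_t\mu,\nu)$ in terms of $\Ent(\nu)-\Ent(P_t\mu)$, and the $\tfrac{\KK}{2}W_2^2$-term comes from $\KK$-convexity of $\Ent$ along $\sigma_s$.

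\textbf{The chain (ii)$\Rightarrow$(iii)$\Leftrightarrow$(iv)$\Leftrightarrow$(v).} For (ii)$\Rightarrow$(iii), apply $\EVI_\KK$ twice with $(\mu,\nu)=(P_t(f_1m),P_t(f_2m))$ and the swapped pair, add, and use Gronwall on $W_2^2(P_t(f_1m),P_t(f_2m))$. For (iii)$\Leftrightarrow$(iv) I would invoke Kuwada's duality: via Kantorovich--Rubinstein and Hopf--Lax semigroups $Q_s$, the Wasserstein contraction of $T_t$ on measures is dually equivalent to a commutation-type estimate $T_t Q_s\phi\le Q_s T_t\phi$, which in turn is equivalent to the pointwise gradient estimate in (iv). For (iv)$\Leftrightarrow$(v) I would carry out the standard Bakry--Emery computation: set $\Phi(s):=e^{-2\KK s}T_s(|DT_{t-s}u|_w^2)$, so that (iv) is equivalent to $\Phi(t)\le\Phi(0)$; differentiating $\Phi$ in $s$ and integrating by parts against a test function $\phi$ as in (v) shows that monotonicity of $\Phi$ is equivalent to the Bochner inequality applied to $T_{t-s}u$. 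To close the cycle, Kuwada plus the heat flow identification give (iv)$\Rightarrow$(iii), and combining Wasserstein contraction with an Otto--Villani style HWI argument recovers $\KK$-convexity of the entropy, hence (i).

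\textbf{Main obstacles.} The genuinely difficult step is (i)$\Rightarrow$(ii): a $\KK$-convexity statement along some geodesic must be upgraded to a pointwise differential inequality against \emph{every} probe $\nu$, and this passage requires the full action estimate machinery, which crucially exploits quadraticity of $\E$ (it genuinely fails in the Finsler setting). A secondary difficulty lies in justifying the differentiations in (iv)$\Leftrightarrow$(v): one has to restrict to $u\in\Dom(\Delta)$ with $\Delta u\in\Dom(\E)$ and to $\phi$ with $\Delta\phi\in L^\infty$, argue by density that both formulations are insensitive to this restriction, and verify that the weak Bochner inequality is strong enough to reconstruct the pointwise gradient estimate.
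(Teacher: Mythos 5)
First, a point of order: the paper contains no proof of this theorem at all. It is imported wholesale from Ambrosio--Gigli--Savar\'e \cite{AGS-BE} (with the $\CD\Leftrightarrow\EVI$ part going back to \cite{AGS-Mms}), so there is no internal argument to compare yours against. The only hint the paper gives about the proof architecture is in Section 3.3, where the action estimate ``in the spirit of \cite{AGS-BE}, Theorem 4.16'' is explicitly singled out as \emph{the} key ingredient for passing from the Bakry--Emery condition back to the curvature-dimension condition and the evolution-variation inequality. Measured against that, your sketch has the right overall shape -- the implications (ii)$\Rightarrow$(i) via Daneri--Savar\'e, (ii)$\Rightarrow$(iii) via a doubled $\EVI$ plus Gronwall, (iii)$\Leftrightarrow$(iv) via Kuwada duality \cite{Kuw}, and (iv)$\Leftrightarrow$(v) via the interpolation $s\mapsto e^{-2\KK s}T_s\Gamma(T_{t-s}u)$ are all correctly identified and are exactly the standard arguments -- but it misplaces the genuinely hard step.

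The gap is in how you close the cycle. You propose to get from the Eulerian side back to (i) by combining Wasserstein contraction with ``an Otto--Villani style HWI argument.'' The HWI machinery produces functional inequalities (log-Sobolev from Talagrand and so on), not geodesic $\KK$-convexity of $\Ent$; in the smooth setting one can recover $\mathrm{Ric}\ge \KK$ from $W_2$-contraction by Taylor-expanding the heat kernel, but no such argument is available on a metric measure space, and this is precisely why \cite{AGS-BE} needed the action estimate. The actual closing implication is $\BE(\KK,\infty)\Rightarrow\EVI_\KK$: one differentiates $s\mapsto\int Q_s\phi\,d\rho_{s,t}$ along a regular curve joining $\nu$ to $P_t\mu$, controls the resulting kinetic term by the gradient estimate (iv), and obtains the $\EVI_\KK$ inequality directly; $\CD(\KK,\infty)$ then follows from (ii)$\Rightarrow$(i). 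By contrast, the direction (i)$\Rightarrow$(ii), where you install the action estimate, is handled in \cite{AGS-Mms} by identifying the entropy gradient flow with the heat flow and exploiting $\KK$-convexity along the (unique, by \cite{RaSt}) geodesic from $\mu_t$ to the probe $\nu$ -- a related but distinct computation. As written, your cycle leaves the implication (iv)/(v)$\Rightarrow$(i),(ii), which is the substance of the theorem and the part this paper generalizes to variable $\k$ in Section 3.3, without a workable proof.
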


\begin{corollary}[\cite{AGS-Calc}, \cite{RaSt}]
Assume that some/all of the above properties are satisfied. Then
\begin{itemize}
\item
$(\E,\Dom(\E))$ is a strongly local, quasi-regular Dirichlet form.
\item
Its carr\`e du champ operator coincides with the squared minimal weak upper gradient: $\Gamma(u)=|Du|_w^2$.
\item
The generator of the Dirichlet form is
the negative Laplacian $-\Delta$, introduced as the single-valued subdifferential of $\E$. It is a linear self-adjoint operator.
\item
The $\EVI_\KK$-curve in (ii) coincides with the heat flow: $\mu_t=P_t\mu_0$.
\item
For every $\mu_0,\mu_1\in\Pr_2(X)$ with $\mu_0,\mu_1\ll m$ the geodesic connecting them is unique.
\end{itemize}
\end{corollary}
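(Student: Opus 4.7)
The plan is to split the five bullets into three groups: the Dirichlet form structure (items (1)--(3)), the identification of the $\EVI_\KK$-curve with the heat flow (item (4)), and the uniqueness of geodesics (item (5)), the last of which will be the main obstacle.

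First I would handle (1)--(3) simultaneously, using only the infinitesimal Hilbertianity assumption. Since $\E$ is a $2$-homogeneous, lower semicontinuous, convex functional on $L^2(X,m)$ satisfying the parallelogram law, polarization yields a closed symmetric bilinear form $\E(u,v)$ with $\E(u,u)=\E(u)$. To upgrade this to a Dirichlet form, I would verify the Markov property on the pre-energy using the chain rule for metric slopes, $|D(\phi\circ f)|\le |\phi'(f)|\cdot|Df|$ for $1$-Lipschitz $\phi$, and then pass to the lower semicontinuous envelope. Strong locality is inherited from the locality of $|\cdot|_w$, while quasi-regularity is supplied by property C, which produces a separating family of continuous functions in $\Dom(\E)$ and hence a Borel nest. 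For the carré du champ I would match the identity $\E(u)=\int|Du|_w^2\,dm$ against the defining formula $\int \Gamma(u)\phi\,dm = \E(u,u\phi)-\tfrac12 \E(u^2,\phi)$ by applying the Leibniz rule for $|\cdot|_w$ from \cite{AGS-Calc}, producing $\Gamma(u)=|Du|_w^2$. Linearity and self-adjointness of the generator $-\Delta$ are then automatic, as the subdifferential of a closed quadratic form is single-valued and linear on its domain.

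For (4) I would invoke the uniqueness of gradient flows. The previous proposition already gives $P_t(fm)=(T_tf)\,m$ for densities $f\in L^2(X,m)$. A standard argument (see \cite{AGS-Calc}) shows that any $\EVI_\KK$-solution is automatically a metric gradient flow of $\Ent$ in $(\Pr_2(X),W_2)$, and under $\CD(\KK,\infty)$ such gradient flows are unique. Hence the $\EVI_\KK$-curve in (ii) must coincide with $P_t\mu_0$ for every $\mu_0\in\Pr_2(X)$.

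The principal difficulty lies in (5). Here I would follow the strategy of \cite{RaSt}. The first step is to combine $\CD(\KK,\infty)$ with finiteness of the entropy at the endpoints to show that any $W_2$-geodesic joining two absolutely continuous measures stays within absolutely continuous measures. The second step is the crucial one: use the $\EVI_\KK$-contraction (iii), which is now available from Theorem~\ref{fund}, together with the linearity of the heat semigroup obtained in (1)--(3), to establish an essentially non-branching property. Concretely, if $(\mu_t^1)$ and $(\mu_t^2)$ were two distinct geodesics with the same absolutely continuous endpoints, one could form the convex combination $\bar\mu_t=\tfrac12(\mu_t^1+\mu_t^2)$ and compare its entropy against the rigidity forced by $\KK$-convexity along each branch; evolving under $P_t$ and invoking the strict $W_2$-contraction then produces a contradiction. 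Equivalently, the $\EVI_\KK$-inequality pins down the optimal Kantorovich potential up to an additive constant, so that the displacement interpolation is uniquely determined. The delicate point is that one does not have gradients of potentials at one's disposal in the metric setting, so the argument must be carried out entirely in terms of entropy convexity and heat-flow contraction.
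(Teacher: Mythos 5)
The paper itself offers no proof of this corollary: it is stated as a pure citation of \cite{AGS-Calc} and \cite{RaSt}, so your proposal has to be measured against the arguments in those references. For the first four bullets your outline is essentially the correct one: polarization of the quadratic Cheeger energy into a closed symmetric form, Markovianity and strong locality from the calculus rules for $|D\cdot|_w$, the carr\'e du champ identity via the Leibniz rule, and identification of the $\EVI_\KK$-curve through uniqueness of $\EVI$-solutions (equivalently, of metric gradient flows of $\Ent$), extended from finite-entropy data to all of $\Pr_2(X)$ by the contraction property. The one soft spot there is quasi-regularity: property C only supplies the separating family of continuous elements of $\Dom(\E)$; the harder ingredient, the compact (tight) nest, comes from the integrability assumption (\ref{gaussian}) together with Lipschitz cut-offs built from the distance function, and your sketch passes over it.

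The genuine gap is in the fifth bullet. You correctly name the two relevant tools --- convex combinations of optimal geodesic plans and $\KK$-convexity of $\Ent$ along \emph{every} geodesic, which is what the $\EVI_\KK$ property delivers --- but the mechanism you propose for the contradiction, ``evolving under $P_t$ and invoking the strict $W_2$-contraction'', is not the argument of \cite{RaSt} and does not close. For $\KK\le 0$ (the case of interest) the factor $e^{-\KK t}$ is not a strict contraction, so nothing can be extracted from it, and the heat flow plays no role in the uniqueness proof. Moreover, a single halving $\bar\mu_t=\tfrac12(\mu^1_t+\mu^2_t)$ produces an entropy defect of at most $\log 2$ against the $\KK$-convexity bound, which is a finite discrepancy rather than a contradiction. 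The actual proof in \cite{RaSt} is an \emph{iterated} decomposition: assuming branching or two distinct optimal geodesic plans, one splits the plan into $2^n$ renormalized pieces whose marginals at a suitable time are mutually singular while the endpoint marginals stay under control; since mixing $N$ mutually singular measures lowers the entropy by exactly $\log N$ below the average, comparing the $\KK$-convexity inequality for the mixture with the inequalities for the individual pieces yields an estimate that degenerates as $n\to\infty$. Finally, your fallback claim that the $\EVI_\KK$-inequality ``pins down the Kantorovich potential up to an additive constant'' would not suffice even if established: uniqueness of the potential does not exclude branching of the displacement interpolation, and excluding that branching is precisely the content of this bullet.
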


The previous theorem is of fundamental importance for our understanding of curvature bounds on metric measure spaces and it has  plenty of applications. It also allows for various extensions and improvements.
One direction of improvement is to combine the curvature bound $\KK$ with a dimension bound $N$ which leads to  the curvature-dimension condition $\CD(\KK,N)$ introduced in \cite{Stu2} and also studied (in slightly modified form) in \cite{Vi2}, see also \cite{BaSt}, \cite{CaSt}.
Erbar-Kuwada-Sturm \cite{EKS} succeeded to formulate proper versions (taking into account the additional information of the upper dimension bound $N<\infty$) of each of the properties in the previous theorem
 -- among them
$\EVI_{\KK,N}$ and $\BE(\KK,N)$ inequalities -- and to prove their equivalence.
The $\CD(\KK,N)$-condition for finite $N$ also has the advantage that it allows to deduce curvature bounds under time change of the process and/or conformal transformation of the metric \cite{Stu3}.

Another direction will be to consider variable  curvature bounds instead of uniform bounds. This will be the topic of the final chapter 3 of this paper. There we will introduce and study appropriate modifications of the previous properties (i), (ii), (iv) and (v) for
non-constant curvature bounds $\k:X\to\R$. For a more refined approach -- in a more `regular' setting however -- which even allows to define a \emph{Ricci tensor}, see \cite{Stu3}.

Yet another direction of improving the previous theorem consists in studying $L^p$-versions instead of $L^2$-versions.
Using a remarkable  self-improvement property of Bochner's inequality in the `smooth' $\Gamma_2$-setting (i.e. assuming the existence of a nice algebra of functions), Bakry \cite{Bak} deduced $L^p$ versions of the gradient estimate (iv).  Savar\'e \cite{Sav} extended this argument to the non-smooth setting and combined it with Kuwada's duality argument \cite{Kuw, Kuw2} to obtain an $L^p$-Wasserstein contraction estimate in the spirit of \cite{ReSt}.

\begin{theorem}[\cite{Sav}]
\label{savare}
Assume that $(X,d,m)$ satisfies CD$(K,\infty)$. Then for all $p\in[1,\infty]$,  all $\mu_1,\mu_2\in\Pr(X)$ and all $t\in\R_+$
\begin{equation*}W_p(P_t\mu_1, P_t\mu_2)\le e^{-Kt}\, W_p(\mu_1,\mu_2).\end{equation*}
\end{theorem}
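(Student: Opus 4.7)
The plan is to combine two classical pillars: a self-improvement of Bochner's inequality originating with Bakry, and a duality between semigroup gradient estimates and Wasserstein contraction due to Kuwada. Throughout I would exploit the equivalences of Theorem~\ref{fund} together with the carr\'e-du-champ identification $\Gamma(u)=|Du|_w^2$ from the corollary. The first step is to upgrade Bochner $\Gamma_2(u)\ge K\,\Gamma(u)$ to a family of strengthenings of the form
$$\Gamma_2(u)\;\ge\; K\,\Gamma(u) \;+\; R_q(u),\qquad q\in[1,\infty],$$
where $R_q(u)\ge 0$ is the dimension-free remainder arising from Bakry's trick of testing (v) against $u+\varepsilon\,\psi(u)$ and optimizing over smooth nonlinearities $\psi$. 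In the smooth $\Gamma_2$-calculus this is a purely algebraic manipulation; here, following Savar\'e, the optimization must be realized without an algebra of test functions closed under composition with smooth nonlinearities, by carefully synchronizing the abstract $\Gamma_2$-operator (from the Dirichlet form) with the concrete slope calculus $|Du|_w$.

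\emph{$L^q$-gradient estimates.} With the improved Bochner in hand, I would run Bakry's monotonicity argument: fix $t>0$, $q\in(1,\infty)$, $\varepsilon>0$, and set
$$\phi(s)\;:=\;T_s\bigl((\varepsilon+|DT_{t-s}u|_w^2)^{q/2}\bigr),\qquad s\in[0,t].$$
Differentiating in $s$ and invoking the improved Bochner one verifies $\phi'(s)\ge qK\,\phi(s)$; Gronwall followed by $\varepsilon\downarrow 0$ then yields
$$|DT_tu|_w \;\le\; e^{-Kt}\bigl(T_t(|Du|_w^q)\bigr)^{1/q},\qquad q\in[1,\infty],$$
with the endpoint cases $q\in\{1,\infty\}$ obtained by monotone convergence and Jensen.

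\emph{Kuwada duality.} Finally I would convert the $L^q$-gradient estimate into $W_p$-contraction with conjugate exponent $1/p+1/q=1$. Using the dual identity $\int T_tf\,d\mu=\int f\,dP_t\mu$, I would test the Kantorovich characterization of $W_p^p(P_t\mu_1,P_t\mu_2)$ against the Hopf--Lax evolution $Q_s\varphi$ of an optimal Kantorovich potential for the cost $d^p/p$, control the derivative of $s\mapsto T_sQ_s\varphi$ via the $L^q$-gradient estimate, and integrate. This yields
$$W_p(P_t\mu_1,P_t\mu_2) \;\le\; e^{-Kt}\,W_p(\mu_1,\mu_2),\qquad \forall\,p\in[1,\infty],$$
with the endpoints $p\in\{1,\infty\}$ covered by the corresponding endpoint gradient bounds.

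\emph{Main obstacle.} The genuinely hard step is the self-improvement in the first paragraph. The subsequent two steps, although technically intricate, follow an established template once the improved pointwise Bochner inequality is available in the non-smooth setting---and it is precisely delivering this inequality without access to a smooth algebra of test functions that constitutes Savar\'e's main contribution.
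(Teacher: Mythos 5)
The paper offers no proof of Theorem~\ref{savare}: the result is imported verbatim from \cite{Sav}, and the only indication of its proof is the sentence immediately preceding the statement, namely that Savar\'e extends Bakry's self-improvement of Bochner's inequality to the non-smooth setting and combines it with Kuwada's duality \cite{Kuw,Kuw2}. Your plan reproduces exactly that architecture --- self-improved $\Gamma_2$-inequality, Bakry's interpolation $s\mapsto T_s\bigl((\varepsilon+\Gamma(T_{t-s}u))^{q/2}\bigr)$ yielding the $L^q$-gradient estimates, then Kuwada duality with conjugate exponents $1/p+1/q=1$ --- so at the level of strategy you are aligned with the source the paper cites, and the Gronwall rate $e^{-Kt}$ is consistent with the paper's normalization of the heat flow (generator $\Delta$, not $\tfrac12\Delta$).

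As a proof, however, your text has a genuine gap, and you name it yourself: the self-improvement, i.e.\ the passage from $\BE(K,\infty)$ to $\Gamma_2(u)\ge K\,\Gamma(u)+\Gamma\bigl(\Gamma(u)^{1/2}\bigr)$ (equivalently a remainder of the form $\bigl|D|Du|_w\bigr|_w^2$), is only announced, never established, and in the infinitesimally Hilbertian setting this step \emph{is} the content of \cite{Sav}: one must work with the measure-valued $\Gamma_2$, justify chain-rule manipulations for $u$ with $\Gamma(u)$ merely in $L^1$, and give meaning to $\Gamma(\Gamma(u))/(4\Gamma(u))$ on the set where $\Gamma(u)$ vanishes. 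Two smaller corrections to your outline. First, the self-improvement produces a single strengthened inequality, not a $q$-indexed family $R_q(u)$; the exponent $q$ enters only in the Gronwall step. Second, the self-improvement is genuinely needed only for $q\in[1,2)$, i.e.\ $p\in(2,\infty]$: for $q\ge2$ Jensen's inequality applied to the plain $L^2$-gradient estimate of Theorem~\ref{fund}(iv) already gives $|DT_tu|_w\le e^{-Kt}\bigl(T_t(|Du|_w^q)\bigr)^{1/q}$, so the hard endpoint is $q=1$, dual to $p=\infty$ --- which is precisely the case the paper invokes in the coupling construction of Section~2.
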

This result will be the key ingredient for the construction of coupled pairs of Brownian motions on $X$ which is the content  of the subsequent chapter.

\section{Coupled Pairs of Brownian Motions on $X$}

\subsection{Brownian Motion on $X$}

As a consequence of the previous theorem (the classical $p=2$ version suffices), the gradient flow $(P_t)_{t>0}$ extends to all of $\Pr_2(X)$ such that for fixed $t>0$ the mapping $\mu\mapsto P_t\mu$ is  Lipschitz continuous (w.r.t. the $W_2$-metric).
Let us put $p_t(x,.)=P_t\delta_x$. Then the mapping $x\mapsto p_t(x,.)$ from $X$ to $\Pr_2(X)$ is Lipschitz continuous, in particular, Borel measurable. Thus $p_t(.,.)$ is a Markov kernel on $(X,\B(X))$.

Uniqueness of the gradient flow implies $P_s(P_t\mu)=P_{s+t}\mu$ for all $\mu$ with finite entropy; by continuity in $\mu$ this extends to all of $\Pr_2(X)$. Thus $(p_t)_{t\ge0}$ is a semigroup of Markov kernels on $(X,\B(X))$. Note that each of the measures $p_t(x,.)$ for $x\in X$ and $t>0$ is absolutely continuous w.r.t. $m$.

Since the Dirichlet form $\E$ on $L^2(X,m)$ is quasi-regular and local, there exists an $m$-invariant continuous strong Markov process which (or more precisely, the transition semigroup of which) is $m$-equivalent to the semigroup $(p_t)_{t\ge0}$. Since the latter is absolutely continuous w.r.t. $m$ the continuous Markov process can be chosen to be equivalent to $(p_t)_{t\ge0}$  (\cite{FOT}, Theorems 4.5.1 and 4.5.4).

This continuous stochastic process can be obtained as follows:
Given the Markov semigroup $(p_t)_{t\in\R_+}$ on $(X,\B(X))$, an arbitrary initial distribution $\mu\in\Pr(X)$ and a finite subset $J=\{t_1,\ldots,t_r\}$ of $\R_+$ we define the \emph{finite dimensional distribution} $P_J^\mu$ --
 a probability measure on $X^r$ -- as follows
\begin{equation*}P_J^\mu(B_1\times\ldots B_r)= \int_{X} \int_{B_1}\ldots \int_{B_r} p_{t_r-t_{r-1}}(x_{r-1},dx_r)\, \ldots \, p_{t_1}(x_{0},dx_1)\,
\mu(dx_0).\end{equation*}
The probability measures $\{P_J^\mu:\ J \mbox{ finite }\subset\R_+\}$ constitute a \emph{consistent family} which implies that their projective limit
\begin{equation*}\PP^\mu_{\R_+}=\lim_{\leftarrow}P_J^\mu\end{equation*}
exists: it is a probability measure on $(X^{\R_+},\B(X)^{\R_+})$ with the property that
\begin{equation*}(\pi_J)_*\PP^\mu_{\R_+}=P_J^\mu\end{equation*}
for each finite $J\subset\R_+$ where $\pi_J$ denotes the projection $\omega\mapsto(\omega(t_1),\ldots,\omega(t_r))$ from $X^{\R_+}$ onto $X^r$.
The measures $\PP^x_{\R_+}:=\PP^{\delta_x}_{\R_+}$  depend in a measurable way on $x$ and
\begin{equation*}\PP^\mu_{\R_+}(.)=\int_X \PP^x_{\R_+}(.)\,\mu(dx)\end{equation*}
for any $\mu\in\Pr(X)$ (cf. \cite{Bauer}).

There are various ways to construct  a continuous modification out of this stochastic process.
Firstly, since we already know that there exists a continuous modification we may refer to a result of Doob which states that in this case the subset
${\mathcal C}(\R_+,X)$ has full outer measure w.r.t. $\PP^\mu_{\R_+}$. This allows to define a probability measure $\PP^\mu$ on
${\mathcal C}(\R_+,X)$ equipped with the trace $\sigma$-field $\B(X)^{\R_+}\cap {\mathcal C}(\R_+,X)$ by
\begin{equation*}\PP^\mu(A\cap{\mathcal C}(\R_+,X)):=\PP^\mu_{\R_+}(A)\qquad(\forall A\in \B(X)^{\R_+}).\end{equation*}
The process $(\pi_t)_{t\ge0}$ on $({\mathcal C}(\R_+,X),\B(X)^{\R_+}\cap {\mathcal C}(\R_+,X),\PP^\mu)$ will do the job
(\cite{Bauer}, Theorem 63.2 and Lemma 63,8) and the process $(B_t)_{t\ge0}:=(\pi_{t/2})_{t\ge0}$ then will be a Brownian motion on $X$ with initial distribution $\mu$.

Alternatively, we may
restrict the given process $(\pi_t)_{t\in\R_+}$ on $(X^{\R_+},\B(X)^{\R_+},\PP^\mu_{\R_+})$
to dyadic time instances, i.e. replace $\R_+$ by the set $\D:=\left\{k\cdot 2^{-n}:\ k,n\in\N_0\right\}$.
Then extend this process (with time parameter $\D$) by local uniform continuity -- outside of a zero set -- to a process with time parameter $\R_+$.
Indeed, for $\PP^\mu_{\R_+}$-a.e. $\omega$
 the limit
\begin{equation*}B_t(\omega)=\lim_{s\to t/2, s\in\D} \pi_s(\omega)\end{equation*}
exists for each $t\in\R_+$ and   the trajectory $t\mapsto B_t(\omega)$ is continuous.
Moreover, for each $t\in\R_+$ the random variables  $\pi_{t/2}$ and $B_t$ coincide $\PP^\mu_{\R_+}$-a.s.
(Cf. \cite{Bauer}, Lemma 63.5 and subsequent Remarks 1+2).
Note that for the construction of the process $(B_t)_{t\in\R_+}$ the measures $\PP^\mu_{\R_+}$ may be replaced by its projection onto the space $X^\D$.

\begin{proposition}
Given any initial distribution $\mu$ on $X$, the stochastic process $(B_t)_{t\in\R_+}$ with values in $X$, constructed as above on the probability space $(X^\D, \B(X)^\D, \PP^\mu_\D)$,
is a Brownian motion on $X$ with initial distribution $\mu$.
\end{proposition}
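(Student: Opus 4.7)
The plan is to combine the continuous modification already supplied by the Fukushima--Oshima--Takeda apparatus with a transfer-of-measurability argument. I would first invoke the fact, recorded immediately before the proposition, that since the Dirichlet form $(\E,\Dom(\E))$ is quasi-regular and strongly local and each kernel $p_t(x,\cdot)$ is absolutely continuous w.r.t.\ $m$, there exists a continuous modification of $(\pi_t)_{t\in\R_+}$ under $\PP^\mu_{\R_+}$. That is, a probability measure $\widetilde\PP^{\mu}$ on $C(\R_+,X)$ whose finite-dimensional distributions agree with those of $\PP^\mu_{\R_+}$ for \emph{every} starting law $\mu$ (and not merely for $m$-a.e.\ initial point).

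Second, I would introduce the event
\begin{equation*}
A:=\bigcap_{T\in\N}\bigl\{\omega\in X^\D:\ s\mapsto \pi_s(\omega)\text{ is uniformly continuous on }\D\cap[0,T]\bigr\},
\end{equation*}
which lies in $\B(X)^\D$ because it depends on only countably many coordinates. The restriction map $C(\R_+,X)\to X^\D$, $\omega\mapsto(\omega(s))_{s\in\D}$, lands in $A$ by continuity, and its pushforward of $\widetilde\PP^\mu$ equals $\PP^\mu_\D$ since both measures are determined by the same consistent family of finite-dimensional distributions. Hence $\PP^\mu_\D(A)=1$. On $A$ I would then define $B_t(\omega):=\lim_{s\to t/2,\, s\in\D}\pi_s(\omega)$, which exists and provides a trajectory continuous in $t\in\R_+$; off $A$ any harmless convention ($B_\cdot\equiv x_0$) suffices. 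For $t$ with $t/2\in\D$ one has $B_t=\pi_{t/2}$ $\PP^\mu_\D$-a.s. For general $t_1<\cdots<t_r$ the joint law of $(B_{t_1},\ldots,B_{t_r})$ is identified by dyadic approximation: the law of $(\pi_{s_1^{(n)}},\ldots,\pi_{s_r^{(n)}})$ for $s_i^{(n)}\to t_i/2$ in $\D$ is the iterated kernel product built from $p_{s_i^{(n)}-s_{i-1}^{(n)}}$, and weak continuity in $t$ of $p_t(x,\cdot)$ -- a consequence of the $W_2$-Lipschitz continuity of $\mu\mapsto P_t\mu$ supplied by Theorem~\ref{savare} -- identifies the limit with the iterated kernel built from $p_{(t_i-t_{i-1})/2}$. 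This is precisely the Brownian finite-dimensional distribution with initial law $\mu$.

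The main obstacle, as I see it, is the measure-theoretic bookkeeping in the second step: one must verify both the $\B(X)^\D$-measurability of $A$ and the fact that the continuous modification from \cite{FOT} really has the exact finite-dimensional distributions of $\PP^\mu_\D$ for \emph{every} initial law $\mu$, rather than only $m$-a.s.\ After that is settled, the full-measure property of $A$ transfers from $\widetilde\PP^\mu$ to $\PP^\mu_\D$ by pushforward, and the remaining identifications -- continuity of $t\mapsto B_t$, initial distribution, and iterated-kernel finite-dimensional laws -- are routine applications of the Markov property of $(p_t)$ and of its strong continuity in $t$.
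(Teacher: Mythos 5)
Your proposal is correct and follows essentially the same route as the paper's dyadic construction: both use the existence of a continuous version (supplied by the quasi-regular Dirichlet form theory together with absolute continuity of $p_t(x,\cdot)$) to conclude that $\PP^\mu_\D$-a.e.\ dyadic path is locally uniformly continuous, then extend along dyadics and identify the finite-dimensional distributions. The only cosmetic differences are that the paper identifies $B_t$ with $\pi_{t/2}$ $\PP^\mu_{\R_+}$-a.s.\ on the larger space $X^{\R_+}$ (citing Bauer, Lemma 63.5) instead of taking weak limits of dyadic marginals, and that the time-continuity of $t\mapsto p_t(x,\cdot)$ you invoke really comes from the continuity of the gradient-flow trajectory $t\mapsto P_t\mu$ rather than from the spatial Lipschitz bound of the Wasserstein contraction estimate.
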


Recall that by convention a Brownian motion on $X$ has generator $\frac12\Delta$. Note that the link to the semigroup $(T_t)_{t\ge0}$ -- which we defined as the semigroup of selfadjoint operators on $L^2(X,m)$ with generator $\Delta$ -- is given by
\begin{equation*}T_tu(x)=\EE_x\left[u(B_{2t})\right]\end{equation*}
for $m$-a.e. $x\in X$, each $t>0$ and each Borel measurable $L^2$-function $u$ on $X$.

\subsection{Coupled Semigroups in Discrete Time}

For the sequel, we have to introduce additional notation.
\begin{equation*}\B^u(X^2):=\bigcap_{\alpha\in\Pr(X^2)}\B^\alpha(X^2)\end{equation*} will denote the $\sigma$-field of \emph{universally measurable} subsets of $X^2$. It is the intersection of all the $\B^\alpha(X^2)$ where $\alpha$ runs through the set $\Pr(X^2)$ of all Borel probability measures on $X^2$ and where $\B^\alpha(X^2)$ is the completion of the Borel $\sigma$-field on $X^2$ w.r.t. $\alpha\in\Pr(X^2)$.
Moreover, $\D:=\left\{k\cdot 2^{-n}:\ k,n\in\N_0\right\}$
will denote the set of nonnegative \emph{dyadic} numbers whereas
$\D_n:=\left\{k\cdot 2^{-n}:\ k\in\N_0\right\}$ for fixed $n\in\N_0$.

\begin{lemma} For each $t\in\R_+$ there exists a Markov kernel $q_t^*$ on $(X^2,\B^u(X^2))$ with the following properties:
\begin{itemize}
\item[\bf(i)] For each $(x,y)\in X^2$ the probability measure $q_t^*\Large((x,y),.\Large)$ is a coupling of the probability measures $p_t(x,.)$ and $p_t(y,.)$.
\item[\bf(ii)] For each $(x,y)\in X^2$ and $q_t^*\Large((x,y),.\Large)$-a.e. $(x',y')\in X^2$
\begin{equation*}d(x',y')\le e^{-Kt}\cdot d(x,y).\end{equation*}
\end{itemize}
\end{lemma}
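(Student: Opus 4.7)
The plan is to derive the kernel from Savar\'e's contraction (Theorem \ref{savare}) in its $W_\infty$ version, combined with a measurable selection argument. Specialising Theorem \ref{savare} to $p=\infty$, $\mu_1=\delta_x$, $\mu_2=\delta_y$ gives
\[
W_\infty\bigl(p_t(x,\cdot),\,p_t(y,\cdot)\bigr)\;\le\;e^{-Kt}\,d(x,y)\qquad\forall\,(x,y)\in X^2.
\]
The goal is then to select, for each pair $(x,y)$, an optimal coupling realising this bound, jointly measurably in $(x,y)$.

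First I would verify pointwise existence. The set $\Pi(p_t(x,\cdot),p_t(y,\cdot))$ of couplings is weakly compact by Prokhorov. The functional $q\mapsto\|d\|_{L^\infty(X^2,q)}$ is weakly lower semicontinuous on $\Pr(X^2)$: its sublevel set at height $c$ equals $\{q:q(\{d>c\})=0\}$, which is closed because $q\mapsto q(\{d>c\})$ is lower semicontinuous on the open set $\{d>c\}$ (Portmanteau). Hence the infimum defining $W_\infty$ is attained, and the set
\[
C(x,y):=\bigl\{q\in\Pi(p_t(x,\cdot),p_t(y,\cdot))\,:\,q(\{(x',y'):d(x',y')>e^{-Kt}d(x,y)\})=0\bigr\}
\]
is a non-empty closed subset of $\Pr(X^2)$ for every $(x,y)\in X^2$.

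Next I would check that the graph $\mathcal{G}:=\{(x,y,q):q\in C(x,y)\}$ is Borel in $X^2\times\Pr(X^2)$. The marginal constraints are Borel because $x\mapsto p_t(x,\cdot)$ is Lipschitz continuous (as noted earlier in the chapter) and $q\mapsto(\pi_i)_\ast q$ is continuous. The contraction constraint is Borel because $(x,y,q)\mapsto q(\{d>e^{-Kt}d(x,y)\})$ is a Borel function of all three variables. With $\mathcal{G}$ Borel (hence analytic) and all sections non-empty, the Jankov--von Neumann uniformization theorem delivers a universally measurable selection $(x,y)\mapsto q_t^*((x,y),\cdot)\in C(x,y)$. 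Since $q\mapsto q(A)$ is Borel on $\Pr(X^2)$ for every $A\in\B(X^2)$, the composition $(x,y)\mapsto q_t^*((x,y),A)$ is $\B^u(X^2)$-measurable; this is precisely the Markov kernel required, and properties (i) and (ii) are built into the definition of $C(x,y)$.

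The main obstacle is the selection step. The pointwise existence of a contracting coupling is essentially Savar\'e's estimate together with a routine compactness argument, but assembling the couplings into a kernel that is measurable in $(x,y)$ requires descriptive-set-theoretic machinery; this is precisely why the statement uses the universal completion $\B^u(X^2)$ rather than the Borel $\sigma$-field.
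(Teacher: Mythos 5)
Your proposal is correct and follows essentially the same route as the paper: apply Theorem \ref{savare} with $p=\infty$ to Dirac initial data, observe that the set of couplings satisfying (i) and (ii) is non-empty and closed for each $(x,y)$, and invoke a measurable selection theorem to obtain a universally measurable kernel. You supply more detail on the attainment of the $W_\infty$-infimum and on the measurability of the graph, and you cite Jankov--von Neumann where the paper cites Bogachev's selection theorem, but these are interchangeable instances of the same argument.
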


\begin{proof}
Applying  Theorem \ref{savare} with $p=\infty$ to $\mu_1=\delta_x, \mu_2=\delta_y$ yields the existence of  at least one probability measure $q_t^*((x,y),.)$ with properties (i) and (ii) for each $x,y\in X^2$.
The class of all these measures is closed (for given $x,y$).
Thus according to a classical measurable selection theorem (see e.g. \cite{Boga} , Thm. 6.9.2), we may choose the optimal coupling satisfying (i) and (ii) in such a way that
the map
\begin{equation*}(x,y)\mapsto q_t^*((x,y),.),\qquad \large(X^2,\B^u(X^2)\large)\to \left(\Pr(X^2), \B\left(\Pr(X^2)\right)\right)\end{equation*}
is measurable.
(Note that the $\sigma$-field of universally measurable subsets of $X^2$ contains all Souslin subsets of $X^2$.)
\end{proof}

\begin{lemma} For each $n\in\N_0$ there exists a Markov semigroup $(q_t^{(n)})_{t\in\D_n}$ on $(X^2,\B^u(X^2))$ with the following properties:
\begin{itemize}
\item[\bf(i)] For each $t\in\D_n$ and each $(x,y)\in X^2$ the probability measure $q_t^{(n)}\Large((x,y),.\Large)$ is a coupling of the probability measures $p_t(x,.)$ and $p_t(y,.)$.
\item[\bf(ii)] For each $t\in\D_n$, each $(x,y)\in X^2$ and $q_t^{(n)}\Large((x,y),.\Large)$-a.e. $(x',y')\in X^2$
\begin{equation*}d(x',y')\le e^{-Kt}\cdot d(x,y).\end{equation*}
\end{itemize}
\end{lemma}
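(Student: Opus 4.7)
The plan is to define the semigroup by iterating a single elementary time step. I take $q^{(n)}_0((x,y),\cdot) := \delta_{(x,y)}$, set $q^{(n)}_{2^{-n}} := q^*_{2^{-n}}$ using the kernel supplied by the previous lemma, and for $k\ge 1$ define recursively
\[ q^{(n)}_{(k+1)\cdot 2^{-n}}\big((x,y),A\big) := \int_{X^2} q^{(n)}_{2^{-n}}\big((x',y'),A\big)\, q^{(n)}_{k\cdot 2^{-n}}\big((x,y), d(x',y')\big) \]
for $A\in\B^u(X^2)$. The semigroup identity $q^{(n)}_s\circ q^{(n)}_t = q^{(n)}_{s+t}$ on $\D_n\times\D_n$ is then automatic by associativity of this Chapman--Kolmogorov composition, so the only points left to verify are that the iterate is a universally measurable Markov kernel and that it inherits properties (i) and (ii).

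Granted the measurability, (i) and (ii) both follow by straightforward induction on $k$. For (i), if $q^{(n)}_{k\cdot 2^{-n}}((x,y),\cdot)$ has marginals $p_{k\cdot 2^{-n}}(x,\cdot)$ and $p_{k\cdot 2^{-n}}(y,\cdot)$, then projecting the defining integral onto either coordinate and using the semigroup identity $p_s\circ p_t = p_{s+t}$ for the base semigroup yields that $q^{(n)}_{(k+1)\cdot 2^{-n}}((x,y),\cdot)$ has marginals $p_{(k+1)\cdot 2^{-n}}(x,\cdot)$ and $p_{(k+1)\cdot 2^{-n}}(y,\cdot)$. For (ii), suppose $q^{(n)}_{k\cdot 2^{-n}}((x,y),\cdot)$ is concentrated on pairs $(x',y')$ with $d(x',y')\le e^{-\KK k\cdot 2^{-n}}\, d(x,y)$; for each such $(x',y')$ the one-step kernel $q^*_{2^{-n}}((x',y'),\cdot)$ is concentrated on pairs $(x'',y'')$ with $d(x'',y'')\le e^{-\KK\cdot 2^{-n}}\, d(x',y')$, so Fubini yields the desired bound $d(x'',y'')\le e^{-\KK(k+1)2^{-n}}\, d(x,y)$ almost surely under $q^{(n)}_{(k+1)\cdot 2^{-n}}((x,y),\cdot)$.

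The main obstacle I anticipate is the measurability point: confirming that the Chapman--Kolmogorov integral of a $\B^u$-measurable Markov kernel against another $\B^u$-measurable Markov kernel is itself $\B^u$-measurable in the initial variable. This is a standard but delicate fact once one recalls that $\B^u(X^2)$ contains all Souslin sets and that universally measurable functions are stable under integration against probability kernels whose parameter-dependence is itself universally measurable. A monotone-class argument verifies the stability first for Borel sets $A$, from which the extension to $A\in\B^u(X^2)$ follows by completion against each fixed Borel probability measure on $X^2$.
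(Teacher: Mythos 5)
Your proposal is correct and follows essentially the same route as the paper: define $q^{(n)}_{k\cdot 2^{-n}}$ as the $k$-fold Chapman--Kolmogorov iterate of the one-step kernel $q^*_{2^{-n}}$, then obtain (ii) by iterating the one-step contraction and (i) by projecting onto each coordinate and using the semigroup property of $(p_t)$. Your extra attention to the universal measurability of the composed kernel addresses a point the paper passes over as obvious, and your monotone-class argument is the right way to settle it.
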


\begin{proof}
Given $t\in\D_n$, say $t=k\cdot 2^{-n}$ we put
\begin{equation*}q_t^{(n)}:=\underbrace{q_{2^{-n}}^*\circ\ldots\circ q_{2^{-n}}^*}_{k \ \mathrm{times}}.\end{equation*}
This obviously defines for each $t\in\D_n$ a Markov kernel on $(X^2,\B^u(X^2))$ and $q_s^{(n)}\circ q_t^{(n)}=q_{s+t}^{(n)}$ for all $s,t\in\D_n$.
Moreover, properties (i) and (ii) of the theorem are inherited (by iteration) from the corresponding properties (i) and (ii) for the kernel
$q_{2^{-n}}^*$ (see previous lemma).
Indeed, for each $i=1,\ldots,k$, each  $(x_{i-1},y_{i-1})\in X^2$ and $q_{2^{-n}}^{(n)}\Large((x_{i-1},y_{i-1}),.\Large)$-a.e. $(x_i,y_i)\in X^2$
\begin{equation*}d(x_i,y_i)\le e^{-K2^{-n}}\cdot d(x_{i-1},y_{i-1}).\end{equation*}
This yields (ii) for the kernel $q_t$ with $t=k 2^{-n}$.

Moreover, for each bounded Borel function $f$ on $X$
\begin{eqnarray*}
\lefteqn{\int_{X^2} f(x_k)\, q^{(n)}_{k2^{-n}}((x_0,y_0),d(x_k,y_k))}\\
&=&
\int_{X^2}\left[\int_{X^2}f(x_k)\,q^*_{2^{-n}}((x_{k-1},y_{k-1}),d(x_k,y_k))\right] \\
&&\qquad\qquad\qquad\qquad\qquad\qquad(q^*_{2^{-n}})^{k-1}((x_0,y_0),d(x_{k-1},y_{k-1})) \\
&=&
\int_{X^2}\left[\int_{X^2}f(x_k)\,p_{2^{-n}}(x_{k-1},dx_k)\right] (q^*_{2^{-n}})^{k-1}((x_0,y_0),d(x_{k-1},y_{k-1})) \\
&=&\ldots\\
&=&
\int_{X^2}\left[\int_{X^2}f(x_k)\,p_{2\cdot 2^{-n}}(x_{k-2},dx_k)\right] (q^*_{2^{-n}})^{k-2}((x_0,y_0),d(x_{k-2},y_{k-2})) \\
&=&\ldots\\
&=&
\int_{X^2}\left[\int_{X^2}f(x_k)\,p_{(k-1)\cdot 2^{-n}}(x_{1},dx_k)\right] q^*_{2^{-n}}((x_0,y_0),d(x_{1},y_{1})) \\
&=&\int_X f(x_k)\, p_{k\cdot 2^{-n}}(x_{0},dx_k).
\end{eqnarray*}
Similarly,
\begin{equation*}\int_{X^2} f(y_k)\, q^{(n)}_{k2^{-n}}((x_0,y_0),d(x_k,y_k))=\int_X f(y_k)\, p_{k\cdot 2^{-n}}(y_{0},dy_k).\end{equation*}
This proves property (i).
\end{proof}

\begin{remark}
For given $(x,y)\in X^2$ and $t\in\D$, say $t=k\,2^{-m}$, let us consider the set
\begin{equation*}\{q_t^{(n)}((x,y),.): \, n\ge m\}\subset \Pr(X^2).\end{equation*}
Being a subset of the set of all couplings of $p_t(x,.)$ and $p_t(y,.)$, this set is relatively compact, \cite{Vi2}, Lemma 4.4.
Thus there exists a coupling $q_t((x,y),.)$ and a subsequence $(n_l)_{l\in\N}$ such that
\begin{equation*}q_t=\lim_{l\to\infty} q_t^{n_l}\end{equation*}
(in the sense of weak convergence).
\end{remark}

\subsection{Coupled Stochastic Processes}

For the sequel, let us fix an initial distribution $\alpha\in \Pr(X^2)$ with marginals $\alpha_1=(e_1)_*\alpha$ and $\alpha_2=(e_2)_*\alpha$.
To simplify notation, in the following lemma and its proof we drop $\alpha$ from the notation.
For any $n\in\N_0$ and any finite subset $J$ of $\D_n$, say $J=\{t_1,\ldots,t_r\}$ with $t_1<\ldots <t_r$, we define a probability measure $Q_J^{(n)}$ on $(X^2)^{|J|}$ by
\begin{eqnarray*}
\lefteqn{Q_J^{(n)}\Large( A_1\times\ldots \times A_r\Large)=}\\
&=&
\int_{X^2} \int_{A_1}\ldots \int_{A_r} q^{(n)}_{t_r-t_{r-1}}((x_{r-1},y_{r-1}),d(x_r,y_r))\, \ldots \\
&&\qquad\qquad\qquad\qquad\qquad\qquad\ldots\, q^{(n)}_{t_1}((x_{0},y_{0}),d(x_1,y_1))\,
\alpha(d(x_0,y_0)).
\end{eqnarray*}
For fixed $n\in\N_0$, obviously $\{Q_J^{(n)}: \ J \mbox{ finite }\subset\D_n\}$ is a \emph{consistent} family of probability measures.

\begin{lemma}
For fixed finite $J\subset\D$, say $J\subset\D_m$, the family
$\{Q_J^{(n)}: \ n\in\N_0, \ n\ge m\}$
is a tight family of probability measures on $(X^2)^{|J|}$.
\end{lemma}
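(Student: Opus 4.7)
The guiding principle is that tightness of a family of measures on a product Polish space follows from tightness of each of its coordinate marginals, and that a single Borel probability measure on a Polish space is automatically tight. Since $X$ (hence $X^2$ and $(X^2)^{|J|}$) is Polish, it suffices to show that the coordinate marginals of the family $\{Q_J^{(n)}:n\ge m\}$ do not depend on $n$; in fact I will show they agree with the finite-dimensional distributions $P_J^{\alpha_1}$ and $P_J^{\alpha_2}$ of the heat flow.

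Concretely, write a generic element of $(X^2)^{|J|}$ as $\big((x_1,y_1),\dots,(x_r,y_r)\big)$ and let $\Phi_1,\Phi_2:(X^2)^{|J|}\to X^r$ denote projection onto the $x$-components and the $y$-components. The first step is to verify, by iterating the marginal identity
\begin{equation*}
\int_{X^2} f(x')\, q^*_{2^{-n}}\big((x,y),d(x',y')\big)=\int_X f(x')\, p_{2^{-n}}(x,dx'),
\end{equation*}
exactly as in the proof of the previous lemma, that $(\Phi_1)_*Q_J^{(n)}=P_J^{\alpha_1}$ and symmetrically $(\Phi_2)_*Q_J^{(n)}=P_J^{\alpha_2}$. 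In particular these marginals are the \emph{same} measure for every $n\ge m$.

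Next, fix $\varepsilon>0$. Since $P_J^{\alpha_1}$ and $P_J^{\alpha_2}$ are Borel probability measures on the Polish space $X^r$, they are tight. Choose compact sets $K_1,K_2\subset X^r$ with $P_J^{\alpha_i}(X^r\setminus K_i)\le \varepsilon/2$ for $i=1,2$, and set
\begin{equation*}
K:=\Phi_1^{-1}(K_1)\cap\Phi_2^{-1}(K_2)\ \subset\ (X^2)^{|J|}.
\end{equation*}
Identifying $(X^2)^{|J|}$ with $X^r\times X^r$ through $(\Phi_1,\Phi_2)$, the set $K$ corresponds to $K_1\times K_2$ and is therefore compact. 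A union bound using the marginal identity from the first step gives
\begin{equation*}
Q_J^{(n)}\big((X^2)^{|J|}\setminus K\big)\le P_J^{\alpha_1}(X^r\setminus K_1)+P_J^{\alpha_2}(X^r\setminus K_2)\le \varepsilon
\end{equation*}
uniformly in $n\ge m$, which is precisely the tightness claim.

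There is essentially no hard step here: the only thing that could go wrong is the marginal identification, and that is handled by exactly the same telescoping computation already performed for one coordinate in the preceding lemma, applied here to both coordinates.
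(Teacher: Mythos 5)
Your proof is correct and follows essentially the same route as the paper: both arguments rest on the observation that the two coordinate marginals of $Q_J^{(n)}$ are independent of $n$ and equal the finite-dimensional distributions of the heat flow (this is exactly the content of Remark \ref{margi}), and then combine tightness on a Polish space with a union bound over the two components. The only cosmetic difference is that the paper uses the single-time marginals $P_{t_i}^{\alpha_1}, P_{t_i}^{\alpha_2}$ on $X$ and the compact set $(A_1\times A_2)^r$, whereas you use the full $J$-marginals $P_J^{\alpha_1}, P_J^{\alpha_2}$ on $X^r$ and the compact set corresponding to $K_1\times K_2$; both are equally valid.
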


\begin{proof}
Let $J$ be given as $J=\{t_1,\ldots,t_r\}$ with $t_i\in\D_m$. Put
$P_{t_i}^{\alpha_1}(.)=\int_X p_{t_i}(x_0,.)\alpha_1(dx_0)$ and $P_{t_i}^{\alpha_2}(.)=\int_X p_{t_i}(y_0,.)\alpha_2(dy_0)$.
The families $\{P_{t_i}^{\alpha_1}: \ i=1,\ldots,r\}$ and $\{P_{t_i}^{\alpha_2}: \ i=1,\ldots,r\}$ of probability measures on $X$ are tight.
Thus, given $\epsilon>0$ there exist compact sets $A_1,A_2\subset X$ such that
\begin{equation*}P_{t_i}^{\alpha_1}(X\setminus A_1)<\epsilon,\quad P_{t_i}^{\alpha_2}(X\setminus A_2)<\epsilon \qquad(\forall i).\end{equation*}
Put  $\overrightarrow{A}=(A_1\times A_2)^r$. Then for all $n\in\N_0$
\begin{eqnarray*}
Q_J^{(n)}((X^2)^r\setminus \overrightarrow{A})&\le&
\sum_{i=1}^r Q_{t_i}^{(n)}(X^2\setminus A_1\times A_2)\\
&\le& \sum_{i=1}^r \left[Q_{t_i}^{(n)}((X\setminus A_1)\times X)+Q_{t_i}^{(n)}(X \times(X\setminus  A_2))\right] \\
&=& \sum_{i=1}^r \left[P_{t_i}^{\alpha_1}(X\setminus A_1)+P_{t_i}^{\alpha_2}(X\setminus  A_2)\right] \\
&\le& 2r\cdot \epsilon.
\end{eqnarray*}
Since $\overrightarrow{A}$ is a compact subset in $(X^2)^{|J|}$ this proves the claim.
\end{proof}

\begin{remark}\label{margi} Given $J=\{t_1,\ldots,t_r\}$ as above, define $\overrightarrow{e_1}: (X^2)^r\to X^r$ by
$((x_1,y_1),\ldots, (x_r,y_r))\mapsto (x_1,\ldots,x_r)$.
Then \begin{equation*}\left(\overrightarrow{e_1}\right)_*Q_J^{(n)}=P^{\alpha_1}_J\qquad(\forall n)\end{equation*}
where $P^{\alpha_1}_J$ is defined -- as before -- as a probability measure on $X^r$ by
\begin{equation*}P^{\alpha_1}_J(B_1\times\ldots B_r)= \int_{X} \int_{B_1}\ldots \int_{B_r} p_{t_r-t_{r-1}}(x_{r-1},dx_r)\, \ldots \, p_{t_1}(x_{0},dx_1)\,
\alpha_1(dx_0).\end{equation*}
Similarly, $(\overrightarrow{e_2})_*Q_J^{(n)}=P^{\alpha_2}_J$.
\end{remark}

\begin{proposition} (i) There exist a projective family
$\{Q_J^\alpha: \ J \mbox{ finite }\subset\D\}$ of probability measures
and  a subsequence $(n_l)_{l\in\N}$ such that for each finite $J\subset\D$
\begin{equation*}Q_J^{(n_l)}\to Q_J^\alpha\quad\mbox{weakly on }(X^2)^{|J|}\end{equation*}
as $l\to\infty$.

(ii) For each finite $J\subset\D$
\begin{equation*}\left(\overrightarrow{e_1}\right)_*Q_J^\alpha=P^{\alpha_1}_J\qquad\left(\overrightarrow{e_2}\right)_*Q_J^\alpha=P^{\alpha_2}_J\end{equation*}
where $P_J^{\alpha_1}$ and $P_J^{\alpha_2}$ denote the finite dimensional distribution of the heat flow on $X$ with initial distribution $\alpha_1$ and $\alpha_2$, resp.
\end{proposition}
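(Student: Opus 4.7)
The plan is to combine the tightness lemma above with Prokhorov's theorem and a Cantor diagonal extraction, invoking at the end the fact that pushforward under continuous maps commutes with weak convergence. Since $\D$ is countable, the collection of its finite subsets is also countable; enumerate them as $(J_k)_{k\in\N}$ and for each $k$ fix $m_k$ with $J_k\subset\D_{m_k}$.

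Next I build the subsequence by diagonalization. Set $(n^{(0)}_l)_l=(l)_l$. Inductively, assume $(n^{(k-1)}_l)_l$ has been constructed. By the preceding lemma, $\{Q_{J_k}^{(n)}:\,n\ge m_k\}$ is tight on $(X^2)^{|J_k|}$; restricting to the current subsequence and applying Prokhorov's theorem yields a further subsequence $(n^{(k)}_l)_l\subset(n^{(k-1)}_l)_l$ along which $Q_{J_k}^{(n^{(k)}_l)}$ converges weakly to some probability measure $Q_{J_k}^\alpha\in\Pr((X^2)^{|J_k|})$. The diagonal $n_l:=n^{(l)}_l$ then has the property that $(n_l)_{l\ge k}$ is a subsequence of $(n^{(k)}_l)_l$ for every $k$, so $Q_{J_k}^{(n_l)}\to Q_{J_k}^\alpha$ weakly as $l\to\infty$, for every finite $J_k\subset\D$ simultaneously.

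It remains to verify that the family $\{Q_J^\alpha\}$ is projective and has the stated marginals. Let $J'\subset J$ be finite subsets of $\D$, both contained in some $\D_m$, and let $\pi\colon(X^2)^{|J|}\to(X^2)^{|J'|}$ be the canonical coordinate projection, which is continuous. For $n\ge m$ the consistency of $\{Q_J^{(n)}:\,J\text{ finite}\subset\D_n\}$ noted in the text gives $\pi_\ast Q_J^{(n)}=Q_{J'}^{(n)}$; taking $n=n_l$ and passing to the weak limit under the continuous map $\pi$ yields $\pi_\ast Q_J^\alpha=Q_{J'}^\alpha$, which is projectivity. For part (ii), the coordinate map $\overrightarrow{e_1}\colon(X^2)^{|J|}\to X^{|J|}$ is likewise continuous, and by Remark \ref{margi} one has $(\overrightarrow{e_1})_\ast Q_J^{(n)}=P_J^{\alpha_1}$ for every $n$; passing to the limit along $(n_l)$ gives $(\overrightarrow{e_1})_\ast Q_J^\alpha=P_J^{\alpha_1}$, and the same argument with $\overrightarrow{e_2}$ and $\alpha_2$ produces the second marginal identity.

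The only genuine subtlety is extracting a \emph{single} subsequence $(n_l)$ that simultaneously realizes the weak limit for every finite $J\subset\D$; this is exactly what the Cantor diagonal argument over a countable enumeration of finite subsets accomplishes. Everything else — consistency and the two marginal identities — is a formal consequence of the fact that pushforward under continuous maps commutes with weak convergence, applied to the coordinate projections $\pi$, $\overrightarrow{e_1}$, $\overrightarrow{e_2}$.
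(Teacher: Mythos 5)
Your proof is correct and follows exactly the route of the paper: tightness plus Prokhorov and a Cantor diagonal over the countably many finite subsets of $\D$ for part (i), with consistency and the marginal identities in part (ii) obtained by passing pushforwards under the continuous coordinate projections to the weak limit. You merely spell out the details that the paper's own (very terse) proof leaves implicit.
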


\begin{proof}
(i) For fixed  $J$ the existence of a converging subsequence $Q_J^{(n_l)}, l\in\N$, follows from the tightness result of the previous lemma.
A diagonal sequence argument allows to choose this subsequence jointly for all finite $J\subset\D$.
The consistency condition is preserved under convergence.

(ii) is an immediate consequence of Remark \ref{margi}.
\end{proof}

The previous proposition together with Kolmogorov's extension theorem yields

\begin{corollary}
There exists a probability measure $\QQ^\alpha_\D$ on $(X^2)^\D$ such that for all finite $J\subset\D$
\begin{equation*}(\pi_J)_* \QQ^\alpha_\D=Q_J^\alpha.\end{equation*}
Moreover,
\begin{equation*}\left(\overrightarrow{e_1}\right)_*\QQ^\alpha_\D=\PP^{\alpha_1}_\D\qquad\left(\overrightarrow{e_2}\right)_*\QQ^\alpha_\D=\PP^{\alpha_2}_\D.\end{equation*}
\end{corollary}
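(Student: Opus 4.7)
The plan is to apply Kolmogorov's extension theorem to the projective family $\{Q_J^\alpha:J\text{ finite}\subset\D\}$ produced in the previous Proposition. The set $\D$ is countable and $X^2$ is a Polish space (since $X$ is complete and separable), so every Borel probability measure on $(X^2)^{|J|}$ is Radon. Consequently the standard version of Kolmogorov's extension theorem (as formulated, e.g., in Bauer) applies as soon as one checks that the family $\{Q_J^\alpha\}$ is consistent in the sense that $(\pi_{J,J'})_* Q_{J'}^\alpha = Q_J^\alpha$ for any two finite sets $J\subset J'\subset\D$, where $\pi_{J,J'}$ is the natural coordinate projection.

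First I would verify this consistency. Choose $m$ with $J\subset J'\subset\D_m$. For each $n\ge m$ the finite-dimensional measures $Q_J^{(n)}$ and $Q_{J'}^{(n)}$ are built from the semigroup $\big(q^{(n)}_t\big)_{t\in\D_n}$ by the same iterated-kernel formula, hence $(\pi_{J,J'})_* Q_{J'}^{(n)}=Q_J^{(n)}$ by Fubini. The pushforward map $(\pi_{J,J'})_*$ is continuous with respect to weak convergence on Polish spaces, so passing to the limit along the subsequence $(n_l)$ furnished by part (i) of the Proposition yields $(\pi_{J,J'})_*Q_{J'}^\alpha = Q_J^\alpha$. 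Kolmogorov's theorem then produces the desired $\QQ_\D^\alpha$ on $\big((X^2)^\D,\mathcal B(X^2)^\D\big)$ with $(\pi_J)_*\QQ_\D^\alpha = Q_J^\alpha$ for every finite $J\subset\D$.

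For the marginal identities, observe that the map $\overrightarrow{e_1}\colon (X^2)^\D\to X^\D$ acts pointwise, so that for any finite $J\subset\D$ the coordinate projection on $X^\D$ satisfies $\pi_J\circ\overrightarrow{e_1}=\overrightarrow{e_1}\circ\pi_J$ (with the latter $\overrightarrow{e_1}$ acting on $(X^2)^{|J|}$). Therefore
\begin{equation*}
(\pi_J)_*\big(\overrightarrow{e_1}\big)_*\QQ_\D^\alpha
=\big(\overrightarrow{e_1}\big)_*(\pi_J)_*\QQ_\D^\alpha
=\big(\overrightarrow{e_1}\big)_*Q_J^\alpha
=P_J^{\alpha_1},
\end{equation*}
where the last equality is part (ii) of the Proposition. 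Since $\big(\overrightarrow{e_1}\big)_*\QQ_\D^\alpha$ and $\PP_\D^{\alpha_1}$ are two probability measures on the Polish space $X^\D$ whose finite-dimensional marginals agree on every finite $J\subset\D$, the uniqueness in Kolmogorov's theorem forces $\big(\overrightarrow{e_1}\big)_*\QQ_\D^\alpha=\PP_\D^{\alpha_1}$. The identical argument for the second coordinate gives $\big(\overrightarrow{e_2}\big)_*\QQ_\D^\alpha=\PP_\D^{\alpha_2}$.

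The only real obstacle is checking consistency of the limit family, since consistency for the pre-limit family $\{Q_J^{(n)}:J\subset\D_n\}$ is built into its definition but must be transported through the weak limit; this is resolved by the continuity of coordinate projections noted above. Everything else is routine bookkeeping on top of the previous Proposition and Kolmogorov's extension theorem.
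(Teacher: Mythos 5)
Your argument is correct and is essentially the paper's: the corollary is stated there as an immediate consequence of the preceding Proposition together with Kolmogorov's extension theorem, which is exactly what you carry out (your consistency check for the limit family merely re-proves part (i) of the Proposition, where the paper already notes that consistency is preserved under weak convergence). The derivation of the marginal identities from uniqueness of the Kolmogorov extension is the intended routine step.
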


Now let $\pi_t=(\pi^1_t,\pi^2_t): \ (X^2)^\D\to X^2,\ \omega\mapsto (\omega_1(t),\omega_2(t))$ be the coordinate process. Then under the measure $\QQ^\alpha_\D$
\begin{itemize}
\item
$(\pi^1_{t/2})_{t\in\D}$ is a Brownian motion on $X$ (restricted to dyadic time instances) with initial distribution $\alpha_1$;
\item
$(\pi^2_{t/2})_{t\in\D}$ is a Brownian motion on $X$ (restricted to dyadic time instances) with initial distribution $\alpha_2$.
\end{itemize}
It follows that under $\QQ^\alpha_\D$ the process $(\pi_t^1)_{t\in\D}$ has the local uniform continuity property (63.6) from \cite{Bauer}.
The same is true for the process $(\pi_t^2)_{t\in\D}$. Hence, also the joint process $\pi_t=(\pi^1_t,\pi_t^2), t\in\D,$ satisfies an analogous property.
Thus for $\QQ^\alpha_\D$-a.e. $\omega$
the limit
\begin{equation*}B_t=\lim_{s\to t, s\in\D} \pi_{s/2}\end{equation*}
exists for each $t\in\R_+$, it coincides with $\pi_{t/2}$ for each $t\in\D$ and  the trajectory $t\mapsto B_t(\omega)$ is continuous
(\cite{Bauer}, Lemma 63.5).

\begin{theorem}
Given any initial distribution $\alpha$ on $X^2$ then the stochastic process $(B_t)_{t\in\R_+}$ with values in $X^2$, constructed as above on the probability space $((X^2)^\D, \B(X^2)^\D, \QQ^\alpha_\D)$,
is a coupling of two Brownian motions $(B_t^1)_{t\in\R_+}$ and $(B_t^2)_{t\in\R_+}$ with values in $X$ and with initial distributions $\alpha_1$ and $\alpha_2$, resp., and it satisfies
\begin{equation}\label{pathwise}d(B^1_{s+t},B^2_{s+t})\le e^{-Kt/2} \cdot d(B^1_s,B^2_s) \qquad (\forall s,t\ge0)\end{equation}
for $\QQ^\alpha_\D$-a.e. path.
\end{theorem}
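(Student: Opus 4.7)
The argument splits into two essentially independent parts: identifying the coordinate marginals as genuine Brownian motions, and establishing the pathwise contraction \eqref{pathwise}.

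The marginal identification is nearly automatic from the material already assembled. By the corollary preceding the theorem, $(\overrightarrow{e_1})_*\QQ^\alpha_\D=\PP^{\alpha_1}_\D$ and $(\overrightarrow{e_2})_*\QQ^\alpha_\D=\PP^{\alpha_2}_\D$, so under $\QQ^\alpha_\D$ each coordinate process $(\pi^i_{t})_{t\in\D}$ has the finite-dimensional distributions of the dyadic-time heat-semigroup process starting from $\alpha_i$. The construction of Section 2.1 then identifies the limit $B^i_t=\lim_{s\to t,\,s\in\D}\pi^i_{s/2}$ as a Brownian motion on $X$ with initial distribution $\alpha_i$.

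For the contraction I first treat dyadic pairs. Fix $t_1\le t_2$ in some $\D_m$ and set
\[
A=\bigl\{((x_1,y_1),(x_2,y_2))\in (X^2)^2 :\ d(x_2,y_2)\le e^{-K(t_2-t_1)}\, d(x_1,y_1)\bigr\},
\]
which is closed in $(X^2)^2$ by continuity of $d$. For every $n\ge m$, property (ii) of the discrete-time semigroup lemma puts all the mass of $q^{(n)}_{t_2-t_1}((x_1,y_1),\cdot)$ on the slice $\{(x_2,y_2):d(x_2,y_2)\le e^{-K(t_2-t_1)}d(x_1,y_1)\}$, so integrating successively against $q^{(n)}_{t_1}((x_0,y_0),\cdot)$ and $\alpha$ yields $Q^{(n)}_{\{t_1,t_2\}}(A)=1$. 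Since $Q^{(n_l)}_{\{t_1,t_2\}}\to Q^\alpha_{\{t_1,t_2\}}$ weakly and $A$ is closed, the portmanteau inequality gives $Q^\alpha_{\{t_1,t_2\}}(A)\ge\limsup_l Q^{(n_l)}_{\{t_1,t_2\}}(A)=1$. A countable union over pairs in $\D\times\D$ then produces one $\QQ^\alpha_\D$-null set off which $d(\pi^1_{t_2},\pi^2_{t_2})\le e^{-K(t_2-t_1)}d(\pi^1_{t_1},\pi^2_{t_1})$ holds simultaneously for all dyadic $t_1\le t_2$; substituting $B^i_t=\pi^i_{t/2}$ and using $2\D=\D$ gives \eqref{pathwise} whenever $s,s+t\in\D$.

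Extension to arbitrary $s,t\ge0$ is then a straightforward continuity argument: on the $\QQ^\alpha_\D$-full set where both $B^1$ and $B^2$ have continuous trajectories, approximate $s$ and $s+t$ by dyadic sequences $r_n\to s$ and $u_n\to s+t$ with $r_n\le u_n$, apply the dyadic inequality to each pair, and pass to the limit using continuity of the paths and of $d$. The main obstacle is the dyadic step: the discrete-semigroup lemma only delivers a pointwise (in the starting configuration) contraction for the universally measurable kernels $q^{(n)}$, and one must upgrade this to a statement about the accumulation point $Q^\alpha_{\{t_1,t_2\}}$ of a subsequence that was singled out only for weak convergence. The decisive observation is the closedness of $A$ in the joint variable, which through the portmanteau inequality for closed sets transfers mass $1$ to the weak limit.
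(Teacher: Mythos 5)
Your argument is correct and follows the route the paper intends: the paper in fact states this theorem without a written proof, relying on the construction immediately preceding it, where the marginal identification $(\overrightarrow{e_1})_*\QQ^\alpha_\D=\PP^{\alpha_1}_\D$, $(\overrightarrow{e_2})_*\QQ^\alpha_\D=\PP^{\alpha_2}_\D$ and the existence and continuity of the limit process $B_t=\lim_{s\to t,\,s\in\D}\pi_{s/2}$ are already established. The one step the paper leaves entirely implicit is the transfer of the almost-sure contraction from the approximating laws $Q_J^{(n)}$ (where it holds by property (ii) of the discrete-time kernels) to the weak limit $Q_J^\alpha$, and your portmanteau argument on the closed set $A$, followed by the countable union over dyadic pairs and the path-continuity extension to arbitrary $s,t\ge 0$, supplies exactly the missing detail in the intended way.
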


\begin{remark}
The construction of coupled Brownian motions satisfying the pathwise estimate (\ref{pathwise}) is well-known in the case of smooth Riemannian manifold, cf.
\cite{Wa} and references therein. In this case, it is most naturally constructed   using the Brownian motion on the frame bundle and estimates for the
stochastic parallel transport.
\end{remark}

\section{Variable Curvature Bounds}

For the remaining discussions, let us assume that $(X,d,m)$ is a mms which satisfies all the previous requirements (complete separable length space, full support, integrability condition (\ref{gaussian}), infinitesimally Hilbertian, condition C) and which satisfies one of the equivalent properties of Theorem \ref{fund} for some (large negative) $\KK\in\R$. In addition to this uniform bound $\KK$ we want to impose another variable curvature bound $\k$.
Here and in the sequel $\k$ will denote a lower semicontinuous function on $X$, bounded from below by the number $\KK$ and locally $m$-integrable.

\subsection{Bochner Inequality and Gradient Estimates}

Given such a function $\k$ we define the \emph{Schr\"odinger semigroup} $(T_t^{2\k})_{t\ge0}$ as the strongly continuous semigroup of operators on $L^2(X,m)$ with generator $\Delta -2\k$. (If $\k$ is locally bounded, this operator can be understood in any of the possible ways.
If $\k$ is merely $L^1_{loc}$ and bounded from below, the operator should be regarded as the \emph{form sum} of the Laplacian $\Delta$ and the multiplication operator $-2\k$.) See e.g. \cite{StoVoi}.
An explicit representation of this semigroup is given in terms of Brownian motion on $X$ by means of the Feynman-Kac formula:
\begin{equation*}T_t^{2\k}u(x)=\EE_x\left[e^{-\int_0^{2t} \k(B_s)ds}\cdot u(B_{2t})\right]\end{equation*}
for $m$-a.e. $x\in X$, each $t>0$ and each bounded Borel measurable $L^2$-function $u$ on $X$.
To see the strong continuity in $L^p$ note that $\int_0^t\k(B_s)ds\to0$ as $t\to0$ $\PP^x$-a.s. for $m$-a.e. $x\in X$ since $\k$ is locally integrable.
Thus  $M_t=\exp(-\int_0^{2t}\k(B_s)ds)$ is uniformly bounded from above by $e^{2|\KK|t}$ and a.s. continuous at $t=0$. Lebesgue's dominated convergence theorem therefore implies for each Borel $f\in L^p(X,m)$
\begin{eqnarray*}
\int_X \Big|T^{2\k}_tf- T_tf\Big|^p\,dm&\le&
\int_X \EE_x\left[\big| M_t-1\big|\cdot f(B_{2t})\right]^p\,dm(x)\\
&\le&
\int_X  \left(\int_X \EE_x\left[\big| M_t-1\big|^{p'}\right]^{p/p'}\,p_t(y,dx)\right)\cdot \big| f\big|^p(y)\,dm(y)\\
&\to&0
\end{eqnarray*}
as $t\to0$. Since in addition we know $\int_X \Big|T_tf-f\Big|^p\,dm\to0$ this proves the strong continuity in $L^p$.

\begin{theorem} For any mms $(X,d,m)$ as above, the following properties are equivalent
\begin{itemize}

\item[\bf(iv')] $L^2$-gradient estimate: $\forall u\in\Dom(\E)$ and $\forall t>0$
\begin{equation}\label{k-grad}
\Gamma(T_tu)\le T_t^{2\k}(\Gamma(u)).
\end{equation}
\item[\bf(v')]  Bochner's inequality or Bakry-Emery condition $\BE(\k,\infty)$:
 $\forall u\in\Dom(\Delta)$ with $\Delta u\in\Dom(\E)$
and $\forall \phi\in\Dom({\Delta})\cap L^\infty(X,m)$ with $\phi\ge0$, ${\Delta}\phi\in L^\infty(X,m)$
\begin{equation}\label{bochner2}
\int \left(\frac12\Delta -\k\right)\, \phi\cdot \Gamma(u)\,dm \ge\int \phi \cdot \Gamma(u,\Delta u)\,dm.
\end{equation}
\end{itemize}
Recall that according to Cor. 1  the square field operator $\Gamma(u)$ of a function $u\in\Dom(\E)$ coincides with the squared minimal weak upper gradient $|Du|_w^2$.
\end{theorem}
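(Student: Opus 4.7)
The plan is to adapt the Bakry-Emery interpolation argument, in the form used by AGS-BE for constant $\KK$, to the variable case by letting the Schr\"odinger semigroup $T_s^{2\k}$ play the role of the scalar factor $e^{-2\KK s}$. Concretely, for $u$ sufficiently regular and $t>0$ fixed, consider the curve
\begin{equation*}
F(s) \;:=\; T_s^{2\k}\bigl(\Gamma(T_{t-s}u)\bigr), \qquad s\in[0,t],
\end{equation*}
whose endpoints are $F(0)=\Gamma(T_tu)$ and $F(t)=T_t^{2\k}(\Gamma(u))$. The gradient estimate (iv') is precisely $F(0)\le F(t)$, so it will follow once we show that $F$ is non-decreasing. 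The converse (iv')$\Rightarrow$(v') will come from testing (iv') against $\phi\ge0$ and differentiating at $t=0$.

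For (v')$\Rightarrow$(iv'), I would first fix a non-negative test function $\phi$ in the class appearing in (v'), then use self-adjointness of the Schr\"odinger semigroup (from the Feynman-Kac representation and self-adjointness of $\Delta-2\k$) to write
\begin{equation*}
\int \phi\cdot F(s)\,dm \;=\; \int T_s^{2\k}(\phi)\cdot \Gamma(T_{t-s}u)\,dm.
\end{equation*}
Writing $\phi_s:=T_s^{2\k}\phi$ and $v_s:=T_{t-s}u$, the usual product/chain rule (using $\partial_s\phi_s=(\Delta-2\k)\phi_s$ and $\partial_s\Gamma(v_s)=-2\Gamma(v_s,\Delta v_s)$) gives
\begin{equation*}
\frac{d}{ds}\!\int \phi\cdot F(s)\,dm \;=\; \int \bigl[(\Delta-2\k)\phi_s\bigr]\,\Gamma(v_s)\,dm \;-\; 2\!\int \phi_s\,\Gamma(v_s,\Delta v_s)\,dm.
\end{equation*}
The key observation is that $\phi_s\ge 0$ (again by Feynman-Kac), so applying (v') with test function $\phi_s$ and integrand $v_s$ shows the right-hand side is $\ge 0$. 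Integrating $s$ over $[0,t]$ and using arbitrariness of $\phi\ge0$ yields $\Gamma(T_tu)\le T_t^{2\k}\Gamma(u)$ $m$-a.e.

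For the converse (iv')$\Rightarrow$(v'), I would take $\phi$ in the test class of (v'), pair (iv') with $\phi$, subtract $\int\phi\,\Gamma(u)\,dm$ from both sides, divide by $t$, and let $t\to 0^+$. On the left, self-adjointness of $T_t^{2\k}$ transfers the semigroup to $\phi$ and yields $\int (\Delta-2\k)\phi\cdot\Gamma(u)\,dm$ (since $\phi\in\Dom(\Delta)$); on the right, differentiability of $t\mapsto \Gamma(T_tu)$ at $t=0$ (guaranteed by $u\in\Dom(\Delta)$, $\Delta u\in\Dom(\E)$) produces $2\int \phi\,\Gamma(u,\Delta u)\,dm$. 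Dividing by $2$ gives exactly (\ref{bochner2}).

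The main technical obstacles are two. First, one must justify the above time-derivative computations with only the regularity the ambient framework provides; in particular, the integrand $\Gamma(v_s)$ and the quantity $\Gamma(v_s,\Delta v_s)$ must be handled as elements of $L^1$, using that $T_{t-s}u$ enjoys improved regularity for $s<t$ and applying a limit argument as $s\uparrow t$ or $s\downarrow 0$. Second, one must cope with $\k$ being only lower semicontinuous and $L^1_{loc}$ rather than bounded: I would first prove the equivalence for bounded continuous approximants $\k_n$, exploiting the uniform Feynman-Kac bound $\|T_s^{2\k_n}\|\le e^{2|\KK|s}$ and positivity preservation, and then pass to the limit via monotone/dominated convergence (using lower semicontinuity of $\k$ and the global lower bound $\KK$ to choose $\k_n\downarrow\k$ or $\k_n\uparrow\k$ as appropriate for each inequality). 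These approximation and regularity steps are where the bulk of the technical work sits; the algebraic skeleton is exactly the variable-curvature analogue of the Bakry argument.
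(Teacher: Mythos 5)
Your proposal is correct and follows essentially the same interpolation argument as the paper: both consider $f(s)=\int T_s^{2\k}\phi\cdot\Gamma(T_{t-s}u)\,dm$, show $f'(s)\ge0$ under (v') using positivity of $T_s^{2\k}\phi$, and recover (v') from (iv') by differentiating at $s=0$ and letting $t\to0$. The additional technical care you flag (regularity of $\Gamma(v_s,\Delta v_s)$, approximation of the merely lower semicontinuous $\k$) is exactly what the paper delegates to the references \cite{AGS-BE} and \cite{EKS}.
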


\begin{proof}
The proof follows the argumentation for constant curvature bounds.
Put \begin{equation*}f(s)=\int T_s^{2\k}\phi \cdot \Gamma(T_{t-s}u)\,dm=\int \phi \cdot T_s^{2\k} \Gamma(T_{t-s}u)\,dm\end{equation*}
and assume (v').
Then
\begin{eqnarray*}
f'(s)&=& \int (\Delta-2\k)T_s^{2\k}\phi\cdot \Gamma(T_{t-s}u)\,dm-
2\int T_s^{2\k}\phi\cdot \Gamma(T_{t-s}u, \Delta T_{t-s}u))\,dm\\
&\ge&0.
\end{eqnarray*}
(The regularity issues which guarantee that these calculations are applicable have been  discussed in detail in \cite{AGS-BE} and \cite{EKS}.)
Thus $f(t)\ge f(0)$. This is the claim in (iv'). Conversely, assume (iv'). Then $f(s)\ge f(0)$ for all $s>0$. Thus $f'(0)\ge0$.
That is,
\begin{equation*} \int (\Delta-2\k)\phi\cdot \Gamma(T_{t}u)\,dm-
2\int \phi\cdot \Gamma(T_{t}u, \Delta T_{t}u))\,dm\ge0\end{equation*}
for all $t>0$. By density (or passing to $t\to0$) this yields (v').
\end{proof}

\subsection{Curvature-Dimension~Condition~and~Evolution-Variation~Inequality}
Let us introduce the adaption of the $\CD(\KK,\infty)$-condition to non-constant curvature bounds.

\begin{definition}
We say that $(X,d,m)$ satisfies the \emph{curvature-dimension condition} $\CD(\k,\infty)$ with the curvature bound $\k:X\to\R$ if for every $\mu_0,\mu_1\in\Pr_2(X)\cap\Dom(\Ent)$ there exists
a geodesic $(\mu_t)_{t\in[0,1]}$ in $\Pr_2(X)$ connecting them and a probability measure $\Theta\in \Gamma(X)$ such that
$\mu_t=(e_t)_*\Theta$
and
\begin{equation}\label{k-conv}
\Ent(\mu_t)\le(1-t)\,\Ent(\mu_0)+t\,\Ent(\mu_1)-\int_0^1\int_{\Gamma(X)} g(s,t)\cdot \k(\gamma_s)\cdot |\dot\gamma|^2\,d\Theta(\gamma)\,ds
\end{equation}
for all $t\in[0,1]$. Here $e_t:\gamma\mapsto\gamma(t)$ denotes the evaluation map, $\Gamma(X)$ denotes the space of (constant speed, minimizing) geodesics in $X$ parametrized by $[0,1]$ and $|\dot\gamma|$ denotes the speed of such a geodesic. Moreover, $g(s,t)=\min\{s(1-t),t(1-s)\}$ is the Green function of the unit interval.
\end{definition}

Note that for constant $\k=\KK$
\begin{equation*}\int_0^1\int_{\Gamma(X)} g(s,t)\cdot \k(\gamma_s)\cdot |\dot\gamma|^2\,d\Theta(\gamma)\,ds=
K\cdot \frac{t(1-t)}2\cdot W^2_2(\mu_0,\mu_1).\end{equation*}

\begin{definition}
We say that $(X,d,m)$ satisfies the \emph{evolution-variation inequality} $\EVI_\k$ with the curvature bound $\k:X\to\R$ if for every  $\mu_0\in\Pr_2(X)$ there exists a curve $(\mu_t)_{t>0}$ in $\Dom(\Ent)$ with $\lim_{t\to0}\mu_t=\mu_0$ and for each $t>0$ and each $\nu\in\Pr_2(X)$ there exits a probability measure $\Theta_t$ on $\Gamma(X)$ such that $(e_s)_*\Theta_t$ for $s\in[0,1]$ defines a geodesic in $\Pr_2(X)$ connecting $\mu_t$ and $\nu$ and such that
    \begin{equation}\label{evi-k}\frac{d^+}{dt}\frac12 W_2^2(\mu_t,\nu)+
    \int_0^1 \int_{\Gamma(X)}(1-s)\,\k(\gamma_s)\,|\dot\gamma|^2\,d\Theta_t(\gamma)\,ds
    \le\Ent(\nu)-\Ent(\mu_t).
    \end{equation}
\end{definition}

Note that for both (\ref{k-conv}) and (\ref{evi-k}) the measures $\Theta$ and $\Theta_t$ are indeed unique \cite{RaSt}, cf. Cor. 1, provided one restricts the discussion to absolutely continuous measures $\mu_0,\mu_1$ or $\mu_t,\nu$, resp.

\begin{theorem}\label{cd-evi}
 For any mms $(X,d,m)$ as above, the following properties are equivalent
\begin{itemize}

\item[\bf(i')]
The curvature-dimension condition $\CD(\k,\infty)$;

\item[\bf(ii')]
The evolution-variation inequality $\EVI_\k$.

\end{itemize}
%Moreover, any of these properties implies that the $\CD(\k,\infty)$-inequality (\ref{k-conv}) holds for \emph{every} geodesic  in $\Pr_2(X)$
%and that the evolution-variation inequality (\ref{evi-k}) holds for \emph{every} $\Theta_t\in\Pr(\Gamma(X))$ representing a geodesic from $\mu_t$ to %$\nu$.
\end{theorem}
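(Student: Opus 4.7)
The plan is to treat the two implications separately, in both cases adapting the scheme of the constant-curvature case \cite{AGS-BE, RaSt}, with the term $\tfrac{\KK}{2}t(1-t)W_2^2(\mu_0,\mu_1)$ replaced throughout by the weighted curvature action
\[
\Psi_t(\Theta) := \int_0^1\int_{\Gamma(X)} g(s,t)\,\k(\gamma_s)\,|\dot\gamma|^2\,d\Theta(\gamma)\,ds.
\]
The standing $\CD(\KK,\infty)$ assumption is used throughout to guarantee uniqueness of Wasserstein geodesics between absolutely continuous measures (Cor.\ 1) and the existence of the heat flow $P_t$, which is simultaneously the $L^2$-gradient flow of $\E$ and the $W_2$-gradient flow of $\Ent$.

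For (ii') $\Rightarrow$ (i'), I would adapt the Daneri--Savar\'e argument. Fix absolutely continuous $\mu_0,\mu_1\in\Dom(\Ent)$ and let $(\mu_r)_{r\in[0,1]}$ be the unique Wasserstein geodesic represented by some $\Theta\in\Pr(\Gamma(X))$. For each $r\in[0,1]$, apply the $\EVI_\k$-inequality to the curve $\sigma_r^t:=P_t\mu_r$ with test measures $\nu=\mu_0$ and $\nu=\mu_1$ in turn. By uniqueness of geodesics, the geodesic from $\mu_r$ to $\mu_i$ ($i=0,1$) is the restriction/reparametrization of $\Theta$, so its curvature integral is the natural piece of $\Psi$. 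Sending $t\downarrow 0$ (using lower semicontinuity of $\Ent$ and continuity of $\sigma_r^t\to\mu_r$ in $W_2$) and combining the resulting inequalities with weights $(1-r)$ and $r$, a short computation involving the Green function yields the $\CD(\k,\infty)$ inequality at $r$.

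For (i') $\Rightarrow$ (ii'), take $\mu_0\in\Pr_2(X)\cap\Dom(\Ent)$ and set $\mu_t:=P_t\mu_0$. For fixed $t>0$ and $\nu\in\Pr_2(X)\cap\Dom(\Ent)$, let $\Theta_t$ represent the unique geodesic $(\gamma_s)_{s\in[0,1]}$ from $\mu_t$ to $\nu$. Dividing the $\CD(\k,\infty)$ inequality for this geodesic by $s$ and letting $s\downarrow 0$ gives the angle condition
\[
\limsup_{s\downarrow 0}\frac{\Ent(\gamma_s)-\Ent(\mu_t)}{s}\le\Ent(\nu)-\Ent(\mu_t)-\int_0^1\int_{\Gamma(X)}(1-s)\,\k(\gamma_s)|\dot\gamma|^2 d\Theta_t(\gamma)\,ds.
\]
Since $\mu_t$ is the $W_2$-gradient flow of $\Ent$, the general identity of \cite{AGS-Calc} provides the matching slope bound $\frac{d^+}{dt}\tfrac12 W_2^2(\mu_t,\nu)\le -\liminf_{s\downarrow 0}[\Ent(\gamma_s)-\Ent(\mu_t)]/s$, and combining these gives $\EVI_\k$ at $\mu_t$ with $\Theta_t$ as the required geodesic. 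The case of arbitrary $\nu\in\Pr_2(X)$ follows by approximation.

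The main obstacle is the passage to limits in the action integral $\Psi_t(\Theta)$. Because $\k$ is only lower semicontinuous and the integrand depends on measurable selections of geodesics, narrow convergence $\Theta_n\to\Theta$ does not immediately yield convergence of $\Psi_t(\Theta_n)$. My strategy is to approximate $\k$ from below by bounded continuous functions $\k_n\uparrow \k$, establish the equivalence first for each $\k_n$, and pass to the limit using monotone convergence together with stability of optimal $W_2$-geodesics under the assumed $\CD(\KK,\infty)$. A secondary but necessary issue is measurable $t$-dependence of $\Theta_t$ in the second direction, for which a standard selection theorem (cf.\ \cite{Boga}, Thm.\ 6.9.2, applied as for $q_t^*$ in Section 2) suffices.
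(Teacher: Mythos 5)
Your proposal follows essentially the same route as the paper: (i')$\Rightarrow$(ii') by differentiating the $\CD(\k,\infty)$ inequality along the geodesic from $\mu_t$ to $\nu$ and combining with the inequality $\frac{d^+}{dt}\frac12 W_2^2(\mu_t,\nu)\le\frac{d^+}{dr}\big|_{r=0}\Ent(\eta_r)$, and (ii')$\Rightarrow$(i') via the weighted combination of the two $\EVI_\k$ inequalities at $\nu=\mu_0$ and $\nu=\mu_1$ followed by the limit $t\to0$ using lower semicontinuity. The only differences are presentational: where you invoke uniqueness to identify the limiting half-geodesic plans with restrictions of a single $\Theta$, the paper explicitly glues $\Theta^0$ and $\Theta^1$ by disintegration over $\mu_r$ and verifies the glued plan is supported on genuine geodesics, and it obtains lower semicontinuity of the action functional on $\Pr(\Gamma(X))$ directly from the lower semicontinuity of $\k$ rather than through your monotone approximation by continuous $\k_n$.
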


\begin{proof}
``(i')$\Longrightarrow$(ii')'':
Let us first note that for each geodesic $(\eta_r)_{r\in[0,1]}$ with the property (\ref{k-conv}) a straightforward calculation yields
\begin{equation}\label{ent-slope}
\frac{d^+}{dr}\Big|_{r=0}\Ent(\eta_r)\le \Ent(\eta_1)-\Ent(\eta_0)-\int_0^1\int_{\Gamma(X)} (1-s)\cdot \k(\gamma_s)\cdot |\dot\gamma|^2\,d\Theta(\gamma)\,ds.
\end{equation}
We apply this to the geodesic connecting $\mu_t=\eta_0$ and $\nu=\eta_1$.
Thanks to  \cite{AGMR}, Thm 6.3 and Prop. 6.6 we already know
\begin{equation*}\frac{d^+}{dt}\frac12 W_2^2(\mu_t,\nu)\le \frac{d^+}{dr}\Big|_{r=0}\Ent(\eta_r).\end{equation*}
(Originally, this was proven only for 'good' geodesics. However, according to \cite{RaSt} geodesics in $\Pr_2(X)$ are unique and thus 'good' - provided their endpoints are absolutely continuous measures.)
Combining this with the previous estimate yields the claim.

``(ii')$\Longrightarrow$(i')'':
Let an arbitrary geodesic $(\mu_r)_{r\in[0,1]}$ be given, fix $r\in(0,1)$,
and let  $(\mu_r^t)_{t>0}$ be the solution to the $\EVI_\k$-flow starting at $\mu_r$.
Obviously, any $\EVI_\k$-flow is also an $\EVI_\KK$-flow for any constant $\KK\le\k$ and thus coincides with the heat flow $(P_t\mu_r)_{t>0}$ starting in $\mu_r$. Now apply the $\EVI_\k$-inequality (\ref{evi-k})
to $\mu_r^t$ and $\nu=\mu_0$ (or $\nu=\mu_1$, resp.).
Then
  \begin{equation}\label{eins}\frac{d^+}{dt}\frac12 W_2^2(\mu_r^t,\mu_0)+
    \int_0^1 \int_{\Gamma(X)}(1-s)\,\k(\gamma_s)\,|\dot\gamma|^2\,d\Theta_t^0(\gamma)\,ds
    \le\Ent(\mu_0)-\Ent(\mu_r^t)
    \end{equation}
and
  \begin{equation}\label{zwei}\frac{d^+}{dt}\frac12 W_2^2(\mu_r^t,\mu_1)+
    \int_0^1 \int_{\Gamma(X)}(1-s)\,\k(\gamma_s)\,|\dot\gamma|^2\,d\Theta_t^1(\gamma)\,ds
    \le\Ent(\mu_1)-\Ent(\mu_r^t)
    \end{equation}
    where $\Theta_t^0$ and $\Theta_t^1$ denote probability measures on $\Gamma(X)$ such that
    $(e_s)_*\Theta_t^0, s\in[0,1]$  and $(e_s)_*\Theta_t^1, s\in[0,1]$ are geodesics connecting $\mu_r^t$ with  $\mu_0$ or $\mu_1$, resp.
  Since $(\mu_r)_{r\in[0,1]}$ is a geodesic
\begin{eqnarray*}(1-r)\,W_2^2(\mu_0,\mu_r^t) +r\,W_2^2(\mu_r^t,\mu_1)&\ge& (1-r)r\,W_2^2(\mu_0,\mu_1)\\
&=&(1-r)\,W_2^2(\mu_0,\mu_r)+r\,W_2^2(\mu_r,\mu_1)
\end{eqnarray*}
for all $t\ge0$ and thus
\begin{equation*}\label{both}(1-r)\cdot\frac{d^+}{dt}\frac12 W_2^2(\mu_r^t,\mu_0)+r\cdot\frac{d^+}{dt}\frac12 W_2^2(\mu_r^t,\mu_1)\ge0.\end{equation*}
Adding up (\ref{eins}) -- multiplied by $(1-r)$ --  and (\ref{zwei}) -- multiplied by $r$ -- and taking into account (\ref{both}) therefore yields
 \begin{eqnarray*}
 \lefteqn{(1-r) \Ent(\mu_0)+r\Ent(\mu_1)-\Ent(\mu_r^t)}\\
 & \ge&
    (1-r)\int_0^1 \int_{\Gamma(X)}(1-s)\,\k(\gamma_s)\,|\dot\gamma|^2\,d\Theta_t^0(\gamma)\,ds\\
  &&  +r \int_0^1 \int_{\Gamma(X)}(1-s)\,\k(\gamma_s)\,|\dot\gamma|^2\,d\Theta_t^1(\gamma)\,ds.
      \end{eqnarray*}
Now let us consider the limit $t\to0$. Lower semicontinuity of $\Ent$ allows to pass to the limit on the LHS of the previous estimate.
Passing to the limit on the RHS is justified by the fact that
\begin{equation*}\Theta\mapsto \int_0^1\int_{\Gamma(X)}(1-s)\,\k(\gamma_s)\,|\dot\gamma|^2\,d\Theta(\gamma)\,ds\end{equation*}
is a lower semicontinuous function on the space of probabilty measure on $\Gamma(X)$.
Thus
 \begin{eqnarray}\label{drei}
\lefteqn{ (1-r) \Ent(\mu_0)+r\Ent(\mu_1)-\Ent(\mu_r)}\nonumber\\
 & \ge&
    (1-r)\int_0^1 \int_{\Gamma(X)}(1-s)\,\k(\gamma_s)\,|\dot\gamma|^2\,d\Theta^0(\gamma)\,ds\nonumber\\
  &&  +r \int_0^1 \int_{\Gamma(X)}(1-s)\,\k(\gamma_s)\,|\dot\gamma|^2\,d\Theta^1(\gamma)\,ds
      \end{eqnarray}
  where $\Theta^0$ and $\Theta^1$ now denote probability measures on $\Gamma(X)$ which represent geodesics in $\Pr_2(X)$ connecting
  $\mu_r$ with $\mu_0$ and $\mu_1$, resp.

  Now we will reparametrize and glue together the measures $\Theta^0$ and $\Theta^1$. First we define the measure $\hat\Theta^0:=(\Psi^0)_*\Theta^0$ on the space of geodesics $\gamma:[0,r]\to X$ and the measure $\hat\Theta^1=(\Psi^1)_*\Theta^1$ on the space of geodesics $\gamma:[r,1]\to X$ by means of the rescaling maps
   \begin{equation*}
   (\Psi^0\gamma)_s=\gamma_{1- s/r},\qquad (\Psi^1\gamma)_s=\gamma_{(s-r)/(1-r)}.
  \end{equation*}
  Note that the evaluation map at time $r$ for both measures leads to the same projection
   \begin{equation*} (e_r)_* \Theta^0=\mu_r=(e_r)_* \Theta^1.
   \end{equation*}
  Disintegration w.r.t. $\mu_r$ yields Markov kernels $\vartheta^0(y,d\gamma)$ and $\vartheta^1(y,d\gamma)$
 such that
  \begin{equation*}
 \hat\Theta^i(d\gamma)=\int_X \vartheta^i(y,d\gamma)\,\mu_r(dy)
 \end{equation*}
 for $i=0,1$.
 Let $\Gamma_r(X)$ denote the space of constant speed curves $\gamma: [0,1]\to X$  for which $\gamma^0:=\gamma\big|_{[0,r]}$ and $\gamma^1:=\gamma\big|_{[r,1]}$ are geodesics.
 Define a probability measure $\Theta$ on $\Gamma_r(X)$ by
   \begin{equation*}
 \Theta(d\gamma)=\int_X \vartheta^0(y,d\gamma^0)\,\vartheta^1(y,d\gamma^0)\,\mu_r(dy).
 \end{equation*}
 By construction, this measure $\Theta$ lives on the set of piecewise geodesic curves. Our claim, however, is that it is indeed supported by the set of geodesics.
 Let us consider the curve $\tilde\mu_s=(e_s)_*\Theta$, $s\in[0,1]$, which connects $\mu_0$ and $\mu_1$. Moreover,
 $W_2(\tilde \mu_0,\tilde \mu_r)=W_2(\mu_0, \mu_r)$ and $W_2(\tilde\mu_r,\tilde\mu_1)=W_2(\mu_r,\mu_1)$. Thus $(\tilde\mu_s)_{s\in[0,1]}$ is a geodesic in $\Pr_2(X)$ and therefore
$\Theta$ is supported by the set of geodesics in $X$.

 In terms of this probability measure $\Theta$ on $\Gamma(X)$, (\ref{drei}) can be rewritten as
 \begin{eqnarray*}
 (1-r) \Ent(\mu_0)+r\Ent(\mu_1)-\Ent(\mu_r)
 & \ge&
\int_0^1 \int_{\Gamma(X)}g(r,s)\,\k(\gamma_s)\,|\dot\gamma|^2\,d\Theta(\gamma)\,ds.
      \end{eqnarray*}
 This is the claim for a given $r\in[0,1]$. A priori the measure $\Theta$ might depend on $r$. Uniqueness of the optimal plan $\Theta$, however, implies then that the claim holds for every $r$.
\end{proof}

\subsection{From Bakry-Emery  to Curvature-Dimension}

The following 
action estimate in the spirit of \cite{AGS-BE}, Theorem 4.16, is regarded as a key ingredient for proving that the Bakry-Emery condition $\BE(\k,\infty)$ implies the curvature-dimension condition $\CD(\k,\infty)$ and the evolution-variation inequality $\EVI_\k$.

\begin{lemma} {\bf i)} Let $\rho_s=f_s\,m$, $s\in[0,1]$ be a regular curve (in the sense of \cite{AGS-BE}, Def. 4.10) and put $\rho_{s,t}=P_{st}\rho_s$.
Then for every Lipschitz function $\phi$ with bounded support and for all $t>0$
\begin{eqnarray*}
\int\phi_1\,d\rho_{1,t}-\int\phi_0\,d\rho_0&-&\int_0^1|\dot\rho_s|^2\,ds+2t\Big(\Ent(\rho_{1,t})-\Ent(\rho_0)\Big)\nonumber\\
&\le&
-\int_0^1\int_0^{st}\int_X T_r\left(2\k\cdot T_{st-r}^{2\k}\Gamma(\phi_s)\right)\,d\rho_s\,dr\,ds
\end{eqnarray*}
where $\phi_s=Q_s\phi$, $s\in[0,1]$, denote the Hamilton-Jacobi flow induced by $\phi$.

{\bf ii)} The same holds true for every geodesic $(\rho_s)_{s\in[0,1]}$ in $\Pr_2(X)$ with absolutely continuous measures in which case  $\int_0^1|\dot\rho_s|^2\,ds=W^2_2(\rho_0,\rho_1)$.
\end{lemma}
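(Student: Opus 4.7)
The plan is to adapt the proof of Theorem~4.16 in \cite{AGS-BE} to the variable-curvature setting, substituting at the decisive step the $\k$-gradient estimate (iv') for the constant-$\KK$ estimate used there. The correction term on the right-hand side will emerge naturally from a Duhamel-type representation of $T^{2\k}_t$ relative to $T_t$.

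Concretely, I would introduce the auxiliary functional
$$A(s):=\int \phi_s\,d\rho_{s,t}+2t\,\Ent(\rho_{s,t}),\qquad s\in[0,1],$$
with $\phi_s=Q_s\phi$ the Hopf--Lax flow and $\rho_{s,t}=P_{st}\rho_s=(T_{st}f_s)\,m$, and bound $\tfrac{d}{ds}A(s)$ from above. Three contributions appear: the Hopf--Lax subsolution $\partial_s\phi_s+\tfrac12\Gamma(\phi_s)\le0$; the chain rule $\tfrac{d}{ds}\rho_{s,t}=t\,\Delta\rho_{s,t}+P_{st}\dot\rho_s$, whose $t\Delta$ piece combines -- via integration by parts and self-adjointness -- with the Fisher-information part of $2t\,\tfrac{d}{ds}\Ent(\rho_{s,t})$ in the same cancellation exploited by AGS in the constant-curvature case; and the metric coupling $\int T_{st}\phi_s\,d\dot\rho_s$, estimated by Cauchy--Schwarz against the velocity field of $\rho_s$ to yield $\tfrac12|\dot\rho_s|^2+\tfrac12\int\Gamma(T_{st}\phi_s)\,d\rho_s$. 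Up to the overall scaling of $A$ needed to match the coefficients of the claim, one obtains
$$\tfrac{d}{ds}A(s)\le \tfrac12|\dot\rho_s|^2+\tfrac12\!\int\!\Gamma(T_{st}\phi_s)\,d\rho_s-\tfrac12\!\int\!\Gamma(\phi_s)\,d\rho_{s,t}.$$

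The decisive step is then to estimate the last two terms jointly: apply (iv') in the form $\Gamma(T_{st}\phi_s)\le T^{2\k}_{st}\Gamma(\phi_s)$, together with the Duhamel identity
$$T^{2\k}_{st}g-T_{st}g=-\int_0^{st}T_r\bigl(2\k\cdot T^{2\k}_{st-r}g\bigr)\,dr,$$
which follows by differentiating $r\mapsto T_r T^{2\k}_{st-r}g$ and using that the generators $\Delta$ and $\Delta-2\k$ differ by the potential $2\k$. Applied with $g=\Gamma(\phi_s)$ and paired against $\rho_s$, the leading piece $\int T_{st}\Gamma(\phi_s)\,d\rho_s$ equals $\int\Gamma(\phi_s)\,d\rho_{s,t}$ by self-adjointness together with the duality $P_{st}(f\,m)=(T_{st}f)\,m$, which exactly cancels the $-\tfrac12\int\Gamma(\phi_s)\,d\rho_{s,t}$ term, so that what remains is precisely the $\k$-correction on the right-hand side of the claim. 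Integrating $\tfrac{d}{ds}A(s)$ over $s\in[0,1]$ then yields (i).

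The main obstacle is not the algebra but the rigorous justification of all these derivatives and integrations by parts: the chain rule for $s\mapsto\Ent(P_{st}\rho_s)$ with $s$-dependent input, the handling of the possibly singular $\k$ inside the Duhamel integral, the compatibility of the various notions of velocity field for $\rho_s$, and the identification of $\Gamma(T_{st}\phi_s)$ with $|DT_{st}\phi_s|_w^2$. This is precisely why the regular-curve hypothesis of \cite{AGS-BE}, Def.~4.10, the bounded support of $\phi$, and local $m$-integrability of $\k$ are imposed; with them the regularization schemes of \cite{AGS-BE}, Chapter~4, transfer essentially verbatim. For part (ii), the extension to $W_2$-geodesics with absolutely continuous marginals follows by approximation: such a geodesic is approximated by regular curves (e.g.\ by heat-smoothing its endpoints and reconnecting with the unique geodesic between them, the uniqueness being provided by \cite{RaSt}), the right-hand side is lower semicontinuous and the left-hand side continuous under this limit, and $\int_0^1|\dot\rho_s|^2\,ds=W_2^2(\rho_0,\rho_1)$ is the defining property of a $W_2$-geodesic.
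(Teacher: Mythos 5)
Your proposal follows essentially the same route as the paper: the paper likewise treats the lemma as a perturbation of \cite{AGS-BE}, Thm.\ 4.16, observing that the only step that changes is the replacement of the constant-curvature gradient estimate by (\ref{k-grad}), and then extracts the correction term via the identical Duhamel identity $T_{st}-T^{2\k}_{st}=\int_0^{st}T_r\bigl(2\k\cdot T^{2\k}_{st-r}\bigr)\,dr$ before passing to the limit in the AGS regularization parameter; part (ii) is likewise obtained by approximating the geodesic with regular curves and using lower semicontinuity of the entropy together with Fatou's lemma on the correction term. Apart from the acknowledged coefficient bookkeeping in your sketch of the action functional, this matches the paper's argument.
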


\begin{proof}
{\bf i)}
For $\k=0$, obviously the RHS vanishes and the estimate coincides with the result in \cite{AGS-BE}, Thm 4.16.
There is only one estimate which changes if one passes from $\k=0$ to $\k\not=0$: In the previous case, the gradient estimate allows to estimate
\begin{eqnarray*}
-\int_0^1\int_X  T_{st}\Gamma(\phi_s+t\,g^\epsilon_{s,t})\cdot f_s\,dm\,ds\end{eqnarray*}
in terms of
\begin{eqnarray}\label{old}
-\int_0^1\int_X  \Gamma(T_{st}(\phi_s+t\,g^\epsilon_{s,t}))\cdot f_s\,dm\,ds.\end{eqnarray}
(For the definition of $g^\epsilon_{s,t}$ we refer to \cite{AGS-BE}.) In the case $\k\not=0$, our new gradient estimate (\ref{k-grad}) will allow to estimate
\begin{eqnarray*}
-\int_0^1\int_X  T^{2\k}_{st}\Gamma(\phi_s+t\,g^\epsilon_{s,t})\cdot f_s\,dm\,ds\end{eqnarray*}
in terms of
the same expression (\ref{old}) as before.
Thus
\begin{eqnarray}
\int\phi_1\,d\rho_{1,t}-\int\phi_0\,d\rho_0&-&\int_0^1|\dot\rho_s|^2\,ds+2t(\Ent(\rho_{1,t})-\Ent(\rho_0))\nonumber \\
&\le&
-\int_0^1\int_X  T_{st}\Gamma(\phi_s+t\,g^\epsilon_{s,t})\cdot f_s\,dm\,ds\nonumber\\
&&+
\int_0^1\int_X  T^{2\k}_{st}\Gamma(\phi_s+t\,g^\epsilon_{s,t})\cdot f_s\,dm\,ds.
\label{rest}
\end{eqnarray}
Duhamel's principle states that
\begin{equation*}T_{st}-T^{2k}_{st}=\int_0^{st}T_r\big(2\k\cdot T_{st-r}^{2\k}\big)\,dr\end{equation*}
in the sense of operators.
Thus the RHS of (\ref{rest}) can be estimated by
\begin{eqnarray*}
-\int_0^1\int_0^{st}\int_X T_r\left(2\k\cdot T_{st-r}^{2\k}\Gamma(\phi_s+t\,g^\epsilon_{s,t})\right)\cdot f_s\,dm\,dr\,ds.
\end{eqnarray*}
Thanks to the uniform estimates and convergence properties of $g^\epsilon_{s,t}$ established in \cite{AGS-BE},  we obtain in the limit $\epsilon\to0$
\begin{eqnarray*}
\lefteqn{\int\phi_1\,d\rho_{1,t}-\int\phi_0\,d\rho_0-\int_0^1|\dot\rho_s|^2\,ds+2t(\Ent(\rho_{1,t})-\Ent(\rho_0))}\nonumber \\
&\le&
-\int_0^1\int_0^{st}\int_X T_r\left(2\k\cdot T_{st-r}^{2\k}\Gamma(\phi_s)\right)\cdot f_s\,dm\,dr\,ds
\end{eqnarray*}
which is the first claim.

\smallskip

{\bf ii)}
Given any  geodesic $(\rho_s)_{s\in[0,1]}$ with absolutely continuous measures $\rho_s=f_s\,m$, we approximate it by regular curves $(\rho_s^n)_{s\in[0,1]}$. The measures  $\rho_s^n$ will be absolutely continuous with densities  $f_s^n$. We may choose the approximation such that $f^n_s(x)\to f_s(x)$ for a.e. $(x,s)\in X\times[0,1]$ as $n\to\infty$.
Put $\rho_{1,t}^n=\mu_t^n=P_t\mu_0^n$ and let $\phi_s=Q_s\phi$ be the Hamilton-Jacobi flow induced by any Lipschitz function $\phi$ with bounded support. Part i) of the lemma then yields
\begin{eqnarray*}
\int\phi_1\,d\rho^n_{1,t}&-&\int\phi_0\,d\rho^n_0
-\int_0^1|\dot\rho_s^n|^2\,ds+2t\Big(\Ent(\rho_{1,t}^n)-\Ent(\rho_0^n)\Big)\nonumber\\
&\le&
-\int_0^1s\int_0^{t}\int_X T_{sr}\left(2\k\cdot T_{s(t-r)}^{2\k}\Gamma(\phi_s)\right)\cdot f_s^n\,dm\,dr\,ds.
\end{eqnarray*}
Of course, $\int\phi_1\,d\rho^n_{1,t}\to \int\phi_1\,d\rho_{1,t}$ and $\int\phi_0\,d\rho^n_0\to \int\phi_0\,d\rho_0$ as $n\to\infty$
as well as $\int_0^1|\dot\rho_s^n|^2\,ds\to W^2_2(\rho_0,\rho_1)$.
Lower semicontinuity of the entropy implies $\Ent(\rho_{1,t})\le\liminf_{n\to\infty} \Ent(\rho_{1,t}^n)$ and by construction of the regular curve we may achieve that $\Ent(\rho_0)\ge\limsup_{n\to\infty} \Ent(\rho_0^n)$.
Passing to the limit $n\to\infty$ the previous estimate thus yields
\begin{eqnarray}
\int\phi_1\,d\rho_{1,t}&-&\int\phi_0\,d\rho_0-W_2^2(\rho_0,\rho_{1})+2t\Big(\Ent(\rho_{1,t})-\Ent(\rho_0)\Big)\nonumber\\
&\le&
-\limsup_{n\to\infty}\int_0^1s\int_0^{t}\int_X T_{sr}\left(2\k\cdot T_{s(t-r)}^{2\k}\Gamma(\phi_s)\right)\cdot f_s^n\,dm\,dr\,ds\nonumber\\
&\le&
-\int_0^1s\int_0^{t}\int_X T_{sr}\left(2\k\cdot T_{s(t-r)}^{2\k}\Gamma(\phi_s)\right)\cdot f_s\,dm\,dr\,ds.\label{april}
\end{eqnarray}
where the last inequality follows by means of Fatou's lemma from lower boundedness of \\ $T_{sr}\left(2\k\cdot T_{s(t-r)}^{2\k}\Gamma(\phi_s)\right)$  and from the a.e. convergence of $f_s^n\to f_s$ (together with $\int f_s^n\,dm=\int f_s\, dm$).
\end{proof}

\begin{lemma}
Let $\rho_0$ and $\rho_1\in\Pr_2(X)\cap\Dom(\Ent)$ be given (without restriction, we may assume that both measures have bounded densities), let $(\rho_s)_{s\in[0,1]}$ be the unique geodesic connecting them and put
$\rho_{1,t}=P_t\rho_1$.

Fix  a Kantorovich potential $\phi_0$ for the optimal transport from $\rho_0$ to $\rho_{1}$ and for each $t>0$ let $\phi_0^t$ be a
Lipschitz function with bounded support such that $\int\phi_1^t\,d\rho_{1,t}-\int\phi_0^t\,d \rho_0\ge W^2_2(\rho_0,\rho_{1,t})-2t^2$
(i.e. $\phi_0^t$ is 'almost optimal' for the Kantorovich duality problem for $\rho_0$ and $\rho_{1,t}$) and
\[ \Gamma(\phi_s^t)(x)\to \Gamma(\phi_s)(x) \qquad  \mbox{in }L^1(X\times [0,1],dm\otimes ds)\]
as $t\to0$ where $\phi_s^t=Q_s\phi^t_0$ denotes the Hamilton-Jacobi flow induced by $\phi^t_0$. 
%(Actually, it suffices that these properties hold along a subsequence of any given null sequence $(t_l)_{l\in\N}$.)
Then
\begin{eqnarray*}
\lefteqn{\limsup_{t\to0}\frac1{2t}\Big[W_2^2(\rho_0,\rho_{1,t})-W_2^2(\rho_0,\rho_{1})\Big] } \\
&
\le&\Ent(\rho_{0})-\Ent(\rho_1)-\int_0^1s\int_X \Gamma(\phi_s)\cdot \k\cdot  f_s\,dm\,ds.
\end{eqnarray*}
\end{lemma}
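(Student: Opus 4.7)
The plan is to feed the test function $\phi^t$ (whose Hamilton--Jacobi evolution $\phi_s^t=Q_s\phi_0^t$ is Lipschitz with bounded support) into part (ii) of the preceding action estimate, applied to the $W_2$-geodesic $(\rho_s)_{s\in[0,1]}$. This trades the left-hand side of the action estimate for a quantity involving $W_2^2(\rho_0,\rho_{1,t})$ via the almost-Kantorovich bound. Dividing by $2t$ and letting $t\to0$ then produces the $\limsup$ on the LHS of the claim while the RHS of the action estimate collapses, by a Lebesgue-differentiation argument, to the target $\k$-weighted Dirichlet-type integral.

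More precisely, the first step is to apply part (ii) of the action lemma with $\phi=\phi_0^t$, obtaining
\begin{equation*}
\int\phi_1^t\,d\rho_{1,t}-\int\phi_0^t\,d\rho_0-W_2^2(\rho_0,\rho_1)+2t\bigl(\Ent(\rho_{1,t})-\Ent(\rho_0)\bigr)\le -\!\int_0^1\!s\!\int_0^t\!\int_X T_{sr}\!\left(2\k\,T_{s(t-r)}^{2\k}\Gamma(\phi_s^t)\right)f_s\,dm\,dr\,ds.
\end{equation*}
I then substitute the almost-optimality bound $\int\phi_1^t\,d\rho_{1,t}-\int\phi_0^t\,d\rho_0\ge W_2^2(\rho_0,\rho_{1,t})-2t^2$, divide through by $2t$, and rearrange to
\begin{equation*}
\frac{1}{2t}\bigl[W_2^2(\rho_0,\rho_{1,t})-W_2^2(\rho_0,\rho_1)\bigr]+\Ent(\rho_{1,t})-\Ent(\rho_0)\le t-\frac{1}{2t}\!\int_0^1\!s\!\int_0^t\!\int_X T_{sr}\!\left(2\k\,T_{s(t-r)}^{2\k}\Gamma(\phi_s^t)\right)f_s\,dm\,dr\,ds.
\end{equation*}
Taking $\limsup_{t\to0}$ on the LHS, the entropy term converges to $\Ent(\rho_1)-\Ent(\rho_0)$: indeed, $\rho_1$ has bounded density so $\Ent(\rho_1)<\infty$, the heat flow is $W_2$-continuous, lower semicontinuity gives $\liminf\Ent(\rho_{1,t})\ge\Ent(\rho_1)$, and monotonicity of $\Ent$ along $P_t$ (built into the $\CD(\KK,\infty)$ framework) gives the matching upper bound.

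The main work sits on the right-hand side, where I have to evaluate the limit of the $t$-average $\tfrac{1}{2t}\int_0^t(\cdot)\,dr$ of an expression that depends on $t$ both through the outer factor $T_{s(t-r)}^{2\k}$ and through the $t$-dependent potential $\phi_s^t$. The strategy is to use selfadjointness of $T_{sr}$ to push it onto $f_s$, writing the inner integral as $\int_X 2\k\,T_{s(t-r)}^{2\k}\Gamma(\phi_s^t)\cdot T_{sr}f_s\,dm$, and then to argue that, thanks to the strong $L^p$-continuity of $T_r$ and of the Schrödinger semigroup $T_r^{2\k}$ (the latter was established just before the action estimate via the Feynman--Kac bound $|M_r|\le e^{2|\KK|r}$), the integrand converges as $t\to0$ and $r/t\to\tau\in[0,1]$ to $2\k\,\Gamma(\phi_s)\cdot f_s$. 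Combined with the hypothesis $\Gamma(\phi_s^t)\to\Gamma(\phi_s)$ in $L^1(dm\otimes ds)$ and with the uniform domination $T_{s(t-r)}^{2\k}\Gamma(\phi_s^t)\le e^{2|\KK|t}\|\Gamma(\phi_s^t)\|_\infty$ (valid because the $\phi_0^t$ are Lipschitz with bounded support, hence $\Gamma(\phi_s^t)\in L^\infty$ uniformly along any controllable approximation), a Fatou/dominated-convergence argument and the elementary Lebesgue-differentiation identity $\tfrac{1}{t}\int_0^t h(r)\,dr\to h(0)$ then yield
\begin{equation*}
\limsup_{t\to0}\;-\frac{1}{2t}\!\int_0^1\!s\!\int_0^t\!\int_X T_{sr}\!\left(2\k\,T_{s(t-r)}^{2\k}\Gamma(\phi_s^t)\right)f_s\,dm\,dr\,ds\le -\int_0^1 s\int_X \k\,\Gamma(\phi_s)\,f_s\,dm\,ds.
\end{equation*}

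The hard part will be making the preceding passage to the limit rigorous, because $\k$ has no a priori upper bound and only $\k\ge\KK$ is available: one must split $\k=\k_+-\k_-$, apply Fatou to the $\k_+$ contribution and dominated convergence (using $-\k_-\ge\KK\wedge 0$ and the uniform $L^\infty$-bound on $\Gamma(\phi_s^t)$) to the $\k_-$ contribution, and verify that the regularity of $\phi_0^t$ supplied by the hypotheses is preserved under the Hamilton--Jacobi flow $Q_s$ with uniform-in-$t$ control, as in \cite{AGS-BE}. Once this is in place, rearranging the resulting inequality gives exactly the stated estimate.
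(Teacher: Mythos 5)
Your overall strategy is the same as the paper's: apply part (ii) of the action estimate to $\phi_0^t$, insert the almost-optimality bound, divide by $2t$, move the semigroups onto $f_s$ by selfadjointness, and identify the limit of the time-averaged Schr\"odinger-semigroup term paired with $\Gamma(\phi_s^t)$. The entropy term is handled correctly (in fact only lower semicontinuity of $\Ent$ is needed there, not monotonicity along $P_t$). The gap is in the final limit passage, in two places. First, you dominate the product by $e^{2|\KK|t}\|\Gamma(\phi_s^t)\|_\infty$, but no uniform-in-$t$ bound on the Lipschitz constants of the $\phi_0^t$ is among the hypotheses; the only assumption is $\Gamma(\phi_s^t)\to\Gamma(\phi_s)$ in $L^1(dm\otimes ds)$. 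The $L^\infty$ bound must be carried by the \emph{other} factor: the paper uses that $\CD(\KK,\infty)$ forces $f_s\le C_0$ uniformly, so that after truncating $\k$ the averaged term $\frac1t\int_0^t T^{2\k}_{s(t-r)}\big((\k\wedge C_1)\,T_{sr}f_s\big)\,dr$ is bounded by $(|\KK|\vee C_1)\,C_0\,e^{2|\KK|}$ and converges in $L^p$ and (along a subsequence) a.e.; a uniformly bounded, a.e.\ convergent factor times an $L^1$-convergent factor then converges in $L^1$. Your version has the roles of the two factors reversed.

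Second, the split $\k=\k_+-\k_-$ with Fatou applied to the $\k_+$ part does not close as stated. Fatou would require the pointwise bound $\liminf_{t\to0}\frac1t\int_0^t T^{2\k}_{s(t-r)}\big(2\k_+\,T_{sr}f_s\big)\,dr\ge 2\k_+ f_s$ a.e., but $\k_+$ is only locally $m$-integrable, so $\k_+\,T_{sr}f_s$ need not lie in any $L^p$ and the strong-continuity/Lebesgue-differentiation argument you invoke is not available for it; to recover the pointwise bound you would have to truncate $\k_+$ from above anyway. This is precisely the paper's device: replace $\k$ by $\k\wedge C_1$, which gives a valid lower bound for the right-hand side because the semigroups preserve positivity and $T_{sr}f_s\ge0$, $\Gamma(\phi_s^t)\ge0$; pass to the limit $t\to0$ for fixed $C_1$ to obtain the inequality with $\k\wedge C_1$; and only then let $C_1\to\infty$ by monotone convergence. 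With these two repairs your argument becomes the paper's proof.
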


\begin{proof}
 Due to the $\CD(\KK,\infty)$-condition we know that all the measures $\rho_s$ will be absolutely continuous with uniformly bounded densities, say $f_s\le C_0$.

Applying the estimate of the previous Lemma to the function $\phi^t_0$ (inducing the transport almost towards the measure $\rho_{1,t}$) and dividing by $-2t$ gives
\begin{eqnarray*}
-\frac1{2t}\Big[W_2^2(\rho_0,\rho_{1,t})&-&W_2^2(\rho_0,\rho_{1})\Big]+t-\Big(\Ent(\rho_{1,t})-\Ent(\rho_0)\Big)\nonumber\\
&\ge&
\frac1{2t}\int_0^1s\int_0^{t}\int_X \Gamma(\phi_s^t)\cdot T_{s(t-r)}^{\k}\left(2\k\cdot T_{sr} f_s\right)\,dm\,dr\,ds\\
&\ge&
\int_0^1s\int_X\Gamma(\phi_s^t)\cdot\Big[ \frac1t\int_0^{t}T_{s(t-r)}^{2\k}\left(\big(\k\wedge C_1\big)\cdot T_{sr} f_s\right)\,dr \Big] \,dm\,ds
\end{eqnarray*}
for any $C_1\ge0$.
Strong continuity of the semigroups $(T_r)_{r>0}$ and $(T^{2\k}_r)_{r>0}$ in $L^p$ implies that for any $p\in [1,\infty)$
\[ \frac1t\int_0^{t}T_{s(t-r)}^{2\k}\left(\big(\k\wedge C_1\big)\cdot T_{sr} f_s\right)
\to \big( \k\wedge C_1\big)\cdot f_s\qquad \mbox{in }L^p\]
as $t\to0$
and (after passing to a subsequence -- which we drop from the notation) also a.e. on $X\times[0,1]$. Moreover, note that
\[ \left|\frac1t\int_0^{t}T_{s(t-r)}^{2\k}\left(\big(\k\wedge C_1\big)\cdot T_{sr} f_s\right)\right|\le
\big(|K|\vee C_1\big)\cdot C_0\cdot e^{2|K|}\]
for a.e. $(x,s)$ and all $t\le1$.
Together with the $L^1$-convergence $\Gamma(\phi_s^t)\to\Gamma(\phi_s)$
this gives that
\[ \Gamma(\phi_s^t)\cdot \frac1t\int_0^{t}T_{s(t-r)}^{2\k}\left(\big(\k\wedge C_1\big)\cdot T_{sr} f_s\right)
\to  \Gamma(\phi_s)\cdot\big( \k\wedge C_1\big)\cdot f_s\]
in $L^1(X\times [0,],dm\otimes ds)$ as $t\to0$. Thus
\begin{eqnarray}
-\limsup_{t\to0}\frac1{2t}\Big[W_2^2(\rho_0,\rho_{1,t})&-&W_2^2(\rho_0,\rho_{1})\Big]-\Big(\Ent(\rho_{1})-\Ent(\rho_0)\Big)\nonumber\\
&\ge&
\int_0^1s\int_X \Gamma(\phi_s)\cdot \big(\k\wedge C_1\big)\cdot  f_s\,dm\,ds\label{may}
\end{eqnarray}
for all $C_1\ge0$. Monotone convergence allows to pass to the limit $C_1\to\infty$ in (\ref{may}). Thus
\begin{eqnarray*}
-\limsup_{t\to0}\frac1{2t}\Big[W_2^2(\rho_0,\rho_{1,t})&-&W_2^2(\rho_0,\rho_{1})\Big]-\Big(\Ent(\rho_{1})-\Ent(\rho_0)\Big)\nonumber\\
&\ge&
\int_0^1s\int_X \Gamma(\phi_s)\cdot \k\cdot  f_s\,dm\,ds
\end{eqnarray*}
which is the claim.
\end{proof}

\subsection{Local-to-Global and Stability}
Recall our assumption that $(X,d,m)$ is infinitesimally Hilbertian.
\begin{theorem}  If the  curvature-dimension condition $\CD(\k,\infty)$ holds locally then it also holds globally.
\end{theorem}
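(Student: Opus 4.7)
The plan is to route the argument through the Bochner--Bakry-\'Emery condition $\BE(\k,\infty)$. Indeed, the integrated inequality (\ref{bochner2}) is linear and monotone in the nonnegative test function $\phi$ and, after a suitable density argument, is equivalent to the pointwise differential inequality $\tfrac12\Delta\,\Gamma(u)-\Gamma(u,\Delta u)\ge\k\cdot\Gamma(u)$. As such, $\BE(\k,\infty)$ is inherently local: if it holds when tested by $\phi$ supported in each member of an open cover of $X$, it holds globally. The strategy is therefore (1) local $\CD(\k,\infty)\Rightarrow$ local $\BE(\k,\infty)$, (2) patch to global $\BE(\k,\infty)$, and (3) invoke global $\BE(\k,\infty)\Rightarrow\CD(\k,\infty)$.

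For step (1), fix $x_0\in X$ and a neighborhood $U$ on which $\CD(\k,\infty)$ is assumed to hold. Apply the two action-estimate lemmas of the preceding subsection to endpoints $\rho_0,\rho_1\in\Pr_2(X)\cap\Dom(\Ent)$ with bounded densities supported in $U$: the Wasserstein geodesic $(\rho_s)_{s\in[0,1]}$ joining them is then concentrated in a controlled enlargement of $U$, so that (\ref{april}) and the subsequent passage $\rho_1\to\rho_0$ deliver a differential $\EVI_\k$-type bound against any almost-optimal Kantorovich potential $\phi$ compactly supported in $U$. Feeding this into the AGS--EKS machinery that underlies the equivalences of Theorem~\ref{fund} yields the gradient estimate (\ref{k-grad}) for initial data localized in $U$, and hence (\ref{bochner2}) for every admissible $\phi$ with support in $U$.

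For step (2), cover $X$ by countably many such $U_i$, fix an arbitrary admissible $\phi\ge0$, and decompose $\phi=\sum_i\phi_i$ via a partition of unity subordinate to $\{U_i\}$ with each $\phi_i\in\Dom(\Delta)\cap L^\infty(X,m)$ and $\Delta\phi_i\in L^\infty(X,m)$. Applying the local Bochner inequality to each $\phi_i$ and summing produces (\ref{bochner2}) for $\phi$, i.e.\ global $\BE(\k,\infty)$. Finally, step (3) is precisely the implication that the two action-estimate lemmas combined with Theorem~\ref{cd-evi} are designed to give: global $\BE(\k,\infty)$ produces the $\k$-convexity (\ref{k-conv}) along the Wasserstein geodesic between arbitrary $\rho_0,\rho_1\in\Pr_2(X)\cap\Dom(\Ent)$, which is the global $\CD(\k,\infty)$.

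The main obstacle is step (1). The action estimate invokes the heat semigroup $P_{st}$, the Hamilton--Jacobi flow $Q_s\phi$, and a regular-curve approximation $(\rho_s^n)$, none of which respects the neighborhood $U$. To genuinely leverage only the \emph{local} hypothesis $\CD(\k,\infty)$ on $U$, one must verify that for small $t$ and for $\phi=\phi_0^t$ chosen with support in a small enlargement of $U$, the approximating densities $f_s^n$ and Hamilton--Jacobi iterates $\phi_s^t$ remain concentrated enough in $U$ that the resulting estimates depend only on the restriction of the data to $U$. A secondary technical point in step (2) is the existence, in the non-smooth setting, of a partition of unity whose pieces lie in $\Dom(\Delta)$ with bounded Laplacian; this is standard in the AGS-Calc framework but must be invoked explicitly.
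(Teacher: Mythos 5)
Your plan is not a proof: it routes the statement through an equivalence $\CD(\k,\infty)\Leftrightarrow\BE(\k,\infty)$ that the paper itself does not establish, and its crucial first step is left open. Concretely, step (1) --- ``local $\CD(\k,\infty)$ on $U$ implies $\BE(\k,\infty)$ tested against $\phi$ supported in $U$'' --- is exactly the hard point, and you acknowledge it without resolving it. The obstruction is not merely technical: the Bochner inequality (\ref{bochner2}) quantifies over \emph{all} $u\in\Dom(\Delta)$ on $X$, and the chain $\CD\Rightarrow\EVI\Rightarrow$ gradient estimate $\Rightarrow$ Bochner passes through the heat semigroup, the Hamilton--Jacobi semigroup $Q_s$ and regular-curve approximations, none of which can be confined to $U$; the local hypothesis only controls transports between measures supported in a single $\overline U_i$. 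Step (3) is also not available as a black box: the paper's subsection ``From Bakry-Emery to Curvature-Dimension'' supplies two lemmas described as ``a key ingredient'' for $\BE(\k,\infty)\Rightarrow\CD(\k,\infty)$, but the implication is never completed as a theorem, and the converse $\CD(\k,\infty)\Rightarrow\BE(\k,\infty)$ for variable $\k$ is not proved either. So both ends of your pipeline rest on unproven implications, and the middle (the partition-of-unity patching, which is the only part that genuinely works thanks to linearity of (\ref{bochner2}) in $\phi$) cannot carry the argument alone.

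The paper's proof is entirely different and much more direct: it stays on the Lagrangian side. Using the Rajala--Sturm result that optimal transports do not charge branching geodesics, one upgrades the integrated $\k$-convexity (\ref{k-conv}) to a \emph{pathwise} inequality along $\Theta$-a.e.\ geodesic $\gamma$,
\begin{equation*}
\log\rho_t(\gamma_t)\le(1-t)\log\rho_0(\gamma_0)+t\log\rho_1(\gamma_1)-\int_0^1 g(s,t)\,\k(\gamma_s)\,|\dot\gamma|^2\,ds,
\end{equation*}
which is the integrated form of the one-dimensional differential inequality $\frac{\partial^2}{\partial t^2}\log\rho_t(\gamma_t)\ge\k(\gamma_t)|\dot\gamma|^2$ weakly in $t$. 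A weak differential inequality on an interval localizes trivially in time, so the local hypothesis can be propagated along each geodesic by subdividing it into short pieces lying in single charts $U_i$, and the global statement follows by the gluing argument of \cite{Stu1}. If you want a workable strategy, this pathwise localization --- not the Eulerian $\BE$ detour --- is the mechanism that makes local-to-global go through.
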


Here we say that the  $\CD(\k,\infty)$-condition holds locally on $X$ if $X$ can be covered by open sets $U_i$  such that for each pair of measures
$\mu_0,\mu_1$
supported in $\overline U_i$ a connecting geodesic $(\mu_t)_{t\in[0,1]}$ exists (which might leave the set $\overline U_i$)
with property (\ref{k-conv}).

\begin{proof}
According to \cite{RaSt} we know that an optimal transport never charges branching geodesics.
This allows to pass to a ``pathwise version'' of (\ref{k-conv}), cf. \cite{Stu1} (``localization in space''): for $\Theta$-a.e. $\gamma\in\Gamma(X)$ and every $t\in[0,1]$
\begin{equation}\label{k-conv-path}
\log\rho_t(\gamma_t)\le(1-t)\,\log\rho_0(\gamma_0)+t\,\log\rho_1(\gamma_1)-\int_0^1 g(s,t)\cdot \k(\gamma_s)\cdot |\dot\gamma|^2\,ds.
\end{equation}
This in turn is nothing but an integrated version of the fact that
$t\mapsto \rho_t(\gamma_t)$ is upper semicontinuous on $[0,1]$, continuous on $(0,1)$ and satisfies the differential inequality $\frac{\partial ^2}{\partial t^2}\log\rho_t(\gamma_t)\ge k(\gamma_t) |\dot\gamma|^2$ weakly
on $(0,1)$. This obviously allows for a ``localization in time''.
The proof thus follows by a careful adaption of the arguments in \cite{Stu1}.
\end{proof}

As a consequence of the previous local-to-global result we obtain that the curvature-dimension condition $\CD(\k,\infty)$ is stable under tensorization.

\begin{corollary}
Assume that mms $(X_i,d_i,m_i)$ for $i=1,\ldots,n$ are given, each of  which satisfies a curvature-dimension condition $\CD(\k_i,\infty)$ for some lower semicontinuous, lower bounded function $\k_i:X_i\to\R$.
Then the product space
\begin{equation*}\big(X,d,m\big)=\Big( X_1\times\ldots\times X_n, \sqrt{d_1^2+\ldots+d_n^2}, m_1\otimes\ldots m_n\Big)\end{equation*}
satisfies the curvature-dimension condition $\CD(\k,\infty)$ for the function
\begin{equation*}\k(x_1,\ldots,x_n):=\min\{0,\k_1(x_1),\ldots,\k_n(x_n)\}.\end{equation*}
\end{corollary}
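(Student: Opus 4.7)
The strategy is to invoke the preceding local-to-global theorem, so it suffices to verify $\CD(\k,\infty)$ locally on the product $X$. Fix a basepoint $x^0=(x_1^0,\ldots,x_n^0)\in X$ and $\epsilon>0$. Using lower semicontinuity of each $\k_i$, choose open neighborhoods $U_i\ni x_i^0$ on which $\k_i\ge c_i:=\k_i(x_i^0)-\epsilon$, and set $U=\prod_i U_i$. Recall that constant-speed geodesics in the product are exactly products of constant-speed geodesics in the factors, with $|\dot\gamma|^2=\sum_i|\dot\gamma_i|^2$. The proof rests on the pathwise reformulation of $\CD(\k,\infty)$ that was the key to the local-to-global theorem: for a $\Theta$-a.e.\ geodesic $\gamma$, the density $\rho_t$ of $\mu_t=(e_t)_*\Theta$ satisfies the weak differential inequality $\partial_t^2\log\rho_t(\gamma_t)\ge\k(\gamma_t)\,|\dot\gamma|^2$.

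Consider first the special case of product measures $\mu_j=\bigotimes_i\mu_{i,j}$ ($j=0,1$) supported in $\overline U$. Then the $W_2$-optimal plan factors as $\Theta=\bigotimes_i\Theta_i$, the density factors as $\rho_t(\gamma_t)=\prod_i\rho_{i,t}(\gamma_i(t))$, and hence $\log\rho_t(\gamma_t)=\sum_i\log\rho_{i,t}(\gamma_i(t))$. Summing the pathwise $\CD(\k_i,\infty)$-inequalities on each factor gives
\begin{equation*}
\partial_t^2\log\rho_t(\gamma_t)\;\ge\;\sum_i\k_i(\gamma_i(t))\,|\dot\gamma_i|^2\;\ge\;\k(\gamma_t)\,|\dot\gamma|^2,
\end{equation*}
where the last step uses the elementary inequality $\sum_i a_i v_i\ge\min\{0,a_1,\ldots,a_n\}\sum_i v_i$ valid for any $v_i\ge0$; taking the minimum with $0$ is what makes the bound robust to the arbitrary signs the $a_i$ may have. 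Integration recovers the pathwise $\CD(\k,\infty)$-inequality for product measures supported in $\overline U$.

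The main obstacle is extending this to arbitrary, non-product measures $\mu_0,\mu_1$ supported in $\overline U$. To handle this, I would combine the classical tensorization result for \emph{constant} curvature bounds on non-branching spaces -- which, applied to the local constants $c_i$, yields the constant-bound condition $\CD(K_\epsilon,\infty)$ locally on $\overline U$ with $K_\epsilon:=\min\{0,c_1,\ldots,c_n\}$ for arbitrary pairs of measures -- with the pathwise localization strategy from the local-to-global theorem. Since the pathwise $\CD$-inequality is equivalent to a pointwise-in-$t$ differential inequality for $\log\rho_t(\gamma_t)$, it can be verified at each $t_0$ in an arbitrarily small spatial neighborhood of $\gamma(t_0)$; by shrinking $U$ around $\gamma(t_0)$ and letting $\epsilon\to0$, lower semicontinuity gives $K_\epsilon\to\k(\gamma(t_0))$, upgrading the constant-bound differential inequality to the variable one at each time. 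Invoking the local-to-global theorem one last time then promotes the resulting local $\CD(\k,\infty)$-condition to its global version on $X$.
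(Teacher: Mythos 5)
Your skeleton --- reduce to a local statement via the preceding local-to-global theorem and feed in the classical tensorization of \emph{constant} curvature bounds on small neighborhoods --- is the same as the paper's. The gap lies in the bridge from the constant bound back to the variable one. You choose $c_i=\k_i(x_i^0)-\epsilon$ using only the one-sided estimate $\k_i\ge c_i$ near $x_i^0$ that lower semicontinuity provides, and then claim the constant-bound differential inequality can be ``upgraded'' to the variable one by shrinking the neighborhood around $\gamma(t_0)$ and letting $\epsilon\to0$. That upgrading step is the entire difficulty of the corollary, and it is asserted rather than proved: to localize the pathwise inequality at a single time $t_0$ you need (a) the restriction/non-branching machinery to condition $\Theta$ on geodesics passing near a given point, (b) neighborhoods large enough to contain the relevant geodesic segments --- the paper is careful to choose $V_i$ containing the convex hull of $\overline U_i$, whereas your $U_i$ need not have this property, so geodesics between measures supported in $\overline U$ may exit the region where $\k_i\ge c_i$ --- and (c) a Lebesgue-differentiation argument to convert ``$\partial_t^2\log\rho_t(\gamma_t)\ge K_{\epsilon,t_0}\,|\dot\gamma|^2$ weakly on a shrinking interval around each $t_0$'' into the distributional inequality $\partial_t^2\log\rho_t(\gamma_t)\ge\k(\gamma_t)\,|\dot\gamma|^2$; the weak second-derivative bound is a statement about a measure, so a pointwise limit at each single $t_0$ does not immediately yield it. None of this is carried out. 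Your product-measure special case is correct but is a detour that does not feed into the general argument.

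The paper sidesteps all of this with a two-step reduction that is missing from your proposal. First it treats \emph{continuous} $\k_i$, for which one has the two-sided bound $\k_i(y)\ge\KK_i\ge\k_i(z)-\epsilon$ on a suitable neighborhood; classical tensorization then gives $\CD(\KK,\infty)$ locally with $\KK=\min\{0,\KK_1,\ldots,\KK_n\}\ge\k-\epsilon$, hence $\CD(\k-\epsilon,\infty)$ locally, hence globally by local-to-global, and finally $\epsilon\to0$ by monotone convergence. The general lower semicontinuous case is then handled by writing each $\k_i$ as a monotone increasing limit of continuous functions $\k_i^l\nearrow\k_i$ and passing to the limit in $l$, again by monotone convergence. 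If you wish to keep your direct route you must supply (a)--(c) above; otherwise the continuous-approximation reduction is the cleaner fix and is precisely the idea your argument lacks.
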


\begin{proof}
Let us first prove the claim in the particular case where all the functions $\k_i$ are continuous.
According to the previous theorem, it suffices to prove that the  $\CD(\k,\infty)$-condition holds locally on $X$.
Given $x\in X$ and $\epsilon>0$ there exists open neighborhoods $V_i\supset U_i$ of $x_i$ and constants $\KK_i$ such that
$V_i$ contains the convex hull of $\overline U_i$ (i.e.
geodesics with endpoints in $\overline U_i$ do not leave $V_i$) and such that
\begin{equation*}\k_i(y)\ge \KK_i\ge \k_i(z)-\epsilon\end{equation*}
for $y,z\in V_i$. Put
$\KK:=\min\{0,\KK_1,\ldots,\KK_n\}$. Then for optimal transports in $X_i$ with marginals $\mu_0^i,\mu_1^i$ supported in $\overline U_i$ the `classical' $\CD(\KK_i,\infty)$-condition (with constant curvature bound $\KK_i$) applies. Due to the tensorization property of the latter \cite{Stu1},
the $\CD(\KK,\infty)$-condition (with constant curvature bound $\KK$ defined as above) applies to optimal transports in $X$ with marginals $\mu_0,\mu_1$ supported in $\overline U=\overline U_1\times\ldots\times \overline U_n$.
Thus the $\CD(\k-\epsilon,\infty)$-condition holds locally on $X$.
According to the previous local-to-global theorem this implies that the  $\CD(\k-\epsilon,\infty)$-condition holds globally on $X$.
Passing to the limit $\epsilon\to0$ yields (via monotone convergence)  the  $\CD(\k,\infty)$-condition globally on $X$.
This proves the claim in the particular case of continuous $\k_i$.

Now let us treat the case of general $\k_i$. For each $i$ there exists an monotone sequence of continuous functions $\k_i^l:X_i\to\R$ such that
\begin{equation*}\k_i^l(x)\nearrow \k_i(x)\qquad(\forall x\in X_i)\end{equation*}
as $l\to\infty$.
Applying the previous result to the functions $(\k_i^l)_{i=1\ldots,n}$ for fixed $l$ yields that $X$ satisfies
the $\CD(\k^l,\infty)$-condition  globally on $X$ with $\k^l(x_1,\ldots,x_n):=\min\{0,\k^l_1(x_1),\ldots,\k^l_n(x_n)\}$.
In the limit $l\to\infty$, the $\CD(\k,\infty)$-condition follows by monotone convergence.
\end{proof}

\begin{remark} Obviously, also the $\BE(\k,\infty)$-condition is stable under tensorization.
\end{remark}

Our next goal is to analyze how the curvature-dimension condition $\CD(\k,\infty)$ behaves  under change of measure.

%\begin{definition}
Given  functions $V$ and $\lambda$ on $X$, we say that $V$ is strongly $\lambda$-convex if for every geodesic $\gamma\in\Gamma(X)$ and every $t\in[0,1]$
\begin{equation}\label{V-conv}
V(\gamma_t)\le(1-t)\, V(\gamma_0)+t\, V(\gamma_1)-\int_0^1 g(s,t)\cdot \lambda(\gamma_s)\cdot |\dot\gamma|^2\,ds.
\end{equation}
%\end{definition}

\begin{proposition}
If $(X,d,m)$ satisfies the curvature-dimension condition $\CD(\k,\infty)$ and if $V:X\to\R$ is strongly $\lambda$-convex
then
$(X,d,e^{-V}\,m)$ satisfies the curvature-dimension condition $\CD(\k+\lambda,\infty)$.
\end{proposition}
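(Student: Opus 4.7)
The plan is to reduce everything to a pathwise addition of two convexity estimates along the same representing measure $\Theta$ on geodesics.

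First, I would record the change-of-reference-measure formula for the entropy. Writing $m'=e^{-V}m$ and $\mu=\rho'\,m'=\rho\,m$, a direct computation gives
\begin{equation*}
\Ent_{m'}(\mu)=\int \rho'\log\rho'\,dm'=\int \rho\log\rho\,dm+\int V\,d\mu=\Ent_m(\mu)+\int V\,d\mu,
\end{equation*}
valid whenever each term makes sense (which will follow from the integrability of $V$ against the endpoint measures, guaranteed by the quadratic bound coming from strong $\lambda$-convexity and finiteness of the second moments). I would also remark that the assumption (\ref{gaussian}) is inherited by $m'$ because $V$ grows at most quadratically at infinity.

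Next, fix $\mu_0,\mu_1\in\Pr_2(X)\cap\Dom(\Ent_{m'})$; these are automatically in $\Dom(\Ent_m)$ by the above identity. Apply the assumed $\CD(\k,\infty)$ condition for $(X,d,m)$ to produce a geodesic $(\mu_t)_{t\in[0,1]}$ together with a representing measure $\Theta\in\Pr(\Gamma(X))$ such that $\mu_t=(e_t)_*\Theta$ and
\begin{equation*}
\Ent_m(\mu_t)\le(1-t)\Ent_m(\mu_0)+t\Ent_m(\mu_1)-\int_0^1\!\!\int_{\Gamma(X)} g(s,t)\,\k(\gamma_s)\,|\dot\gamma|^2\,d\Theta(\gamma)\,ds.
\end{equation*}
At the same time, apply the strong $\lambda$-convexity of $V$ pointwise to $\Theta$-a.e.\ geodesic $\gamma$ and integrate against $\Theta$; using $\mu_t=(e_t)_*\Theta$ this yields
\begin{equation*}
\int V\,d\mu_t\le(1-t)\int V\,d\mu_0+t\int V\,d\mu_1-\int_0^1\!\!\int_{\Gamma(X)} g(s,t)\,\lambda(\gamma_s)\,|\dot\gamma|^2\,d\Theta(\gamma)\,ds.
\end{equation*}

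Adding the two displayed inequalities and using the entropy-transformation formula gives
\begin{equation*}
\Ent_{m'}(\mu_t)\le(1-t)\Ent_{m'}(\mu_0)+t\Ent_{m'}(\mu_1)-\int_0^1\!\!\int_{\Gamma(X)} g(s,t)\,(\k+\lambda)(\gamma_s)\,|\dot\gamma|^2\,d\Theta(\gamma)\,ds,
\end{equation*}
which is precisely $\CD(\k+\lambda,\infty)$ for $(X,d,m')$, with the \emph{same} geodesic $(\mu_t)$ and the \emph{same} representing measure $\Theta$. The main technical point to be careful about is the integrability of $V$ along the transport: one needs $V\in L^1(\mu_0)\cap L^1(\mu_1)$ in order for Fubini (i.e.\ integrating the pathwise inequality against $\Theta$) to be applied legitimately, and one must verify that $\mu_0,\mu_1\in\Dom(\Ent_{m'})$ already forces this. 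This follows because strong $\lambda$-convexity implies a quadratic lower bound $V(x)\ge -C(1+d(x_0,x)^2)$, so $\int V^-\,d\mu_i<\infty$ by the finite second moment, while finiteness of $\Ent_{m'}(\mu_i)$ together with $\Ent_{m'}=\Ent_m+\int V\,d\mu$ forces $\int V^+\,d\mu_i<\infty$. With this bookkeeping the proof is a direct superposition of the two convexity statements.
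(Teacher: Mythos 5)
Your proof is correct and follows exactly the paper's own argument: use the identity $\Ent_{m'}(\mu)=\Ent_m(\mu)+\int V\,d\mu$, integrate the strong $\lambda$-convexity inequality for $V$ against the representing measure $\Theta$ coming from the $\CD(\k,\infty)$-condition, and add the two inequalities. The additional bookkeeping you supply on the integrability of $V$ along the transport is a welcome refinement that the paper leaves implicit, but it does not change the route.
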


\begin{proof}
Recall that the relative entropy $\Ent'$ w.r.t. $m'=e^{-V}m$ is given by
\begin{equation*}\Ent'(\mu)=\Ent(\mu)+\int V\,d\mu.\end{equation*}
Integrating (\ref{V-conv}) w.r.t. the optimal path measure $\Theta$ from (\ref{k-conv})
leads to
\begin{equation}
\int V\,\mu_t\le(1-t)\, \int V\,d\mu_0+t\, \int V\,d\mu_1-\int_0^1\int_{\Gamma(X)} g(s,t)\cdot \lambda(\gamma_s)\cdot |\dot\gamma|^2\,d\Theta(\gamma)\,ds.
\end{equation}
Adding this to (\ref{k-conv}) yields the claim.
\end{proof}

Finally, we want to study whether the $\CD(\k,\infty)$-condition is stable under convergence. The precise formulation of this question already  requires some care: the curvature bounds  for the approximating spaces and for the limit space will be functions defined on different spaces.
To avoid additional complications,
we will restrict the discussion in the sequel to \emph{normalized} mms, i.e. mms $(X,d,m)$ with $m(X)=1$.

Recall that a sequence of normalized mms $(X_n,d_n,m_n)$, $n\in\N$, converges to a mms $(X,d,m)$ in $L^2$-transportation distance $\DD$
%(sometimes also called Sturm-Gromov-Hausdorff convergence)
if and only if there exists a metric space $(X^*,d^*)$ and isometric embeddings
$\iota_n:\, X_n\hookrightarrow X^*$,   $\iota:\, X\hookrightarrow X^*$ such that the push forward measures converge w.r.t. $L^2$-Wasserstein distance $W^*_2$ on $(X^*,d^*)$:
\begin{equation*}W_2^*\Big((\iota_n)_*m_n, \iota_*m\Big)\to0.\end{equation*}
\begin{definition}
We say that a function $\k:X\to\R$ is \emph{asymptotically dominated} by a sequence of functions $\k_n:X_n\to\R$ if for each $\epsilon>0$ there exists $n'\in\N$, isometric embeddings $(\iota_n)_{n\ge n'},\iota$ into a common metric space $(X^*,d^*)$ as above and a lower semicontinuous function $\k^*:\, X^*\to\R$ such that \, $\forall n\ge n'$
\begin{equation}
\k\le \k^*\circ\iota \quad\mbox{on }X,\qquad \k^*\circ\iota_n\le \k_n+\epsilon \quad\mbox{on }X_n .
\end{equation}
\end{definition}

\begin{theorem}
For each $n\in\N$, let  $(X_n,d_n,m_n)$ be a normalized mms which satisfies the curvature-dimension condition $\CD(\k_n,\infty)$
for some function $\k_n:X_n\to\R$. Assume that for $n\to\infty$ the sequence of spaces $(X_n,d_n,m_n)$ converges in $\DD$-distance to some normalized
mms $(X,d,m)$ and assume that
 the function $\k:X\to\R$ is asymptotically dominated by the sequence of functions $\k_n:X_n\to\R$.
 Then $(X,d,m)$ satisfies the curvature-dimension condition $\CD(\k,\infty)$.
\end{theorem}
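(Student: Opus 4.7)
The plan is to apply $\CD(\k_n,\infty)$ on each $X_n$, push the resulting optimal path measures into the common ambient space $X^*$ furnished by asymptotic domination, extract a subsequential weak limit, and pass to the limit in the defining inequality. Fix $\mu_0,\mu_1\in\Pr_2(X)\cap\Dom(\Ent)$, which without loss of generality may be taken with bounded densities. For each $\epsilon>0$ let $(X^*,d^*),\iota_n,\iota,\k^*$ be the data of the asymptotic-domination definition; by enlarging $X^*$ if necessary, arrange that the same embeddings $\iota_n,\iota$ also realize the $\DD$-convergence, so $W_2^*((\iota_n)_*m_n,\iota_*m)\to 0$. Using this Wasserstein convergence, a standard block-averaging construction along optimal couplings between $(\iota_n)_*m_n$ and $\iota_*m$ yields approximating marginals $\mu_i^n\in\Pr_2(X_n)\cap\Dom(\Ent)$ with $(\iota_n)_*\mu_i^n\to \iota_*\mu_i$ in $W_2^*$ and $\limsup_n \Ent(\mu_i^n)\le \Ent(\mu_i)$ for $i=0,1$.

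Applying $\CD(\k_n,\infty)$ produces a geodesic $(\mu_t^n)_{t\in[0,1]}$ in $\Pr_2(X_n)$ and $\Theta^n\in\Pr(\Gamma(X_n))$ satisfying (\ref{k-conv}) with $\k_n$ in place of $\k$. Set $\Theta^{n,*}:=(\iota_n)_*\Theta^n$, a probability measure on $C([0,1];X^*)$ concentrated on constant-speed geodesics of $X^*$ (since each $\iota_n$ is an isometric embedding, lengths and the geodesic character are preserved). Uniform second-moment bounds on the endpoint marginals together with the fact that a geodesic in $X^*$ is determined by its endpoints give tightness of $(\Theta^{n,*})$; extract a subsequential weak limit $\Theta^*$. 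The uniform bound $\KK\le\k_n$ gives a $\CD(\KK,\infty)$-bound on the displacement convexity of $\Ent$ along $\mu_t^n$, hence uniform-in-$n$ entropy bounds on $(e_t)_*\Theta^{n,*}$; lower semicontinuity of the entropy forces each $(e_t)_*\Theta^*$ to be absolutely continuous with respect to $\iota_*m$, hence concentrated in $\iota(X)$. The measure $\Theta^*$ therefore descends to $\Theta\in\Pr(\Gamma(X))$, and $\mu_t:=(e_t)_*\Theta$ is a geodesic in $\Pr_2(X)$ joining $\mu_0,\mu_1$.

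It remains to pass to the limit in (\ref{k-conv}). Lower semicontinuity of $\Ent$ together with the recovery-sequence bound $\limsup_n\Ent(\mu_i^n)\le\Ent(\mu_i)$ handle the entropy terms. For the curvature integral, the inequality $\k^*\circ\iota_n\le\k_n+\epsilon$ allows replacement of $\k_n$ by $\k^*\circ\iota_n-\epsilon$ in (\ref{k-conv}) on $X_n$; after rewriting the whole estimate on $X^*$ via $\iota_n$, the standard fact that for any bounded-below lower semicontinuous $h$ the functional
\[ \Theta\mapsto \int_0^1\int g(s,t)\, h(\gamma_s)\,|\dot\gamma|^2\,d\Theta(\gamma)\,ds \]
is lower semicontinuous under weak convergence of probability measures on $C([0,1];X^*)$, applied with $h=\k^*$, yields the limit. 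Finally $\k\le\k^*\circ\iota$ on $X$ translates the result back, producing (\ref{k-conv}) up to an additive error of order $\epsilon\cdot W_2^2(\mu_0,\mu_1)$. Sending $\epsilon\downarrow 0$ and extracting a further subsequential limit of the corresponding path measures $\Theta_\epsilon$ by tightness yields $\CD(\k,\infty)$.

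The principal obstacle is the limit of the curvature integral: the functions $\k_n$ live on a priori disjoint spaces $X_n$ and have no pointwise comparison with $\k$, so neither direct Fatou nor uniform convergence is available. The asymptotic-domination hypothesis is designed precisely to neutralize this by producing a single lower semicontinuous $\k^*$ on $X^*$ that sandwiches (up to $\epsilon$) the relevant functions, so that weak lower semicontinuity of integrals against path measures on $X^*$ can be applied. A secondary technical difficulty is the joint construction of approximating marginals with controlled entropy and the identification of the support of $\Theta^*$ inside $\iota(X)$; both steps are by now routine adaptations of the stability theory developed for constant curvature bounds in \cite{Stu1,AGS-BE}.
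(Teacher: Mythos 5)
Your proof follows essentially the same route as the paper's: embed everything into the common space $X^*$ furnished by the asymptotic-domination hypothesis, transfer the marginals to $X_n$ with entropy control via an optimal coupling of $m_n$ and $m$, extract a weak subsequential limit of the optimal geodesic plans $\Theta^n$, and pass to the limit in (\ref{k-conv}) using lower semicontinuity of the entropy and of the curvature functional built from the sandwiching function $\k^*$, finally sending $\epsilon\to0$. The one slip is your parenthetical justification that tightness of $(\Theta^{n,*})$ holds because ``a geodesic in $X^*$ is determined by its endpoints'' --- geodesics in a metric space need not be unique --- but the tightness itself is the same standard fact the paper invokes without further proof, so this does not affect the argument.
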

In brief words: The curvature-dimension condition $\CD(\k,\infty)$ is stable under convergence.

\begin{proof}
Our proof follows the argumentation in \cite{Stu1}.
Given the mms $(X_n,d_n,m_n)$, $n\in\N$, and $(X,d,m)$ as above we may assume without restriction that they are already isometrically embedded as subsets into some space $(X^*,d^*)$. Let $q_n$ be an $W_2^*$-optimal coupling  of $m_n$ and $m$.
Its disintegration kernel allows to map each probability measure $\mu$ which is absolutely continuous w.r.t. $m$ onto a probability measure $\mu^{n}$ which is absolutely continuous w.r.t. $m_n$ in such a way that $\Ent_n(\mu^n)\le \Ent(\mu)$.

Given two probability measures $\mu_0,\mu_1$ (supported on $X$) with finite entropy w.r.t. $m$ we thus obtain in a canonical way corresponding
probability measures $\mu_0^n,\mu_1^n$ (supported on $X_n$) with finite entropy w.r.t. $m_n$.
The curvature-dimension condition $\CD(\k_n,\infty)$ for the space  $(X_n,d_n,m_n)$
yields the existence of a probability measure $\Theta^n$ on $\Gamma(X_n)\subset\Gamma(X^*)$ which induces  a geodesic $\mu_t^n=(e_t)_*\Theta^n$ connecting $\mu_0^n,\mu_1^n$ and which satisfies
 \begin{eqnarray*}
\Ent_n(\mu_t^n)&\le&(1-t)\,\Ent_n(\mu_0^n)+t\,\Ent_n(\mu_1^n)\\
&&-\int_0^1\int_{\Gamma(X_n)} g(s,t)\cdot \k_n(\gamma_s)\cdot |\dot\gamma|^2\,d\Theta^n(\gamma)\,ds
\end{eqnarray*}
for each $t$.
Tightness implies
the existence of a converging subsequence -- again denoted by $(\Theta^n)_{n\ge n'}$ -- and a limit measure $\Theta\in\Pr(\Gamma(X^*))$.
Lower semicontinuity of $n\mapsto \Ent_n(\mu_t^n)$ (w.r.t. both measures involved) then provides the estimate
\begin{equation*}\Ent(\mu_t)\le\liminf_{n\to\infty} \Ent_n(\mu_t^n).\end{equation*}
Moreover, by construction we have $\Ent(\mu_t)=\lim_{n\to\infty} \Ent_n(\mu_t^n)$ for $t=0$ and $t=1$.
It remains to prove
 \begin{eqnarray*}
\lefteqn{\liminf_{n\to\infty}\int_0^1\int_{\Gamma(X_n)} g(s,t)\cdot \k_n(\gamma_s)\cdot |\dot\gamma|^2\,d\Theta^n(\gamma)\,ds}\\
&\ge&
\int_0^1\int_{\Gamma(X)} g(s,t)\cdot \k(\gamma_s)\cdot |\dot\gamma|^2\,d\Theta(\gamma)\,ds.
\end{eqnarray*}
Using the embedding into $X^*$ and the estimates between $\k_n, \k^*$ and $\k$ it suffices to prove
 \begin{eqnarray*}
\lefteqn{\liminf_{n\to\infty}\int_0^1\int_{\Gamma(X^*)} g(s,t)\cdot \k^*(\gamma_s)\cdot |\dot\gamma|^2\,d\Theta^n(\gamma)\,ds}\\
&\ge&
\int_0^1\int_{\Gamma(X^*)} g(s,t)\cdot \k^*(\gamma_s)\cdot |\dot\gamma|^2\,d\Theta(\gamma)\,ds.
\end{eqnarray*}
This finally follows from the weak convergence $\Theta^n\to\Theta$ and from the fact that
 \begin{equation*}
\Theta\mapsto
\int_0^1\int_{\Gamma(X^*)} g(s,t)\cdot \k^*(\gamma_s)\cdot |\dot\gamma|^2\,d\Theta(\gamma)\,ds
\end{equation*}
is a lower semicontinuous function on $\Pr(\Gamma(X^*))$ which in turn follows from the lower semicontinuity of $\k^*$.
\end{proof}


\begin{thebibliography}{999}


\bibitem{AGMR}
\textsc{Ambrosio, L., Gigli, N., Mondino, A.  and Rajala, T.},
{R}iemannian {R}icci curvature lower bounds in metric measure spaces with $\sigma$-finite measures,
Preprint  arXiv:1207.4924, 2012.

\bibitem{AGS-BE}
\textsc{Ambrosio, L., Gigli, N. and Savar\'{e}, G.},
Bakry-\'{E}mery curvature-dimension condition and {R}iemannian {R}icci curvature bounds,
Preprint  arXiv:1209.5786, 2012.

\bibitem{AGS-Calc}
\textsc{Ambrosio, L., Gigli, N. and Savar\'{e}, G.},
Calculus and heat flow in metric measure spaces and applications to spaces with {R}icci bounds from below,
 Invent. Math. 2013.

\bibitem{AGS-Mms}
\textsc{Ambrosio, L., Gigli, N. and Savar\'{e}, G.},
Metric measure spaces with {R}iemannian {R}icci curvature bounded from below,
Preprint  arXiv:1109.0222, 2011.


\bibitem{BaSt}
\textsc{Bacher, K. and Sturm, K.T.},
Localization and tensorization properties of the
              curvature-dimension condition for metric measure spaces,
J. Funct. Anal. {\bf 259} (2010),
  28--56.
		

\bibitem{Bak}
\textsc{Bakry, D.},
L'hypercontractivit� et son utilisation en th�orie des semigroupes, pp. 1-114 in
\textit{Lectures on probability theory, Saint-Flour 1992},
Lecture Notes in Math., 1581, Springer, 1994.



\bibitem{Bak-Eme}
\textsc{Bakry, D. and {\'E}mery, M.},
 {Diffusions hypercontractives},
 {S\'eminaire de probabilit\'es, {XIX}, 1983/84},
  {Lecture Notes in Math.} {\bf 1123},     {177--206},
{Springer},  {Berlin}     {1985}.

\bibitem{Bauer}
\textsc{Bauer, H.}, \textit{Probability theory and elements of measure theory}, 2nd ed. Academic Press 1982.


\bibitem{Boga}
\textsc{Bogachev, V.I.},
\textit{Measure theory. Vol. I, II}. Springer 2007.

\bibitem{CaSt}
\textsc{Cavalletti, F. and Sturm, K.T.},
Local curvature-dimension condition implies
              measure-contraction property,
J. Funct. Anal. {\bf 262} (2012),
5110--5127.


\bibitem{Erbart}
\textsc{Erbar, M.},
The heat equation on manifolds as a gradient flow in the
              {W}asserstein space,
Ann. Inst. Henri Poincar\'e Probab. Stat.,
{\bf 46} (2010), {1--23}.


\bibitem{EKS}
\textsc{Erbar, M., Kuwada, K. and Sturm, K.T.}
On the equivalence of the entropic curvature-dimension condition and Bochner's inequality on metric measure spaces,
Preprint arXiv:1303.4382(2013).

\bibitem{FOT}
\textsc{Fukushima, M.,  Oshima, Y. and Takeda, M.},
\textit{Dirichlet forms and symmetric Markov processes}. De Gruyter 2011.

%\bibitem{Ket}
%\textsc{Ketterer, C.},
%{Ricci curvature bounds for warped products},
%   {Preprint  arXiv:1209.1325}.




\bibitem{Kuw}
\textsc{Kuwada, K.},
Duality on gradient estimates and {W}asserstein controls,
J.  Funct. Anal. {\bf 258} (2010),
3758 - 3774.


\bibitem{Kuw2}
\textsc{Kuwada, K.},
{Space-time {W}asserstein control and {B}akry-{L}edoux type gradient
  estimates},
 {Preprint} 2013.


%\bibitem{Led}
%\textsc{Ledoux, M.},

%\bibitem{Led2}
%\textsc{Ledoux, M.},
%\textit{The geometry of Markov diffusion generators}.
%ETHZ, Institute for Mathematical Research, 1998.

\bibitem{LV}
\textsc{Lott, J. and Villani, C.}
Ricci curvature for metric-measure spaces via optimal transport,  Annals of Math. {\bf 169} (2009),  903-991.


\bibitem{OhSt1}
\textsc{Ohta, S.-I. and Sturm, K.T.},
{Heat flow on {F}insler manifolds},
{Comm. Pure Appl. Math.}
{\bf 62}
(2009),
    {1386--1433}.


  \bibitem{OhSt2}
\textsc{Ohta, S.-I. and Sturm, K.T.},
{Bochner--{W}eitzenb\"ock formula and {L}i-{Y}au esitmate on {F}insler manifolds},
Adv. Math {\bf 252} (2014), 429 - 448.


 \bibitem{RaSt}
\textsc{Rajala, T. and Sturm, K.T.},
    {Non-branching geodesics and optimal maps in strong {CD}$({K},{\infty})$-spaces },
{Preprint  arXiv:1207.6754}, to appear in Calc. Var. PDEs.

\bibitem{ReSt}
\textsc{Renesse, M.-K. von and Sturm, K.T.},
{Transport inequalities, gradient estimates, entropy, and
              {R}icci curvature},
{Comm. Pure Appl. Math.}
{\bf 58}
(2005), {923--940}.

\bibitem{RevYor}
\textsc{Revuz,D. and Yor, M.},
\textit{Continuous martingales and Brownian motion}. Springer 1999.

\bibitem{Sav}
\textsc{Savar{\'e}, G.},
Self-improvement of the Bakry-�mery condition and Wasserstein contraction of the heat flow in RCD(K,$\infty$) metric measure spaces,
 to appear in Disc. Cont. Dyn. Sist. A.
(2013) 1-23

\bibitem{StoVoi}
\textsc{Stollmann, P. and  Voigt, J.},
Perturbation of Dirichlet forms by measures,
 Potential Analysis {\bf 5} (1996), 109 - 138.

\bibitem{Stu1}
\textsc{Sturm, K.T.},
    {On the geometry of metric measure spaces. {I}},
  {Acta Math.} {\bf 196}
    {(2006)}, {65--131}.

		
\bibitem{Stu2}
\textsc{Sturm, K.T.},
    {On the geometry of metric measure spaces. {II}},
  {Acta Math.} {\bf 196}
    {(2006)}, {133--177}.
		
		
\bibitem{Stu3}
\textsc{Sturm, K.T.},
Ricci Tensor for Diffusion Operators and Curvature-Dimension Inequalities under Conformal Transformations and Time Changes,
Preprint Arxiv, 2014.


\bibitem{Vi2}
\textsc{Villani, C.} \textit{Optimal transport, old and new}. Springer 2009.

\bibitem{Wa}
\textsc{Wang, Feng-Yu},
 On estimation of the logarithmic {S}obolev constant and gradient
estimates of heat semigroups,
Probab. Theory Related Fields {\bf 108} (1997), 87 - 101.

\end{thebibliography}
\end{document}